\documentclass[12pt]{amsart}
\usepackage[margin=1.2in]{geometry}

\title{CUR Decompositions, Approximations, and Perturbations}

\usepackage{hyperref}

\usepackage{hyperref}
\usepackage{tablefootnote}
\usepackage{graphicx}
\usepackage{subcaption}
\captionsetup[subfigure]{labelfont=rm}
\usepackage{amsthm}
\usepackage{dsfont}
\usepackage{amssymb}
\usepackage{enumerate}
\usepackage{graphicx}
\usepackage{float}
\usepackage{bbm}
\usepackage{amsmath}
\usepackage{comment}
\usepackage{hyperref}
\usepackage{listings}
\usepackage{color}
\usepackage{ulem}
\usepackage[dvipsnames]{xcolor}
\usepackage[lined,boxed,commentsnumbered]{algorithm2e}
\allowdisplaybreaks

\hypersetup{
	colorlinks,
	citecolor=blue,
	filecolor=blue,
	linkcolor=blue,
	urlcolor=blue,
	hyperfootnotes=false
}

\newtheorem{theorem}{Theorem}[section]
\newtheorem{proposition}[theorem]{Proposition}
\newtheorem{lemma}[theorem]{Lemma}

\newtheorem{corollary}[theorem]{Corollary}

\newtheorem{thmnum}{Theorem}

\newtheorem{cornum}{Corollary}

\theoremstyle{definition}
\newtheorem{example}[theorem]{Example}

\newtheorem{problem}{Problem}
\newtheorem{experiment}{Experiment}
\newtheorem{remark}[theorem]{Remark}


\newcommand{\R}{\mathbb{R}}

\newcommand{\K}{\mathbb{K}}

\newcommand{\E}{\mathbb{E}}
\newcommand{\Prob}{\mathbb{P}}

\newcommand{\sspan}{\textnormal{span}}



\newcommand{\rank}{{\rm rank\,}}

\newcommand{\argmin}{\text{argmin}}

\newcommand{\eps}{\varepsilon}

\begin{document}

\author{Keaton Hamm}
\address{Department of Mathematics, University of Arizona, Tucson, AZ 85719 USA}
\email{hamm@math.arizona.edu}

\author{Longxiu Huang}
\address{Department of Mathematics, Vanderbilt University, Nashville, TN 37240 USA}
\email{longxiu.huang@vanderbilt.edu}

\keywords{CUR Decomposition, Low Rank Matrix Approximation, Dimensionality Reduction, Nystr\"{o}m Method, Matrix Perturbation}
\subjclass[2010]{15A23,65F30,68P99,68W20}


\begin{abstract}

This article discusses a useful tool in dimensionality reduction and low-rank matrix approximation called the CUR decomposition.  Various viewpoints of this method in the literature are synergized and are compared and contrasted; included in this is a new characterization of exact CUR decompositions.  A novel perturbation analysis is performed on CUR approximations of noisy versions of low-rank matrices, which compares them with the putative CUR decomposition of the underlying low-rank part.  Additionally, we give new column and row sampling results which allow one to conclude that a CUR decomposition of a low-rank matrix is attained with high probability.  We then illustrate the stability of these sampling methods under the perturbations studied before, and provide numerical illustrations of the methods and bounds discussed.
\end{abstract}

\maketitle


\section{Introduction}

In many data analysis applications, two key tools are dimensionality reduction and compression, in which data obtained as vectors in a high dimensional Euclidean space are approximated in a basis or frame which spans a much lower dimensional space than the ambient space of the data (reduction) or a sketch of the total data matrix is made and stored in memory (compression).  Without such steps as a preconditioner to further analysis, many problems would be intractable.  However, one must balance the approximation method with the demand that any results obtained from the approximate versions of the data be readily interpreted by domain experts. This task can be challenging, and many well-known methods (for instance PCA) allow for great approximation and compression of the data, but at the cost of inhibiting interpretation of the results using the underlying physics or application.

One way around this difficulty is to attempt to utilize the \textit{self-expressiveness} of the data, which is the notion that oftentimes data is better represented in terms of linear combinations of other data points rather than in some abstract basis.  In many applications data is self-expressive, and methods based on this assumption achieve rather good results in various machine learning tasks (as a particular example, we refer the reader to the Sparse Subspace Clustering algorithm of Elhamifar and Vidal \cite{SSC}).  The question then is: how may one use self-expressiveness to achieve dimensionality reduction? Mahoney and Drineas \cite{DMPNAS} argue for the representation of a given data matrix in terms of \textit{actual columns and rows} of the matrix itself.  The idea is that rather than do something like PCA to transform the data into an eigenspace representation, one attempts to choose the most representative columns and rows which capture the essential information of the initial matrix.  Thus enters the \textit{CUR Decomposition}.

There are two distinct starting points in most of the literature revolving around the CUR decomposition (also called (pseudo)skeleton approximations \cite{DemanetWu,Goreinov}).  The first is as an exact matrix \textit{decomposition}, or \textit{factorization}: given an arbitrary, and possibly complicated matrix $A$, one may desire to decompose $A$ into the product of 2 or more factors, each of which is ``easier" to understand, store, or compute with than $A$ itself. The exact decomposition (see Theorem \ref{THM:CUR} for a formal statement) says that $A=CU^\dagger R$ if $\rank(U)=\rank(A)$, where $C$ and $R$ are column and row submatrices of $A$, respectively, and $U$ is their intersection.   The second starting point is to find a \textit{low-rank approximation} to a given matrix. This is along the lines of the work of Drineas, Kannan, Mahoney, and others \cite{DKMIII,DM05,DMM08}, and stems from the considerations of interpretability above.  Additionally, this is the typical vantage point of much of randomized linear algebra \cite{tropp}.  In this setting, the CUR approximation of a matrix is typically given by $A\approx CC^\dagger AR^\dagger R$, where $CC^\dagger$ and $R^\dagger R$ are orthogonal projections onto the subspaces spanned by given columns and rows, respectively.

These perspectives are not typically addressed in the literature together (indeed, the first perspective is taken but rarely).  The first part of this article (Section \ref{SEC:Viewpoints}) is an exposition of the two CUR viewpoints presented above, and highlights their similarities and differences; in doing so we give some new characterizations for exact CUR decompositions.  We also address two issues of import to CUR:  first, we undertake a novel perturbation analysis for CUR approximations of a flavor different than that in the existing literature.  Second, we give guarantees on random column and row sampling procedures which ensure that an exact CUR decomposition of a low-rank matrix is obtained with high probability, and then prove stability of this result under perturbations of the sampling probabilities used.  We then combine these two considerations to give guarantees on when sampling columns and rows of a noisy low-rank matrix gives an exact CUR decomposition for the low-rank part.

\section{Main Results}

The main theoretical results of this article are as follows (informally stated for now, but we point out the full statements later on): first in comparing CUR decompositions to CUR approximations, we find that in the exact case they are the same, but moreover, we derive several new equivalent conditions to obtaining an exact CUR decomposition.

\subsection{Equivalences for Exact CUR Decompositions}
\begin{thmnum}[Theorem  \ref{THM:Characterization} and Proposition \ref{PROP:Udagger}]
Let $C$ and $R$ be column and row submatrices of $A\in\K^{m\times n}$ (possibly with repeated columns/rows), and let $U$ be their intersection.  Then the following are equivalent:
\begin{enumerate}[(i)]
    \item $\rank(U)=\rank(A)$
    \item $A=CU^\dagger R$
    \item $A = CC^\dagger AR^\dagger R$
    \item $A^\dagger = R^\dagger UC^\dagger$
    \item $\rank(C)=\rank(R)=\rank(A)$.
\end{enumerate}
Moreover, if any of the equivalent conditions above hold, then $U^\dagger = C^\dagger AR^\dagger$.
\end{thmnum}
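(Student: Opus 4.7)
The plan is to establish a short cycle (i) $\Rightarrow$ (ii) $\Rightarrow$ (iii) $\Rightarrow$ (v) $\Rightarrow$ (i), augmented with a separate equivalence (i) $\Leftrightarrow$ (iv), and then to read off the moreover clause from (ii). The organizing device throughout will be a full-rank factorization $A = B_1 B_2$ with $B_1 \in \K^{m\times k}$ of full column rank and $B_2 \in \K^{k\times n}$ of full row rank, where $k = \rank(A)$. Restricting to the column indices $J$ and row indices $I$ defining $C, R, U$ yields $C = B_1 (B_2|^J)$, $R = (B_1|_I) B_2$, and $U = (B_1|_I)(B_2|^J)$, from which $\rank(U) \le \min(\rank(C), \rank(R)) \le \rank(A)$ is automatic.

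For (v) $\Rightarrow$ (i), the hypothesis $\rank(C) = \rank(R) = k$ together with $B_1, B_2$ being full rank forces $B_2|^J$ and $B_1|_I$ themselves to have rank $k$; then $U = (B_1|_I)(B_2|^J)$ is itself a rank factorization, so $\rank(U) = k$. The same observation gives (i) $\Rightarrow$ (ii): the product pseudoinverse rule yields $U^\dagger = (B_2|^J)^\dagger (B_1|_I)^\dagger$, after which $CU^\dagger R$ telescopes to $B_1 \cdot I_k \cdot I_k \cdot B_2 = A$. The step (ii) $\Rightarrow$ (iii) is a one-line substitution exploiting $CC^\dagger C = C$ and $RR^\dagger R = R$, and (iii) $\Rightarrow$ (v) follows because $\mathrm{col}(A) \subseteq \mathrm{col}(C)$ and $\mathrm{row}(A) \subseteq \mathrm{row}(R)$ are then immediate, paired with the trivial reverse containments coming from $C, R$ being submatrices of $A$.

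For (i) $\Leftrightarrow$ (iv), the reverse direction is a one-step rank bound: $\rank(A) = \rank(A^\dagger) = \rank(R^\dagger U C^\dagger) \le \rank(U)$, combined with $\rank(U) \le \rank(A)$. The forward direction is where I expect the main obstacle, and it hinges on the two projection identities $U^\dagger U = C^\dagger C$ and $U U^\dagger = R R^\dagger$. These hold because under (i), $\mathrm{row}(U) \subseteq \mathrm{row}(C)$ with both subspaces $k$-dimensional, so the orthogonal projections onto that common subspace must coincide; the column-space version is symmetric. With these identities in hand, checking the four Moore--Penrose axioms for $X := R^\dagger U C^\dagger$ is routine: $AX = C U^\dagger U C^\dagger = C C^\dagger$ is Hermitian, $XA = R^\dagger R$ analogously, and $AXA = A$, $XAX = X$ collapse by the same substitutions together with $U^\dagger U U^\dagger = U^\dagger$.

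The moreover clause then falls out of (ii): left-multiplying $A = CU^\dagger R$ by $C^\dagger$ and right-multiplying by $R^\dagger$ gives
$C^\dagger A R^\dagger = (C^\dagger C)\, U^\dagger\, (R R^\dagger) = (U^\dagger U)\, U^\dagger\, (U U^\dagger) = U^\dagger$,
using $U^\dagger U U^\dagger = U^\dagger$ twice. In sum, the two substantive ingredients to pin down are the rank-factorization computation that extracts the correct ranks of $B_1|_I$ and $B_2|^J$, and the projection identity $U^\dagger U = C^\dagger C$ together with its column-space counterpart; once these are available, both (iv) and the moreover identity dissolve into routine pseudoinverse manipulations, so I expect the projection identities to be the technical crux.
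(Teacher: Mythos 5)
Your proof is correct, and while its overall skeleton (a cycle through (i)--(v) plus the projection identities $U^\dagger U = C^\dagger C$ and $UU^\dagger = RR^\dagger$, which are exactly the paper's Lemma \ref{LEM:Projections}) mirrors the paper's, several individual steps take a genuinely different route. The paper proves (i) $\Rightarrow$ (ii) via a row-selection-matrix argument (Theorem \ref{THM:CUR}: any solution of $R = UX$ must solve $A = CX$), and it handles the rank bookkeeping for (v) $\Rightarrow$ (i) through the truncated SVD $A = W_k\Sigma_k V_k^*$ together with Sylvester's inequality (Corollary \ref{CR_RANK_U}); you instead run everything through an arbitrary full-rank factorization $A = B_1B_2$ and the product pseudoinverse rule $(FG)^\dagger = G^\dagger F^\dagger$ for $F$ of full column rank and $G$ of full row rank, which lets $CU^\dagger R$ telescope to $A$ in one line. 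For (i) $\Rightarrow$ (iv) the paper leans on the identity $U = RA^\dagger C$ (Proposition \ref{PROP:U=RAC}) and substitutes directly, whereas you verify the four Moore--Penrose axioms for $R^\dagger U C^\dagger$; both are valid, and your (iv) $\Rightarrow$ (i) via $\rank(A) = \rank(A^\dagger) = \rank(R^\dagger U C^\dagger) \leq \rank(U)$ is actually shorter than the paper's detour through $A = AR^\dagger UC^\dagger A$ and Corollary \ref{CR_RANK_U}. What the paper's route buys is the standalone identity $U = RA^\dagger C$, which it reuses elsewhere (e.g.\ in Proposition \ref{PROP:NormTerms}); what your route buys is independence from the SVD and a uniform mechanism (the rank factorization) driving all of the rank implications. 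Your derivation of the moreover clause coincides with the paper's Proposition \ref{PROP:Udagger}. One point worth making explicit if you write this up: before invoking $(FG)^\dagger = G^\dagger F^\dagger$ you should record that (i) forces $B_1|_I$ to have full column rank and $B_2|^J$ full row rank (which follows since $\rank(U) \leq \min\{\rank(B_1|_I),\rank(B_2|^J)\} \leq k$), as the product rule fails without those hypotheses.
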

This theorem reconciles the exact CUR decomposition $A=CU^\dagger R$ with what is often called the CUR decomposition, but which we call here a CUR approximation, given by $A\approx CC^\dagger AR^\dagger R$.  The latter is the best CUR approximation in a sense that is made precise in the sequel.  The moreover part (and the equivalence of \ref{ITEM:Adagger}) is interesting in its own right because it is not generally the case that $(AB)^\dagger = B^\dagger A^\dagger.$

\subsection{Perturbation analysis for CUR approximations}

Secondly, in many applications, matrices are well-approximated to be low rank (see \cite{LowRank}).  Consequently, we often observe $\tilde{A}=A+E$, where $A$ is low rank, but $E$ is some (hopefully) small perturbation.  Given a CUR \textit{approximation} of $\tilde{A}$, we consider what happens to the underlying CUR \textit{decomposition} of $A$.  Namely, if $\tilde{C}$ and $\tilde{R}$ are column and row submatrices of $\tilde{A}$, and $\tilde{U}$ is the matrix of their intersection, such that $C$, $R$, and $U$ are the corresponding submatrices of $A$, then we find the following (see Section \ref{SEC:Notation} for precise definitions).
\begin{thmnum}[Theorem \ref{THM:PB}]\label{THM:IntroA}
Let $\tilde{A} = A+E$ with $\rank(A)=k$.  For any unitarily invariant, submultiplicative, normalized, uniformly generated matrix norm $\|\cdot\|$, for sufficiently small $\|E\|$,
\[\|A-\tilde{C}\tilde{U}_k^\dagger\tilde{R}\| \leq \|A-CU^\dagger R\| + O(\|E\|)+O(\|A^\dagger\|\|E\|^2). \]
\end{thmnum}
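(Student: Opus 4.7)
The plan is to split the error via the triangle inequality and then bound the resulting discrepancy by expanding $\tilde{C}\tilde{U}_k^\dagger\tilde{R}$ around $CU^\dagger R$. Writing
$$\|A - \tilde{C}\tilde{U}_k^\dagger \tilde{R}\| \leq \|A - CU^\dagger R\| + \|CU^\dagger R - \tilde{C}\tilde{U}_k^\dagger \tilde{R}\|,$$
the first summand is already the first term on the right-hand side of the bound we want, so the whole game is to show that the second summand is $O(\|E\|) + O(\|A^\dagger\|\|E\|^2)$.

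To that end, set $E_C = \tilde{C} - C$, $E_R = \tilde{R} - R$, $E_U = \tilde{U} - U$ (each a submatrix of $E$, hence of norm at most $\|E\|$ by the uniformly generated hypothesis), and $\Delta = \tilde{U}_k^\dagger - U^\dagger$. First I would expand
\begin{align*}
\tilde{C}\tilde{U}_k^\dagger \tilde{R} - CU^\dagger R &= C U^\dagger E_R + E_C U^\dagger R + C \Delta R \\
&\quad + E_C U^\dagger E_R + C \Delta E_R + E_C \Delta R + E_C \Delta E_R .
\end{align*}
The first line collects terms that are ``linear'' in the error data, while the second line collects terms of order $\|E\|^2$ or higher. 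By submultiplicativity and the uniformly generated property, the linear terms contribute $O(\|E\|)$ with implicit constants depending on $\|A\|$ and $\|U^\dagger\|$, provided $\|\Delta\| = O(\|E\|)$; the higher-order terms will give the $\|E\|^2$ contribution.

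The main technical step is a pseudoinverse perturbation estimate for $\Delta$. Because $\rank(U)\le \rank(A)=k$ and $\tilde{U}_k$ is the best rank-$k$ approximation to $\tilde{U}$, I get $\|\tilde{U}_k - U\| \le \|\tilde{U}_k - \tilde{U}\| + \|\tilde{U} - U\| \le 2\|E_U\| \le 2\|E\|$. For $\|E\|$ small enough that the $k$th singular value of $\tilde{U}$ stays bounded away from $0$ (which follows from Weyl once $\|E\| < 1/\|U^\dagger\|$ or similar), one has $\rank(\tilde{U}_k)=\rank(U)$, and a Wedin-type bound yields
$$\|\Delta\| \leq c\,\|U^\dagger\|\,\|\tilde{U}_k^\dagger\|\,\|\tilde{U}_k - U\| = O(\|U^\dagger\|^2 \|E\|).$$
Finally, invoking the characterization theorem quoted above, the identity $A^\dagger = R^\dagger U C^\dagger$ (or a near-variant, since we are in the ``close to exact CUR'' regime) lets me convert the $\|U^\dagger\|$ factors arising in the second-line terms into $\|A^\dagger\|$, so that the quadratic remainder collapses to $O(\|A^\dagger\|\|E\|^2)$, while the linear terms combine into $O(\|E\|)$.

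The main obstacle I anticipate is the rank-stability argument required to justify Wedin's bound: one has to translate ``$\|E\|$ sufficiently small'' into an explicit threshold depending on $\sigma_k(U)$ below which both $\rank(\tilde{U}_k)=\rank(U)$ and $\|\tilde{U}_k^\dagger\|$ remains comparable to $\|U^\dagger\|$. A secondary but more delicate point is the bookkeeping of which terms are truly linear versus quadratic in $\|E\|$, in order to land exactly the form $O(\|E\|) + O(\|A^\dagger\|\|E\|^2)$ rather than a coarser single-term bound; once these are handled, the remaining calculations are mechanical applications of submultiplicativity and unitary invariance.
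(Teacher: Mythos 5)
Your skeleton matches the paper's: triangle inequality, then an expansion of $\tilde{C}\tilde{U}_k^\dagger\tilde{R}-CU^\dagger R$ into linear and higher-order pieces. The terms $CU^\dagger E_R$, $E_CU^\dagger R$ and $E_CU^\dagger E_R$ are handled essentially as in Propositions \ref{PROP:Perturbation}--\ref{PROP:PB}, with $\|CU^\dagger\|=\|W_{k,I}^\dagger\|$ and $\|U^\dagger R\|=\|V_{k,J}^\dagger\|$ coming from Proposition \ref{PROP:NormTerms}. The divergence --- and the gap --- is in the central term $C\Delta R$ with $\Delta=\tilde{U}_k^\dagger-U^\dagger$. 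Bounding it by $\|C\|\,\|\Delta\|\,\|R\|$ with the Wedin/Stewart estimate $\|\Delta\|\lesssim\|U^\dagger\|\,\|\tilde{U}_k^\dagger\|\,\|\tilde U_k-U\|=O(\|U^\dagger\|^2\|E\|)$ produces a \emph{first-order} contribution whose constant is of size $\|C\|\,\|R\|\,\|U^\dagger\|^2\sim\kappa(A)^2$ (times submatrix factors), and the mixed quadratic terms $E_C\Delta R$, $C\Delta E_R$ come out as $O(\|A^\dagger\|^2\|A\|\,\|E\|^2)$. Neither matches the stated structure, in which the first-order constants are $\|W_{k,I}^\dagger\|\,\|V_{k,J}^\dagger\|$ (free of $\|A\|$, $\|A^\dagger\|$, and $\kappa(A)$) and $\|A^\dagger\|$ enters only \emph{linearly} and only at second order. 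Your proposed repair --- using $A^\dagger=R^\dagger UC^\dagger$ to convert $\|U^\dagger\|$ into $\|A^\dagger\|$ --- goes the wrong way: that identity bounds $\|A^\dagger\|$ from above, not $\|U^\dagger\|$; the usable identity is $U^\dagger=C^\dagger AR^\dagger$, equivalently $U^\dagger=(V_{k,J}^*)^\dagger\Sigma_k^{-1}W_{k,I}^\dagger$, which gives $\|U^\dagger\|\lesssim\|W_{k,I}^\dagger\|\,\|V_{k,J}^\dagger\|\,\|A^\dagger\|$ and hence converts $\|U^\dagger\|^2$ into $\|A^\dagger\|^2$, still not $\|A^\dagger\|$.

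The missing idea is that the paper never invokes a perturbation bound for the \emph{difference} of pseudoinverses. Instead it writes $C=CU^\dagger U$ and $R=UU^\dagger R$ (Lemma \ref{LEM:Projections}), so that
\[
C\bigl(\tilde{U}_k^\dagger-U^\dagger\bigr)R \;=\; CU^\dagger\bigl(U\tilde U_k^\dagger U-U\bigr)U^\dagger R,
\]
and then substitutes $U=\tilde U_k+(U-\tilde U_k)$ inside. Because $\tilde U_k\tilde U_k^\dagger$ and $\tilde U_k^\dagger\tilde U_k$ are orthogonal projections, three of the resulting four terms are bounded by $\|CU^\dagger\|\,\|U^\dagger R\|\,\|U-\tilde U_k\|$ and the fourth by $\|CU^\dagger\|\,\|U^\dagger R\|\,\|\tilde U_k^\dagger\|\,\|U-\tilde U_k\|^2$, with $\|U-\tilde U_k\|\le2\|E_{I,J}\|$ by Mirsky. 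Stewart's theorem is used only to control the \emph{size} $\|\tilde U_k^\dagger\|\le\|U^\dagger\|/(1-2\mu\|U^\dagger\|_2\|E_{I,J}\|)$ under the threshold $\sigma_k(U)>2\mu\|E_{I,J}\|$ (your rank-stability concern is resolved exactly there). Sandwiching $\Delta$ between $CU^\dagger U$ and $UU^\dagger R$ rather than between $C$ and $R$ is what removes the condition-number dependence at first order and leaves a single factor of $\|A^\dagger\|$ at second order; without it your argument proves a perturbation bound, but a strictly coarser one than Theorem \ref{THM:PB} claims.
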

This is the first perturbation analysis of this form which compares CUR approximations of noisy versions of low-rank matrices to the underlying CUR decomposition of their low-rank part.  Indeed, if columns and rows are chosen such that $A=CU^\dagger R$ (see Theorem \ref{THM:IntroB}) then Theorem \ref{THM:IntroA} says that the CUR approximation of $A$ based on noisy columns and rows is bounded in norm by $O(\|E\| +\|A^\dagger\|\|E\|^2)$.  Specific and deterministic bounds for the big-O constant are given in the sequel.  One motivation for this analysis is the main theorem of \cite{AHKS}, which shows that the CUR decomposition of data matrices whose columns come from unions of subspaces can be used to solve the \textit{subspace clustering problem}.  The algorithm therein proposes a solution in the case that the data is noisy; however, there is no theoretical guarantee which guarantees success in the presence of noise.  Consequently, the perturbation bounds here may prove useful in that context.

\subsection{Column and Row Sampling and Stability}

Finally, we discuss the problem of how to select columns and rows to obtain an exact CUR decomposition of a low rank matrix, and prove that this procedure is stable under small perturbations. The initial result is obtained from some established results of Rudelson and Vershynin \cite{Rudelson_2007}.
\begin{thmnum}[Theorem \ref{THM:ColRowChnM}]\label{THM:IntroB}
If $A$ has rank $k$, then sampling $O(k\log k)$ columns and rows of $A$ independently with replacement according to column and row lengths, respectively, implies that $A = CU^\dagger R$ with high probability.
\end{thmnum}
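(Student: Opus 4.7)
The plan is to reduce the conclusion to a pair of rank-preservation statements via the characterization in Theorem~A, and then invoke the Rudelson--Vershynin concentration bound from \cite{Rudelson_2007} to control the rank loss incurred by random length-squared sampling. By the equivalence (i)$\Leftrightarrow$(ii)$\Leftrightarrow$(v) of Theorem~A, the identity $A = CU^\dagger R$ holds precisely when $\rank(C)=\rank(R)=\rank(A)=k$. Hence it suffices to prove separately that length-squared column sampling preserves column rank, and (by applying the same argument to $A^*$) that length-squared row sampling preserves row rank; a union bound then yields the full statement.

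For the column step, I would introduce the standard importance-sampling random vector $Y = A_{:,J}/\sqrt{p_J}$, where the index $J$ is drawn from the distribution $p_j = \|A_{:,j}\|^2/\|A\|_F^2$. A direct computation gives $\E[YY^*] = AA^*$ and $\|Y\|^2 = \|A\|_F^2$ almost surely. If $Y_1,\dots,Y_s$ are iid copies of $Y$, corresponding to $s$ sampled and rescaled columns of $A$, then $\tilde{C} = s^{-1/2}[Y_1\;\cdots\;Y_s]$ satisfies $\tilde{C}\tilde{C}^* = s^{-1}\sum_t Y_t Y_t^*$, an unbiased estimator of $AA^*$.

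Next, the Rudelson--Vershynin inequality produces an estimate of the form
\[
\E\bigl\|\tilde{C}\tilde{C}^* - AA^*\bigr\| \;\lesssim\; \sqrt{\tfrac{\log s}{s}}\,\|A\|_F\,\|A\|,
\]
which is promoted to a high-probability bound via Markov. Whenever the left-hand side lies strictly below $\sigma_k(A)^2 = \sigma_k(AA^*)$, Weyl's inequality forces $\sigma_k(\tilde{C}\tilde{C}^*) > 0$, so $\rank(\tilde{C}) \geq k$. Since $\tilde{C}$ consists of nonzero scalar multiples of columns of a rank-$k$ matrix, its rank is exactly $k$; because rescaling does not alter rank, the unscaled submatrix $C$ also satisfies $\rank(C)=k$.

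The main technical obstacle is reconciling this with the advertised sample complexity $s = O(k\log k)$. Forcing the right-hand side of the R--V bound below $\sigma_k(A)^2$ rearranges to $s/\log s \;\gtrsim\; \|A\|_F^2\,\|A\|^2/\sigma_k(A)^4$, and the rank-$k$ inequality $\|A\|_F^2 \leq k\|A\|^2$ reduces this to $s/\log s \gtrsim k\,(\|A\|/\sigma_k(A))^4$. The dependence on the conditioning of the nonzero spectrum must therefore be absorbed into the implicit constant of the $O(\cdot)$; the clean scaling \emph{in} $k$ is the honest content of the statement. Once this reading is fixed, the same chain of reasoning applied to $A^*$ handles the row case, and a final union bound across the two rank-preservation events completes the proof.
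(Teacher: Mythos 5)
Your proposal is correct and follows essentially the same route as the paper: reduce $A = CU^\dagger R$ to the rank-preservation statements $\rank(C)=\rank(R)=k$ via the characterization theorem, apply the Rudelson--Vershynin length-squared sampling bound to show $\hat{C}\hat{C}^*$ (resp.\ $\hat{R}^*\hat{R}$) approximates $AA^*$ (resp.\ $A^*A$) within $\sigma_k(A)^2$, invoke Weyl to preserve the $k$-th singular value, and combine the two independent events; your accounting of how the conditioning factor $(\|A\|_2/\sigma_k(A))^4$ is absorbed into the constant matches the paper's assumption $\eps<\sigma_k(A)/\|A\|_2$. The only cosmetic difference is that you promote the expectation bound via Markov, whereas the paper uses the exponential tail inequality from \cite{Rudelson_2007} directly, which gives a sharper failure probability of the form $2\exp(-c/\delta)$.
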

Our method of proving Theorem \ref{THM:IntroB} allows for low sampling complexity (at least $k$ rows and columns must be sampled to achieve a valid CUR decomposition) and also allows columns and rows to be sampled independently of each other.  Moreover, our proof technique allows us to demonstrate stability of this sampling method in the following sense.

\begin{thmnum}[Theorem \ref{THM:StableCUR}]\label{THM:IntroC}
If $A$ has rank $k$, and $p_i,q_i$ are probability distributions determined by the column and row lengths of $A$, respectively, then for any probability distributions which satisfy $\tilde{p}_i\geq\alpha_ip_i$, $\tilde{q}_i\geq\beta_iq_i$ for some $\alpha_i,\beta_i>0$, sampling $O(k\log k)$ columns and rows of $A$ independently with replacement according to $\tilde{p}$ and $\tilde{q}$, respectively, implies that $A=CU^\dagger R$ with high probability.
\end{thmnum}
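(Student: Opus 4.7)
The plan is to reduce to Theorem \ref{THM:ColRowChnM} by means of an importance-sampling reweighting, and then to invoke the characterization result (Theorem \ref{THM:Characterization}) to convert rank preservation of $C$ and $R$ into the identity $A=CU^\dagger R$. The argument underlying Theorem \ref{THM:ColRowChnM} builds an unbiased Monte Carlo estimator of $AA^*$ from columns drawn according to the length distribution $p$ and then appeals to Rudelson--Vershynin \cite{Rudelson_2007} to establish spectral concentration of this estimator, forcing $\rank(C)=\rank(A)=k$ with high probability. I would recycle the same skeleton, but with the sampling distribution replaced by $\tilde p$ and the column weights rescaled so that the estimator remains unbiased.

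Concretely, for $J_1,\dots,J_s$ drawn i.i.d.\ according to $\tilde p$, set $X_i := A^{(J_i)}/\sqrt{\tilde p_{J_i}}$. Then
\[
\E[X_i X_i^*] \;=\; \sum_j \tilde p_j \cdot \frac{A^{(j)}(A^{(j)})^*}{\tilde p_j} \;=\; AA^*,
\]
so the reweighted empirical sum $\frac{1}{s}\sum_i X_i X_i^*$ is still unbiased for $AA^*$, and its column span coincides with that of the raw submatrix $C$. The hypothesis $\tilde p_j \geq \alpha_j p_j = \alpha_j \|A^{(j)}\|_2^2/\|A\|_F^2$ produces the uniform almost-sure bound
\[
\|X_i\|_2^2 \;\leq\; \frac{\|A^{(J_i)}\|_2^2}{\tilde p_{J_i}} \;\leq\; \frac{\|A\|_F^2}{\alpha_{J_i}} \;\leq\; \frac{\|A\|_F^2}{\alpha_\ast},
\]
where $\alpha_\ast := \min\{\alpha_j : p_j > 0\}$. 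Plugging this into the Rudelson--Vershynin inequality exactly as in the proof of Theorem \ref{THM:ColRowChnM} shows that $s = O(\alpha_\ast^{-1} k \log k)$ draws suffice to guarantee $\|\tfrac{1}{s}\sum_i X_i X_i^* - AA^*\| \leq \tfrac{1}{2}\|AA^*\|$ with high probability, which forces $\rank(C)=k$. Running the symmetric argument on rows, with $\beta_\ast := \min\{\beta_i : q_i > 0\}$, yields $\rank(R)=k$ with high probability.

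A union bound over the two (independent) sampling events, combined with the equivalence $\rank(C)=\rank(R)=\rank(A) \iff A = CU^\dagger R$ furnished by Theorem \ref{THM:Characterization}, completes the argument. The step that I expect to demand the most care is the sample-complexity bookkeeping: the uniform bound above inflates the number of samples by the worst-case multiplier $1/\alpha_\ast$. One could hope to replace this by the gentler quantity $\sum_j p_j/\alpha_j$ by leaning on the actual variance of the reweighted estimator rather than its sup-norm, and pinning down the sharp dependence on the $\alpha_j$'s and $\beta_i$'s is the main technical subtlety of the proof.
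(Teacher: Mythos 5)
Your proposal is correct and follows essentially the same route as the paper: the importance-sampling reweighting $X_i = A^{(J_i)}/\sqrt{\tilde p_{J_i}}$, the unbiasedness of the empirical second-moment estimator, and the sup-norm bound $\|X_i\|_2^2 \leq \|A\|_F^2/\alpha_\ast$ fed into the Rudelson--Vershynin concentration inequality are exactly the content of the paper's Theorem \ref{THM:Stability} (proved in Appendix \ref{SEC:AppendixProofStability}), after which both arguments conclude via $\rank(C)=\rank(R)=k$ and the exact-CUR characterization. The only cosmetic difference is bookkeeping: the paper absorbs the $\alpha$-dependence into the parameter constraint $\eps^2\sqrt{\delta}/2 < \alpha$ and the success probability $1-2\exp(-c\alpha^2/\delta)$ rather than inflating the sample count by $1/\alpha_\ast$.
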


As a corollary, we find that uniform sampling of rows and columns yields an exact CUR decomposition with high probability; this result is new: the only previous results for uniform sampling were given by Chiu and Demanet under coherence assumptions on the matrix \cite{DemanetWu}.  
Additionally, we may combine Theorems \ref{THM:IntroC} and \ref{THM:IntroB}. Suppose that $\tilde{A} = A+E$ where $A$ has rank $k$, and we sample columns and rows of $\tilde{A}$ to form $\tilde{C},\tilde{U},\tilde{R}$.  These may be written as $\tilde{C} = C+E(:,J)$, for instance, where $C,U,$ and $R$ are the corresponding column, row, and intersections submatrices of the low rank matrix $A$.  It is natural to ask what the likelihood is that sampling from the noisy version of $A$ yields a CUR decomposition of $A$ itself. 

\begin{cornum}[Corollary \ref{COR:Uniform}]\label{COR:IntroA}
Suppose that $\tilde{A}=A+E$, with $A$ having rank $k$.  Then sampling $O(k\log k)$ columns and rows of $\tilde{A}$ uniformly with replacement yields $\tilde{C},\tilde{U},\tilde{R}$ such that $A=CU^\dagger R$ with high probability.
\end{cornum}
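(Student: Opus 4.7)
The plan is to exploit the fact that uniform sampling of column and row indices does not depend on the matrix entries. One may draw indices $J \subset [n]$ and $I \subset [m]$ uniformly and independently with replacement without ever inspecting $\tilde A$. These indices simultaneously determine the noisy submatrices $\tilde C = \tilde A(:,J)$, $\tilde R = \tilde A(I,:)$, $\tilde U = \tilde A(I,J)$ and the corresponding (unseen) submatrices $C = A(:,J)$, $R = A(I,:)$, $U = A(I,J)$ of the underlying low-rank matrix. Since the desired conclusion $A = CU^\dagger R$ is purely a statement about $A$ and its submatrices, it suffices to show that uniform sampling applied to $A$ itself produces an exact CUR decomposition with high probability.

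To establish this, I would invoke Theorem \ref{THM:IntroC} with $A$ in place of the sampled matrix. Let $p_i = \|A(:,i)\|_2^2/\|A\|_F^2$ and $q_j = \|A(j,:)\|_2^2/\|A\|_F^2$ be the length-squared distributions of $A$. The uniform distribution $\tilde p_i = 1/n$ on columns satisfies $\tilde p_i \geq \alpha_i p_i$ with $\alpha_i := \|A\|_F^2 / (n\|A(:,i)\|_2^2)$ on every nonzero column (zero columns may be discarded as they do not contribute to sampling), and symmetrically $\tilde q_j = 1/m \geq \beta_j q_j$ with $\beta_j := \|A\|_F^2 / (m\|A(j,:)\|_2^2)$. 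Theorem \ref{THM:IntroC} then guarantees that $O(k\log k)$ uniform samples suffice to yield $A = CU^\dagger R$ with high probability, which is precisely the conclusion of the corollary.

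The main obstacle is controlling the hidden constants in the $O(k\log k)$ bound of Theorem \ref{THM:IntroC} as functions of $\min_i \alpha_i$ and $\min_j \beta_j$, since both can be small when $A$ has highly non-uniform column or row norms — this is precisely the regime in which prior uniform-sampling results required coherence-type assumptions. The crux of the argument therefore lies in verifying, within the proof of Theorem \ref{THM:IntroC}, that the dependence on these worst-case constants is absorbed into the sampling complexity in a quantitatively acceptable way. Once this is confirmed, Corollary \ref{COR:IntroA} follows immediately from the reduction to the noiseless case established in the first paragraph, and one may additionally re-express the conclusion through any of the equivalences $(i)$--$(v)$ for the low-rank matrix $A$.
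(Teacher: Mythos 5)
Your proposal is correct and follows essentially the same route as the paper: observe that the uniform distribution dominates the row- and column-length distributions of $A$ up to per-index constants, and then invoke the stability theorem (Theorem \ref{THM:StableCUR}, i.e.\ Theorem \ref{THM:IntroC}); your opening remark that uniform index sampling is oblivious to the matrix, so sampling $\tilde{A}$ and sampling $A$ produce the same index sets, is left implicit in the paper but is exactly the right justification. The ``obstacle'' you flag is genuine but is resolved in the paper simply by folding $\alpha:=\min_i\alpha_i$ and $\beta:=\min_j\beta_j$ into the hypotheses on $\eps$ and $\delta$ (namely $\eps^2\sqrt{\delta}/2<\min\{\alpha,\beta\}$), so that the non-uniformity of the row and column norms shows up in the implied constant of the $O(k\log k)$ sampling complexity and in the failure probability $2\exp(-c\alpha^2/\delta)$ rather than as a separate coherence assumption.
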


Something more general than Corollary \ref{COR:IntroA} may be said: indeed if the noise is small compared to the matrix $A$, then sampling $\tilde{A}$ according to its row and column lengths yields the same conclusion that $A=CU^\dagger R$ above.  See Remark \ref{REM:Atilde} for more details.

\subsection{Layout}

The rest of the paper develops as follows: Section \ref{SEC:Notation} establishes the notation used throughout the sequel; Section \ref{SEC:Viewpoints} derives both the exact CUR decomposition and the CUR approximation, which are the primary viewpoints given in the literature.  These methods are compared and shown to be equivalent in the exact decomposition case, and their difference is illustrated in the approximation case.  We also give a history of the decomposition and its influences in Section \ref{SUBSEC:History}.  Continuing on, Section \ref{SEC:Perturbation} contains the novel perturbation analysis for CUR approximations of noisy observations of low rank matrices, and the proof of Theorem \ref{THM:IntroA} is given along with tools to estimate the error bounds.  Section \ref{SEC:ColumnSelection} provides theoretical guarantees for obtaining an exact CUR decomposition of a noiseless low rank matrix and proves Theorems \ref{THM:IntroB}, \ref{THM:IntroC}, and Corollary \ref{COR:IntroA}.  Additionally, this section contains a survey of the different ways of forming CUR approximations in the literature and the corresponding column and row sampling methods utilized.  Section \ref{SEC:Numerics} contains some numerical experiments to illustrate some of the phenomena and theoretical results established beforehand, and Section \ref{SEC:Rank} illustrates a simple rank-estimation algorithm derived from our analysis.  Finally, the Appendix holds the proof of one of the intermediary theorems in Section \ref{SEC:ColumnSelection}.

\section{Notations}\label{SEC:Notation}

Here, the symbol $\K$ will represent either the real or complex field, and we will denote by $[n]$, the set of integers $\{1,\dots,n\}$ for convenience.

In the sequel, we will often have occasion to speak of submatrices of a matrix $A\in\K^{m\times n}$ with respect to certain columns and rows.  For this, we will use the Matlab-friendly notation $A(I,:)$ to denote the $|I|\times n$ row submatrix of $A$ consisting only of those rows of $A$ indexed by $I\subset[m]$, and likewise $A(:,J)$ will denote the $m\times|J|$ column submatrix of $A$ consisting only of those columns of $A$ indexed by $J\subset[n]$.  Therefore, $A(I,J)$ will be the $|I|\times|J|$ submatrix of those entries $a_{ij}$ of $A$ for which $(i,j)\in I\times J$.

The Singular Value Decomposition (SVD) of a matrix $A$ will typically be denoted by $A=W_A\Sigma_AV_A^*$ with the use of $W$ being preferred to the typical usage of $U$ since the latter will stand for the middle matrix in the CUR decomposition.  The truncated SVD of order $k$ of a matrix $A$ will be denoted by $A_k=W_k\Sigma_kV_k^*$, where $W_k$ comprises the first $k$ left singular vectors, $\Sigma_k$ is a $k\times k$ matrix containing the largest $k$ singular values, and $V_k$ comprises the first $k$ right singular vectors.  We will always assume that the singular values are positioned in descending order, and label them $\sigma_1\geq\sigma_2\geq\dots\geq\sigma_r\geq0$. To specify the matrix involved, we may also write $\sigma_i(A)$ for the $i$--th singular value of $A$.

Given a matrix $A\in\K^{m\times n}$, its Moore--Penrose pseudoinverse will be denoted by $A^\dagger\in\K^{n\times m}$.  We recall for the reader that this pseudoinverse is unique and satisfies the following properties: (i) $AA^\dagger A = A$, (ii) $A^\dagger AA^\dagger = A^\dagger$, and (iii) $AA^\dagger$ and $A^\dagger A$ are Hermitian.  Additionally, given the SVD of $A$ as above we have a simple expression for its pseudoinverse as $A^\dagger=V_A\Sigma_A^\dagger W_A^*$, where $\Sigma^\dagger$ is the $n\times m$ matrix with diagonal entries $\frac{1}{\sigma_i(A)}$, $i=1,\dots,r=\rank(A)$. 

Our analysis will consider a variety of matrix norms.  Some of the most important are the spectral norm $\|A\|_2:=\sup\{\|Ax\|_2:x\in S_{\K^n}\}$, where $S_{\K^n}$ is the unit sphere of $\K^n$ (in the Euclidean norm).  It is a useful fact that $\|A\|_2 = \sigma_1(A)$.  The Frobenius norm is another common norm that will be used, and has an entrywise definition, but also can be represented using the singular values as well, to wit
\[ \|A\|_F := \left(\sum_{i,j}a_{i,j}^2\right)^\frac12 = \left(\sum_i \sigma_i(A)^2\right)^\frac12.\]

A slight (but standard) abuse of terminology will be used in that we will call a matrix norm $\|\cdot\|$ submultiplicative provided $\|AB\|\leq\|A\|\|B\|$ without referencing the fact that $A$ and $B$ may be of different sizes and hence the norms are on different spaces of matrices.  Thus any submultiplicative norm for us will be one which is well-defined on $\K^{m\times n}$ for any $m$ and $n$ and is compatible in the manner prescribed.  Common examples are the spectral and Frobenius norm above, any induced matrix norm, and any Schatten $p$--norm, $1\leq p\leq\infty$ (which are defined by the right-most term in the Frobenius norm expression above but with the $\ell_2$ norm of the singular values replaced with an $\ell_p$ norm). Additionally, we say a matrix norm is unitarily invariant if $\|UAV\| = \|A\|$ for any unitary matrices $U$ and $V$.  Schatten $p$--norms (including the Frobenius norm) and the spectral norm are unitarily invariant.

We will utilize the following definition of Stewart \cite{Stewart_1977}: a family of unitarily invariant norms from $\bigcup_{m,n=1}^\infty\K^{m\times n}\to\R$ is called \textit{normalized} if $\|x\|=\|x\|_2$ for any vector $x$ considered as a matrix, and \textit{uniformly generated} if $\|A\|$ can be written as $\phi(\sigma_1(A),\dots,\sigma_n(A),0,\dots)$ for some symmetric function $\phi$.  Evidently all Schatten $p$--norms satisfy these conditions, and we have $\|\cdot\|_2\leq\|\cdot\|$ for any such norm.  In the sequel, $\|\cdot\|$ will always denote a normalized, uniformly generated, submultiplicative, unitarily invariant matrix norm unless otherwise specified.

Finally, we will use $\mathcal{N}(A)$ and $\mathcal{R}(A)$ to denote the nullspace and range of $A$, respectively; the symbol $f\gtrsim g$ will mean that $f\geq cg$ for some universal constant $c$, and for vectors $x,y\in\K^n$, $x\otimes y = xy^*$.

\section{Two viewpoints on CUR}\label{SEC:Viewpoints}

There appear to be two distinct starting points in most of the literature revolving around the CUR decomposition.  The first is as an exact matrix \textit{decomposition}, or \textit{factorization}.   

The second starting point is to find a \textit{low-rank approximation} to a given matrix.  In this modern era of large-scale, high-dimensional data analysis, dimension reduction techniques are absolutely crucial to leveraging data to make accurate conclusions about the world around us.  Despite significant advances in computational power, the data that we collect today is rarely amenable to fast computations without some sort of dimension reduction beforehand.  Moreover, data often has an intrinsically low-dimensional structure, which may be elucidated by dimension reduction techniques.

In the rest of this section, we will illustrate the conclusion of a practitioner of CUR based on each launching point mentioned here, and discuss why each point of view is useful.  Then we show that in a certain case (i.e. when an exact decomposition is obtained) these vantage points provide the exact same answer (i.e. the decomposition and approximation are one and the same).  Finally, we also discuss how the two conclusions are distinct from each other in the case one desires to use CUR as a low-rank approximation.  For ease of reading, we will withhold citations and historical notes about the theorems presented in this section until Section \ref{SUBSEC:History}, and simply alert the reader here that most of the results are known in some fashion, but it is our aim to provide some context and comparison of them here.

\subsection{Exact Decomposition: $A = CU^\dagger R$}

For our first tale regarding CUR, we begin by asking the question: given a matrix $A\in\K^{m\times n}$ with rank $k<\min\{m,n\}$, can we decompose it into terms involving only some of its columns and some of its rows.  Particularly, if we choose $k$ columns of $A$ which span the column space of $A$ and $k$ rows which span the row space of $A$, then we should be able to stitch these linear maps together to get $A$ itself back.   The answer, as it turns out, is yes as the following theorem shows (the intuition of the previous statement will be demonstrated in a subsequent figure).

\begin{theorem}\label{THM:CUR}
Let $A\in\K^{m\times n}$ have rank $k$, and let $I\subset[m]$ and $J\subset[n]$ with $|I|=t\geq k$ and $|J|=s\geq k$.  Let $C=A(:,J)$, $R=A(I,:)$, and $U=A(I,J)$.  If $\rank(U)=\rank(A)$, then 
\[ A = CU^\dagger R.\]
\end{theorem}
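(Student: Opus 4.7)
The plan is to reduce everything to the two familiar projection identities $A = CC^\dagger A$ and $A = AR^\dagger R$, then exploit the fact that extracting rows commutes with left multiplication (and extracting columns commutes with right multiplication) to produce algebraic relations linking $C$, $R$, and $U$.

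First I would argue that $\rank(C) = \rank(R) = k$. Since $U$ is a row submatrix of $C$ and a column submatrix of $R$, we have $\rank(U) \le \rank(C) \le \rank(A) = k$ and similarly for $R$, so the hypothesis $\rank(U) = k$ forces equality throughout. Because $\mathcal{R}(C) \subseteq \mathcal{R}(A)$ and both subspaces have dimension $k$, they coincide, which yields $A = CC^\dagger A$ (since $CC^\dagger$ is the orthogonal projection onto $\mathcal{R}(C)$). By the symmetric argument applied to $A^*$, we also get $A = AR^\dagger R$.

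Next, I would restrict the identity $A = CC^\dagger A$ to the rows indexed by $I$. Extracting those rows amounts to left-multiplying by the corresponding selection matrix, so
\[
R = A(I,:) = (CC^\dagger A)(I,:) = C(I,:)\, C^\dagger A = U C^\dagger A.
\]
Likewise, restricting $A = AR^\dagger R$ to the columns indexed by $J$ gives $C = AR^\dagger U$. With these two relations in hand, the rest is a short computation using only the defining identity $UU^\dagger U = U$ of the pseudoinverse:
\[
CU^\dagger R = (AR^\dagger U)\, U^\dagger\, (UC^\dagger A) = AR^\dagger (UU^\dagger U) C^\dagger A = AR^\dagger (UC^\dagger A) = AR^\dagger R = A,
\]
where the last equality substitutes $R = UC^\dagger A$ back in and then invokes $A = AR^\dagger R$.

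The main obstacle, such as it is, lies in justifying the projection identities at the very start; everything downstream is purely algebraic. The key observation that makes the algebra trivial is that row/column selection is just multiplication by a $0/1$ matrix, so it commutes with multiplication on the opposite side — this is what converts the two projection formulas into the structural relations $R = UC^\dagger A$ and $C = AR^\dagger U$. Once those are in place, associativity and a single application of $UU^\dagger U = U$ finish the proof.
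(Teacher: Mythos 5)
Your proof is correct, and it takes a genuinely different route from the paper's. The paper argues via solvability of linear systems: it writes $A=CX$ for some $X$, shows that left-multiplying by the row-selection matrix $P_I$ turns this into the equivalent system $R=UX$ (equivalent precisely because the rank hypothesis guarantees no information is lost when discarding rows), and then verifies that $X=U^\dagger R$ solves the reduced system. Your argument instead extracts the explicit structural identities $R=UC^\dagger A$ and $C=AR^\dagger U$ by restricting the projection identities $A=CC^\dagger A$ and $A=AR^\dagger R$ to the rows in $I$ and the columns in $J$, and then finishes with a one-line computation using only $UU^\dagger U=U$. What your route buys: it is entirely constructive and avoids the somewhat delicate justification of the backward implication $P_IA=P_ICX\Rightarrow A=CX$; it also establishes, essentially for free, the implication $\rank(U)=\rank(A)\Rightarrow\rank(C)=\rank(R)=\rank(A)$ and the identities $AA^\dagger=CC^\dagger$, $A^\dagger A=R^\dagger R$ that the paper proves separately later (Lemma \ref{LEM:Projections}) and uses in Theorem \ref{THM:Characterization}. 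What the paper's route buys is the interpretation of $U^\dagger R$ as a least-information solution of $A=CX$, which matches the ``stitching linear maps together'' intuition it emphasizes. One small point worth making explicit in your write-up: the identity $A=CC^\dagger A$ needs $\mathcal{R}(C)=\mathcal{R}(A)$, which you correctly get from the dimension count, and the row-space analogue needs $\mathcal{R}(R^*)=\mathcal{R}(A^*)$ together with the fact that $R^\dagger R$ projects onto $\mathcal{R}(R^*)$; both are exactly as you state them.
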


\begin{proof}
Given the constraint on $U$, it follows that $\rank(C)=\rank(R)=\rank(A)$.  Thus, there exists an $X\in\K^{s\times n}$ such that $A=CX$ (in fact there are infinitely many if $s>\rank(A)$).  Now suppose that $P_I$ is the row-selection matrix which picks out the rows of $A$ according to the index set $I$.  That is, we have $P_IA = R$, and likewise $P_IC = U$.  Then the following holds:
\begin{displaymath}
\begin{array}{lll}
A = CX & \Leftrightarrow & P_IA = P_ICX\\
& \Leftrightarrow & R = UX.\\
\end{array}
\end{displaymath}
The second equivalence is by definition of $U, R$, and $P_I$, while the forward direction of the first equivalence is obvious.  The backward direction holds by the assumption on the rank of $U$, and hence on $C$ and $R$.  That is, the row-selection matrix $P_I$ eliminates rows that are linearly dependent on the rest, and so any solution to $P_IA = P_ICX$ must be a solution to $A=CX$.  Finally, it suffices to show that $X = U^\dagger R$ is a solution to $R=UX$.  By the same argument as above, it suffices to show that $U^\dagger R$ is a solution to $RP_J=U = UXP_J$ if $P_J$ is a column-selection matrix which picks out columns according to the index set $J$.  Thus, noting that $UU^\dagger RP_J = UU^\dagger U = U$ completes the proof.
\end{proof}

\begin{figure}[ht]
\includegraphics[scale=0.4]{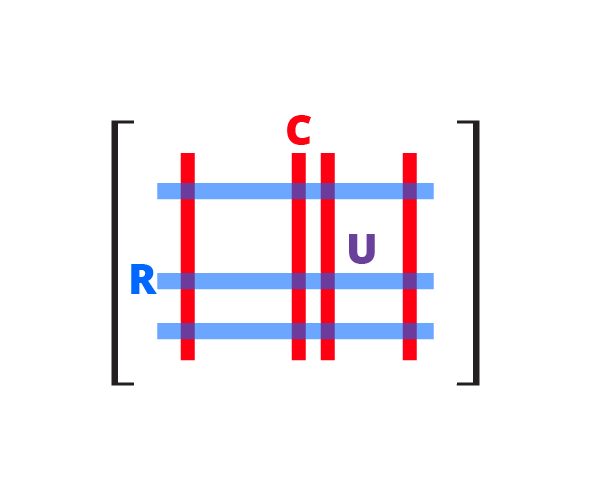}
\caption{Illustration of the CUR decomposition. $C$ is the red column submatrix of $A$, while $R$ is the blue row submatrix of $A$, and $U$ is their intersection (naturally purple).}\label{FIG:CUR}
\end{figure}

Figure \ref{FIG:CUR} provides an illustration of the CUR decomposition from a matrix point of view, whereas Figure \ref{FIG:CURSpace2} shows the intuition of the decomposition based on the viewpoint of the linear operators that the matrices represent.

\begin{remark}\label{REM:CUR}
Upon careful examination of the proof of Theorem \ref{THM:CUR}, we observe that the conclusion $A=CU^\dagger R$ holds also in the event that columns and rows of $A$ are repeated.  That is, if $I=\{i_1,\dots,i_t\}$, $i_k\in[n]$ and $J=\{j_1,\dots,j_s\}$, $j_k\in[m]$ (where the $i_k$ and $j_k$ are not necessarily distinct), and $C$ consists of columns $A(:,i_k)$, $k\in[t]$ and $R$ consists of rows $A(j_k,:)$, $k\in[s]$, with $U_{k,\ell}=A_{i_k,j_\ell}$, then provided $\rank(U)=\rank(A)$, we have $A=CU^\dagger R$.  This observation will be used in Section \ref{SEC:ColumnSelection}.
\end{remark}
\begin{figure}[ht]
		\includegraphics[scale=0.7]{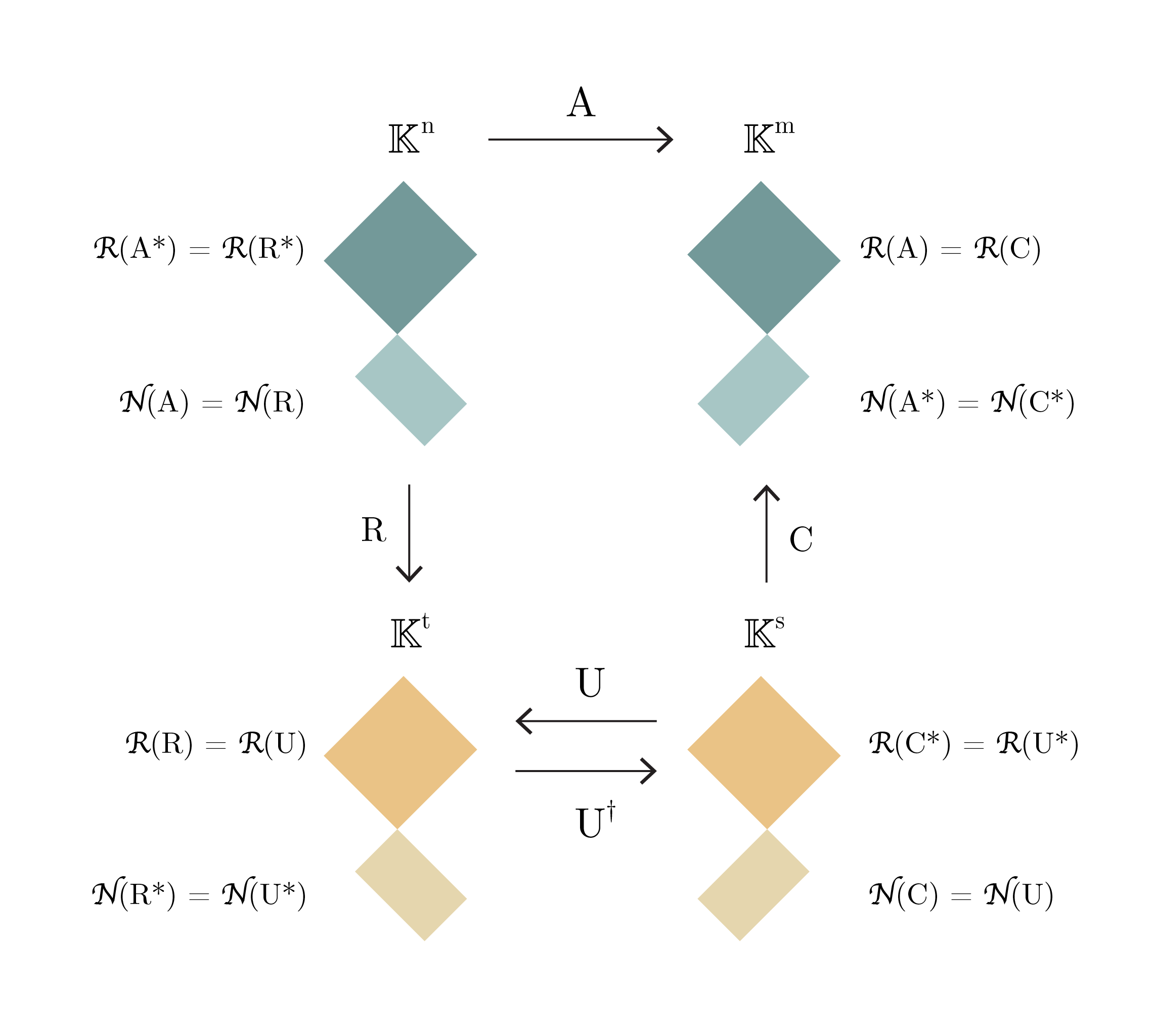}
		\caption{A diagram of the CUR decomposition viewed as linear operators. The illustration of the decomposition of the spaces is inspired by \cite{Strang}.  Because $R$ and $C$ capture the essential information of $A$, the Null spaces correspond as shown (see also Lemma \ref{LEM:Projections}), and $U^\dagger$ is the inverse of $U$ on its range, which allows the diagram to commute.}\label{FIG:CURSpace2}
\end{figure}

\subsection{Best Low Rank Approximation: $A \approx CC^\dagger AR^\dagger R$}

A low rank \textit{approximation} of a matrix $A$ is a matrix with small (compared to the size of the matrix) rank which is ideally close to $A$ in norm.  Many low rank approximation (and indeed decomposition) methods begin with the idea that perhaps a basis other than the canonical one is a ``better" basis in which to represent a given matrix, where the term better is vague and dependent upon the context.  The second viewpoint on CUR stems from this same idea and is, in our opinion, the one more closely tied to those interested in data science, whether in theory or practice.  So with that in mind, suppose that we would like to take a matrix $A\in\K^{m\times n}$, and find a rank $k$ approximation to it given some fixed $k$.  Of course, it is well known that if one desires the \textit{best} rank $k$ approximation to $A$, then one needs look no further than its truncated Singular Value Decomposition (SVD).  That is, if $U_k, \Sigma_k, V_k$ are the first $k$ left singular vectors, singular values, and right singular vectors, respectively, then we have
\begin{equation}\label{EQ:SVD Minimizer} \underset{X: \rank(X)=k}{\text{argmin}}\; \|A-X\|_{\xi} = U_k\Sigma_kV_k^*,\end{equation}
in the case that $\xi$ is any Schatten $p$--norm.

Given this observation, the reader might be forgiven for thinking, why should one look any further for a low-rank approximation for $A$ when the SVD provides the best?  Well, this is not the whole story, of course.  One reason to continue the search is that the SVD fails to preserve much of the structure of $A$ as a matrix.  For example, suppose that $A$ is a sparse matrix; then in general $U$ and $V$, hence $U_k$ and $V_k$, fail to be sparse.  Therefore, the SVD does not necessarily remain faithful to the matricial structure of $A$.  For yet another reason, we turn to the wisdom of Mahoney and Drineas \cite{DMPNAS}.  As they point out, a major factor in analyzing data is interpretability of the results.  Consequently, if one manipulates data in some fashion, one must do it in such a way as to still be able to make a meaningful conclusion.

Let us take the SVD as a case in point: suppose in a medical study, a researcher observes a large number $n$ of gene expression levels in $m$ patients and concatenates the data into an $m\times n$ matrix $A$. Supposing the desired outcome is to determine which genes are most indicative of cancer risk in patients, the researcher attempts to reduce the dimension of the data significantly, and so takes the truncated SVD of this matrix, and looks at the data in the $k$--dimensional basis $U_k$.  But what does a singular vector in $U_k$ correspond to?  It will generally be a linear combination of the genes; so what would it mean, say, that the first two singular vectors capture the majority of information in the data if the singular vectors are combinations of all of the gene expressions?  In using the SVD, interpretability of the data has been utterly lost.

Finally, computing the full SVD of a matrix $A$ is expensive (the na\"{i}ve direct algorithm requires $O(\min\{mn^2, nm^2\})$ operations).  Nonetheless, it is more economical to compute the truncated SVD; indeed, computing $A_k$ requires only $O(mnk)$ operations \cite{tropp}.

So what would be a better alternative?  It seems natural to ask: can we choose only a few representative columns of our data matrix $A$ such that they essentially capture all of the necessary information about $A$?  Or more precisely, can we project $A$ onto the space spanned by some representative columns such that the result is close to $A$ in norm?  Unsurprisingly, this problem is important enough to be named, and is typically called the \textit{Column Subset Selection Problem}.  Before stating the problem, consider the preliminary observation that if $C$ is a column submatrix of $A$, then the following holds for $\xi = 2$ or $F$ \cite[Theorem 10.B.7]{MOA_2011}:
\begin{equation}
\label{EQ:MinA-CX}\underset{X}{\argmin}\;\|A-CX\|_{\xi} = C^\dagger A.  \end{equation} 
This property follows from the fact that the Moore--Penrose pseudoinverse gives the least-squares solution to a system of equations.  With this observation in hand, combined with the knowledge that $CC^\dagger:\K^m\to\K^m$ is the projection operator onto the column space of $C$, the Column Subset Selection Problem may be stated as follows.

\begin{problem}[Column Subset Selection Problem]
Given a matrix $A\in\K^{m\times n}$ and a fixed $k\in[n]$, find a column submatrix $C=A(:,J)$ which solves the following:
\[ \underset{J\subset[n], |J|=k}{\underset{C=A(:,J)}{\min}}\; \|A-CC^\dagger A\|_\xi,\]
where $\xi$ is a norm allowed to be specified -- typically chosen to be either $2$ or F.
\end{problem}

The astute observer will notice that this problem is difficult in general (more on its complexity in Section \ref{SUBSEC:History}).  Indeed, there are $\binom{n}{k}$ choices of matrices $C$ over which to minimize.  However, let us set this difficulty aside for the moment and return to the problem at hand.  Equation \eqref{EQ:MinA-CX} tells us that given a column submatrix $C$ which solves the Column Selection Problem, $A$ is best represented by $A\approx CC^\dagger A$.  But why stop there?  We may as well also select some rows of $A$ which best capture the essential information of its row space.  Similar to \eqref{EQ:MinA-CX}, one may easily show that, given a row submatrix $R$ of $A$, the following holds for $\xi=2$ or $F$:
\[ \underset{X}\argmin\;\|A-XR\|_\xi = AR^\dagger.\]
Therefore, we now wish to find the best rows which minimize the argument above.  Rather than calling this the ``Row Subset Selection Problem," simply note that this is equivalent to solving the Column Subset Selection Problem on $A^*$.

It is now natural to stitch these tasks together, and attempt to find the minimizer of $\|A-CZR\|_\xi$ given a fixed $C$ and $R$.

\begin{proposition}[\cite{stewart_minimizer}]\label{PROP:CAR}
Let $A\in\K^{m\times n}$ and $C$ and $R$ be column and row submatrices of $A$, respectively.  Then the following holds:
\[ \underset{Z}{\text{argmin}}\;\|A-CZR\|_F = C^\dagger AR^\dagger.\]
\end{proposition}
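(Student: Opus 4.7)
The plan is to reduce the problem to an orthogonal projection argument in the Frobenius (Hilbert--Schmidt) inner product. Set $P_C := CC^\dagger$ and $Q_R := R^\dagger R$; by the standard properties of the pseudoinverse recalled in Section \ref{SEC:Notation}, these are the orthogonal projections onto $\mathcal{R}(C)$ and $\mathcal{R}(R^*)$ respectively. Observe that for any $Z$, the matrix $CZR$ lies in the subspace
\[ \mathcal{M} := \{ M : \mathcal{R}(M)\subseteq \mathcal{R}(C),\ \mathcal{R}(M^*)\subseteq \mathcal{R}(R^*)\} = \{M : P_CMQ_R = M\}. \]
The first step will therefore be to argue that the Frobenius-norm minimization of $\|A-CZR\|_F$ over all $Z$ is equivalent to minimizing $\|A-M\|_F$ over $M\in\mathcal{M}$.

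The second step is the orthogonal decomposition
\[ A = P_C A Q_R + (A - P_C A Q_R), \]
and the claim that the residual $A-P_CAQ_R$ is perpendicular to every $X\in\mathcal{M}$ with respect to the Frobenius inner product $\langle Y,X\rangle = \operatorname{tr}(Y^*X)$. This is a short calculation: using the Hermiticity and idempotency of $P_C$ and $Q_R$, together with $X = P_C X Q_R$ and cyclicity of the trace, one obtains $\langle A - P_CAQ_R, X\rangle = \operatorname{tr}(A^*X) - \operatorname{tr}(A^* P_CXQ_R) = 0$. The Pythagorean theorem then yields, for every $M\in\mathcal{M}$,
\[ \|A-M\|_F^2 = \|A - P_CAQ_R\|_F^2 + \|P_CAQ_R - M\|_F^2, \]
so the infimum over $\mathcal{M}$ is attained precisely when $M = P_CAQ_R = CC^\dagger A R^\dagger R$.

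The third step is to exhibit a $Z$ that realizes this optimum. Setting $Z = C^\dagger A R^\dagger$ gives $CZR = CC^\dagger A R^\dagger R = P_C A Q_R$, so $Z = C^\dagger A R^\dagger$ is a minimizer and the proof is complete.

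I expect the only mild obstacle to be the orthogonality verification in the second step, because one must be careful about on which side each projector acts; everything else is bookkeeping with the defining identities of the Moore--Penrose pseudoinverse. A side remark: when either $C$ fails to have full column rank or $R$ fails to have full row rank, the minimizer $Z$ is non-unique, since $Z$ can be shifted by any element of $\mathcal{N}(C)$ on one side or $\mathcal{N}(R^*)$ on the other without altering $CZR$; the content of the proposition is then that $C^\dagger A R^\dagger$ is a (canonical) choice of minimizer, and if desired one can further note that it is the minimum-Frobenius-norm minimizer, because $C^\dagger$ and $R^\dagger$ map into $\mathcal{R}(C^*)$ and $\mathcal{R}(R)$ respectively, which are orthogonal to the ambiguity subspaces.
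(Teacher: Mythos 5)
The paper does not actually prove Proposition \ref{PROP:CAR}; it is imported from \cite{stewart_minimizer} as a citation, so there is no internal proof to compare against. Your argument is correct and complete as a self-contained proof. The three steps all check out: the set $\{CZR : Z \text{ arbitrary}\}$ coincides with the subspace $\mathcal{M}$ of matrices fixed by $M\mapsto CC^\dagger M R^\dagger R$ (you verify one inclusion explicitly and the other is supplied by Step 3); the residual $A - CC^\dagger A R^\dagger R$ is orthogonal to $\mathcal{M}$ in the trace inner product by the Hermiticity and idempotency of the two projectors plus cyclicity; and the Pythagorean identity then identifies $CC^\dagger AR^\dagger R$ as the unique closest point of $\mathcal{M}$ to $A$, realized by $Z=C^\dagger AR^\dagger$. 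Your closing caveat is also the right one: the minimizing $Z$ is not unique in general, so the displayed equality should be read as ``$C^\dagger AR^\dagger$ is a (canonical, indeed minimum-Frobenius-norm) element of the argmin.'' One virtue of your route worth noting is that it makes transparent why the result is specific to the Frobenius norm --- the whole argument rests on the Hilbert-space structure of $\langle Y,X\rangle=\operatorname{tr}(Y^*X)$ --- which is exactly consistent with the example following the proposition showing that the statement fails for the spectral norm.
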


Given the result of Proposition \ref{PROP:CAR}, much of the literature surrounding the CUR decomposition takes
\[ A\approx CC^\dagger AR^\dagger R\]
to be the CUR decomposition of $A$.  However, to be more precise, we will herein term this a CUR \textit{approximation} of $A$.

It should be noted that Proposition \ref{PROP:CAR} is not true for spectral norm as the follow example demonstrates.
\begin{example}
Consider \[A=\begin{bmatrix}
1& 1\\
1&2\\
\end{bmatrix}, C=\begin{bmatrix}
1\\
1\\
\end{bmatrix}, R=\begin{bmatrix}
1&1\\
\end{bmatrix}.\]  First note that in this case when evaluating $\text{argmin}_z\|A-CzR\|_\xi$, $z$ is simply a scalar. It is a simple exercise to demonstrate that $\|A-CzR\|_F^2 = 3(1-z)^2+(2-z)^2$, which is minimized whenever $z=\frac54 = C^\dagger AR^\dagger$. However, under the 2-norm, one can show that $z=1$ is the optimal solution by considering the maximal eigenvalue of $A-CzR$ and computing the minimizer explicitly. Thus Proposition \ref{PROP:CAR} does not hold for the spectral norm in general.
\end{example}

\begin{example}
Another example of a different sort is to take $A = I$, and $C$ and $R$ to again be the first column and row, respectively.  Then $C^\dagger AR^\dagger=1$, but the eigenvalues of $A-CzR$ are $1$ and $1-z$.  So any $z\in[0,2]$ yields a minimum value for $\|A-CzR\|_2$ of 1.  This example also illustrates the key fact that there may be a continuum of matrices $U$ for which $\|A-CUR\|_2$ is constant.
\end{example}

\subsection{Equality in the Exact Case}

Previously, we discussed two starting points and conclusions for what is termed the CUR decomposition in the literature.  The purpose of this and the next subsection is to compare these two viewpoints.  First, we demonstrate in Theorem \ref{THM:Characterization} that in the exact decomposition case, these viewpoints are in fact one and the same. However, in the process of doing so, we do more by giving several equivalent characterizations of when an exact CUR decomposition is obtained. Before stating this theorem, we make note of some useful facts about the matrices involved.

\begin{lemma}\label{LEM:Projections}
Suppose that $A, C, U,$ and $R$ are as in Theorem \ref{THM:CUR}, with $\rank(A)=\rank(U)$.  Then $\mathcal{N}(C)=\mathcal{N}(U)$, $\mathcal{N}(R^*)=\mathcal{N}(U^*)$, $\mathcal{N}(A)=\mathcal{N}(R)$, and $\mathcal{N}(A^*)=\mathcal{N}(C^*)$.  Moreover,
\[ C^\dagger C = U^\dagger U,\qquad  RR^\dagger=UU^\dagger,\]
\[ AA^\dagger = CC^\dagger, \quad \text{and} \quad A^\dagger A = R^\dagger R.\]
\end{lemma}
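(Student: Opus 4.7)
My plan is to first establish all four null space equalities, and then to derive the four projection identities as immediate consequences. The unifying principle is that $X^\dagger X$ is the orthogonal projection onto $\mathcal{N}(X)^\perp$ and $XX^\dagger$ is the orthogonal projection onto $\mathcal{R}(X) = \mathcal{N}(X^*)^\perp$; thus each of the four claimed projector identities reduces to one of the null space equalities.

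The null space equalities all follow from the same two-step pattern. First, I would exploit the fact that $U$, $C$, $R$ all sit inside $A$ via row- and column-selection matrices. Specifically, writing $P_I$ for the row-selection matrix picking the rows indexed by $I$ and $Q_J$ for the column-selection matrix picking the columns indexed by $J$, I have $C = AQ_J$, $R = P_I A$, and $U = P_I C = R Q_J$. From $U = P_I C$, any $x \in \mathcal{N}(C)$ trivially satisfies $x \in \mathcal{N}(U)$, giving $\mathcal{N}(C) \subseteq \mathcal{N}(U)$; the reverse direction is where ranks enter. Since the hypothesis gives $\rank(U) = \rank(A) = k$ and we always have $\rank(U) \leq \rank(C) \leq \rank(A)$, we conclude $\rank(C) = k$, so rank-nullity forces $\dim\mathcal{N}(C) = |J| - k = \dim\mathcal{N}(U)$ and the inclusion becomes equality. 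The identity $\mathcal{N}(R^*) = \mathcal{N}(U^*)$ comes from the dual relation $U^* = Q_J^* R^*$ and the same rank-nullity argument applied to $R^*$ and $U^*$. The identities $\mathcal{N}(A) = \mathcal{N}(R)$ and $\mathcal{N}(A^*) = \mathcal{N}(C^*)$ are handled identically, using $R = P_I A$ and $C^* = Q_J^* A^*$ together with $\rank(R) = \rank(C) = \rank(A) = k$.

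Once the null space equalities are in hand, the projector equalities are one-liners. Since $C^\dagger C$ and $U^\dagger U$ are the orthogonal projections onto $\mathcal{N}(C)^\perp$ and $\mathcal{N}(U)^\perp$ respectively, the first equality in \ref{LEM:Projections} follows from $\mathcal{N}(C) = \mathcal{N}(U)$. Analogously, $RR^\dagger$ and $UU^\dagger$ project onto $\mathcal{N}(R^*)^\perp$ and $\mathcal{N}(U^*)^\perp$, so the second equality follows from $\mathcal{N}(R^*) = \mathcal{N}(U^*)$. The same reasoning gives $AA^\dagger = CC^\dagger$ from $\mathcal{N}(A^*) = \mathcal{N}(C^*)$, and $A^\dagger A = R^\dagger R$ from $\mathcal{N}(A) = \mathcal{N}(R)$.

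There is no real obstacle here; the only subtle point is keeping track of which direction of inclusion is free and which requires the rank hypothesis. The free direction is always the one coming from the selection-matrix factorization (restricting to a submatrix only enlarges the null space), and the rank hypothesis $\rank(U) = \rank(A)$ is precisely what forces $C$ and $R$ to also have full rank $k$, so that the dimension count closes up and the reverse inclusion is automatic.
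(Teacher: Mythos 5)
Your proof is correct and follows essentially the same route as the paper's: both deduce the null space equalities from the trivial submatrix inclusion plus the fact that $\rank(U)=\rank(A)$ forces $\rank(C)=\rank(R)=k$, and then identify $X^\dagger X$ and $XX^\dagger$ as orthogonal projections onto $\mathcal{N}(X)^\perp$ and $\mathcal{N}(X^*)^\perp$ respectively. Your version merely makes the selection-matrix factorizations and the rank--nullity dimension count explicit where the paper leaves them implicit.
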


\begin{proof}
As noted in the proof of Theorem \ref{THM:CUR}, the constraint that $\rank(U)=\rank(A)=k$ implies also that $C$ and $R$ have rank $k$ as well.  The statements about the kernels follows directly from this observation.  To prove the moreover statements, notice that if $C\in\K^{m\times s}$, then $C^\dagger C:\K^s\to\K^s$ is the orthogonal projection onto $\mathcal{N}(C)^\perp$.  However, $\mathcal{N}(C)=\mathcal{N}(U)$ since their ranks are the same and $U$ is obtained by selecting certain rows of $C$, and hence $\mathcal{N}(C)^\perp=\mathcal{N}(U)^\perp$.  Therefore, $C^\dagger C$ is the orthogonal projection onto $\mathcal{N}(U)^\perp$, but this is $U^\dagger U$.  Similarly, $RR^\dagger$ is the projection onto $\mathcal{N}(R^*)^\perp = \mathcal{N}(U^*)^\perp$, whence $RR^\dagger=UU^\dagger$, and the proof is complete.  The final two statements follow by the same reasoning, so the details are omitted.
\end{proof}

\begin{lemma}\label{rankE}
	Let $A\in\mathbb{K}^{m\times n}$ and $B\in\mathbb{K}^{n\times p}$. If $\rank(A)=\rank(B)=n$, then $\rank(AB)=n$.
	\end{lemma}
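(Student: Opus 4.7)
The plan is to exploit the fact that the rank hypotheses force $A$ to have full column rank and $B$ to have full row rank, and then show that multiplication by $A$ is injective on the column space of $B$, which is all of $\mathbb{K}^n$. Concretely, since $A\in\mathbb{K}^{m\times n}$ has rank $n$, the map $A:\mathbb{K}^n\to\mathbb{K}^m$ is injective, i.e., $\mathcal{N}(A)=\{0\}$; and since $B\in\mathbb{K}^{n\times p}$ has rank $n$, the map $B:\mathbb{K}^p\to\mathbb{K}^n$ is surjective, i.e., $\mathcal{R}(B)=\mathbb{K}^n$.

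Given these two observations, I would compute $\mathcal{R}(AB)=A(\mathcal{R}(B))=A(\mathbb{K}^n)=\mathcal{R}(A)$ directly, and then take dimensions to get $\rank(AB)=\dim\mathcal{R}(A)=n$. Alternatively, the same conclusion follows from the two-sided rank estimate
\[\rank(A)+\rank(B)-n\;\leq\;\rank(AB)\;\leq\;\min\{\rank(A),\rank(B)\},\]
where the lower bound is Sylvester's rank inequality; plugging in $\rank(A)=\rank(B)=n$ forces both bounds to equal $n$. Either route is clean; I would lead with the range-computation argument because it is a one-line observation and does not require citing Sylvester's inequality.

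There is essentially no obstacle here; the statement is a standard fact from elementary linear algebra, and the only thing to be careful about is articulating which of the two full-rank conditions gives injectivity versus surjectivity (so that the composition $AB$ keeps the image of $B$ intact under $A$). I would therefore keep the proof to two or three lines and move on, as the lemma is clearly a stepping-stone to the more substantive equivalences in Theorem \ref{THM:Characterization}.
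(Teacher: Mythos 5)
Your proposal is correct. The paper actually omits the proof entirely, remarking only that it is ``a straightforward exercise using Sylvester's rank inequality'' --- which is precisely your second route: $\rank(AB)\geq\rank(A)+\rank(B)-n=n$ combined with the trivial upper bound. Your lead argument is a mildly different (and arguably cleaner) route: full column rank of $A$ gives injectivity, full row rank of $B$ gives surjectivity onto $\K^n$, so $\mathcal{R}(AB)=A(\mathcal{R}(B))=A(\K^n)=\mathcal{R}(A)$ has dimension $n$. Both are airtight; the range computation has the small advantage of not invoking Sylvester's inequality as a black box, while the inequality route is the one-line citation the authors had in mind. Nothing to fix.
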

	The proof of Lemma \ref{rankE} is a straightforward exercise using Sylvester's rank inequality, and so is omitted.
	
	\begin{corollary}\label{CR_RANK_U}
	   Let $A\in\mathbb{K}^{m\times n}$ with $\rank(A)=k$. Let $C=A(:,J)$ and $R=A(I,:)$ with $\rank(C)=\rank(R)=k$. Then $\rank(U)=k$ with $U=A(I,J)$.
	\end{corollary}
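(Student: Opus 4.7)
The plan is to display $U$ as a product of two rank-$k$ matrices whose inner dimension is exactly $k$, so that Lemma \ref{rankE} applies verbatim. Because $\rank(A)=k$, take a full rank factorization $A=XY$ with $X\in\K^{m\times k}$ and $Y\in\K^{k\times n}$, each of rank $k$. Writing $X_I:=X(I,:)$ and $Y_J:=Y(:,J)$, restricting rows and columns of this factorization immediately yields
\[ C = XY_J,\qquad R = X_IY,\qquad U = X_IY_J. \]

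Next I would argue that both $X_I$ and $Y_J$ have rank $k$. Since $X$ has full column rank, the map $z\mapsto Xz$ is injective, so $\mathcal{N}(Y_J)=\mathcal{N}(XY_J)=\mathcal{N}(C)$, forcing $\rank(Y_J)=\rank(C)=k$. Symmetrically, $Y$ having full row rank yields $\rank(X_I)=\rank(R)=k$. With $X_I\in\K^{|I|\times k}$ and $Y_J\in\K^{k\times|J|}$ both of rank $k$, Lemma \ref{rankE} applied to $U=X_IY_J$ produces $\rank(U)=k$, which is what I want.

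The only real obstacle is the intermediate bookkeeping step, namely passing from $\rank(XY_J)=k$ to $\rank(Y_J)=k$, and this is precisely where the full rank factorization earns its keep: because $X$ restricts to an injection on $\K^k$, left-multiplication by $X$ preserves null spaces and hence ranks. A more geometric alternative that avoids the factorization is to observe that $\mathcal{R}(C)=\mathcal{R}(A)$ (because $\rank(C)=\rank(A)$ and $C$ is a column submatrix of $A$), that the row-selector $P_I$ with $P_IA=R$ is injective on $\mathcal{R}(A)$ (because $\rank(P_IA)=\rank(A)$), and therefore $\rank(U)=\rank(P_IC)=\dim P_I\mathcal{R}(C)=\dim\mathcal{R}(C)=k$.
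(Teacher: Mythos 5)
Your proof is correct and follows essentially the same route as the paper's: the paper factors $A$ through its truncated SVD as $W_k\Sigma_k V_k^*$, restricts rows and columns to get $U=W_k(I,:)\Sigma_k(V_k(J,:))^*$, and applies Lemma \ref{rankE}, which is exactly your argument with a generic full rank factorization $A=XY$ in place of the SVD. Your injectivity argument for passing from $\rank(C)=k$ to $\rank(Y_J)=k$ supplies a detail the paper asserts without proof, but the overall structure is the same.
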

	\begin{proof}
	Assume that $A$ has truncated SVD $A=W_k\Sigma_k V_k^*$. Then $C=W_k\Sigma_k (V_k(J,:))^*$. Since $\rank(C)=k$, we have $\rank(V_k(J,:))=k$. Similarly, we can conclude that $\rank(W_k(I,:))=k$. Note that $U=A(I,J)=W_k(I,:)\Sigma_k (V_k(J,:))^*$, hence by Lemma \ref{rankE}, we have $\rank(U)=k$.
	\end{proof}
	
As a side note, one can also demonstrate a different form for $U$ aside from simply intersection of $C$ and  $R$.

\begin{proposition}\label{PROP:U=RAC}
	Suppose that $A$, $C$, $U$, and $R$  are as in Theorem \ref{THM:CUR} (but without any assumption on the rank of $U$). Then \[U=RA^{\dagger}C.\]
\end{proposition}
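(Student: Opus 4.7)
The plan is to reduce the claim to the defining identity $AA^\dagger A = A$ of the Moore--Penrose pseudoinverse by encoding the submatrix operations as multiplication by selection matrices. Concretely, I would introduce a row-selection matrix $P_I \in \K^{|I| \times m}$, whose rows are the rows of the $m \times m$ identity indexed by $I$, and a column-selection matrix $Q_J \in \K^{n \times |J|}$, whose columns are the columns of the $n \times n$ identity indexed by $J$. Then by construction,
\[
R = P_I A, \qquad C = A Q_J, \qquad U = P_I A Q_J.
\]

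The whole proof then collapses into a one-line computation:
\[
R A^\dagger C \;=\; (P_I A) A^\dagger (A Q_J) \;=\; P_I \bigl(A A^\dagger A\bigr) Q_J \;=\; P_I A Q_J \;=\; U,
\]
where the third equality uses property (i) of the pseudoinverse recalled in Section \ref{SEC:Notation}. Note that no rank hypothesis on $U$, $C$, or $R$ is needed, which matches the statement of the proposition.

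There is essentially no obstacle here, only a minor bookkeeping point: as remarked after Theorem \ref{THM:CUR}, the paper allows $I$ and $J$ to be lists with repeated indices. To cover this, I would simply allow the rows of $P_I$ (respectively columns of $Q_J$) to be standard basis vectors with repetition permitted; this changes nothing in the identities $R = P_I A$, $C = A Q_J$, $U = P_I A Q_J$, so the one-line computation above still delivers $U = R A^\dagger C$ in full generality.
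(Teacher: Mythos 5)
Your proof is correct, and it takes a different route from the paper's. The paper proves this by writing out the full SVD $A = W_A\Sigma_A V_A^*$, expressing $C$, $R$, and $U$ as submatrices of the SVD factors, and then cancelling $V_A^*V_A$, $W_A^*W_A$, and $\Sigma_A\Sigma_A^\dagger\Sigma_A = \Sigma_A$ in the product $RA^\dagger C$. You instead encode the submatrix extractions as multiplication by selection matrices $P_I$ and $Q_J$ and invoke the Penrose identity $AA^\dagger A = A$ directly, which collapses the whole argument to one line. Your version is more elementary and slightly more general: it only uses the $\{1\}$-inverse property, so it actually shows $U = RGC$ for \emph{any} generalized inverse $G$ satisfying $AGA = A$, not just the Moore--Penrose pseudoinverse. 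What the paper's SVD computation buys in exchange is the explicit representations $R = W_A(I,:)\Sigma_A V_A^*$ and $C = W_A\Sigma_A V_A^*(:,J)$, which are reused verbatim later (e.g.\ in Proposition \ref{PROP:NormTerms}), so the longer proof is doing double duty as setup for the perturbation analysis. Your handling of repeated indices via repeated rows of $P_I$ and columns of $Q_J$ is also fine and matches Remark \ref{REM:CUR}.
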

\begin{proof}
Let the full singular value decomposition of $A$ be $A=W_A\Sigma_A V_A^*$. Then $C=A(:,J)=W_A\Sigma_AV_A^*(:,J)$, $R=A(I,:)=W_A(I,:)\Sigma_AV_A^*$, and $U=U_A(I,:)\Sigma_AV_A^*(:,J)$.
Therefore, we have
\begin{eqnarray*}
	RA^{\dagger}C&=&W_A(I,:)\Sigma_AV_A^*V_A\Sigma_A^\dagger W_A^*W_A\Sigma_AV_A^*(:,J)\\
	&=&W_A(I,:)\Sigma_A\Sigma_A^\dagger\Sigma_AV_A^*(:,J)\\
	&=&W_A(I,:)\Sigma_AV_A^*(:,J)=U.
\end{eqnarray*}
\end{proof}	
	
We are now in a position to state our main theorem characterizing exact CUR decompositions.

\begin{theorem}\label{THM:Characterization}
Let $A\in\K^{m\times n}$ and $I\in[m]$, $J\in[n]$ (possibly having redundant entries).  Let $C=A(I,:)$, $U=A(I,J)$, and $R=A(:,J)$.  Then the following are equivalent:
\begin{enumerate}[(i)]
    \item\label{ITEM:Rank} $\rank(U)=\rank(A)$
    \item\label{ITEM:CUR} $A=CU^\dagger R$
    \item\label{ITEM:ACCARR} $A = CC^\dagger AR^\dagger R$
    \item\label{ITEM:Adagger} $A^\dagger = R^\dagger UC^\dagger$
    \item\label{ITEM:Spans} $\rank(C)=\rank(R)=\rank(A)$.
\end{enumerate}
\end{theorem}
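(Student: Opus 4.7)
The plan is to establish the equivalences via a hub-and-spoke pattern centered at (i), since Theorem \ref{THM:CUR}, Lemma \ref{LEM:Projections}, and Corollary \ref{CR_RANK_U} all connect (i) directly to the other statements. First I would dispose of (i) $\Leftrightarrow$ (v): since $U$ is a submatrix of both $C$ and $R$, which are themselves submatrices of $A$, we have $\rank(U)\le\min\{\rank(C),\rank(R)\}\le\rank(A)$; equality at the left end forces (v), and the converse is Corollary \ref{CR_RANK_U}. This lets me treat (i) and (v) interchangeably throughout.

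Next, (i) $\Rightarrow$ (ii) is exactly Theorem \ref{THM:CUR} (combined with Remark \ref{REM:CUR} to cover repeated indices), and (ii) $\Rightarrow$ (i) follows by taking ranks: $\rank(A)=\rank(CU^\dagger R)\le \rank(U)\le\rank(A)$. For (i) $\Rightarrow$ (iii) I would invoke Lemma \ref{LEM:Projections}, which under the rank hypothesis yields $CC^\dagger=AA^\dagger$ and $R^\dagger R=A^\dagger A$, so that $CC^\dagger A R^\dagger R=AA^\dagger A A^\dagger A = A$ by the defining pseudoinverse identities. Conversely, (iii) $\Rightarrow$ (v) again reduces to the rank-of-product inequality $\rank(A)=\rank(CC^\dagger A R^\dagger R)\le\min\{\rank(C),\rank(R)\}\le\rank(A)$.

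The main obstacle — and the only real computation — is (i) $\Rightarrow$ (iv). The strategy is to use the truncated SVD $A=W_k\Sigma_k V_k^*$: setting $V_J:=V_k(J,:)$ and $W_I:=W_k(I,:)$, one has $C=W_k\Sigma_k V_J^*$, $R=W_I\Sigma_k V_k^*$, $U=W_I\Sigma_k V_J^*$, with both $V_J$ and $W_I$ of full column rank $k$ by Corollary \ref{CR_RANK_U}. I then apply the product-pseudoinverse identity $(XY)^\dagger=Y^\dagger X^\dagger$, valid whenever $X$ has full column rank and $Y$ has full row rank, twice in each factorization to obtain
\[ C^\dagger = V_J(V_J^*V_J)^{-1}\Sigma_k^{-1}W_k^*, \qquad R^\dagger = V_k\Sigma_k^{-1}(W_I^*W_I)^{-1}W_I^*. \]
Substituting into $R^\dagger U C^\dagger$, the Gramian factors $(W_I^*W_I)^{-1}W_I^*W_I$ and $V_J^*V_J(V_J^*V_J)^{-1}$ both collapse to $I_k$, leaving the product $V_k\Sigma_k^{-1}W_k^*=A^\dagger$. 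Verifying the hypotheses for the product-pseudoinverse rule at each split is the point that requires most care, since $(XY)^\dagger = Y^\dagger X^\dagger$ fails in general.

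Finally, (iv) $\Rightarrow$ (i) closes the cycle by yet another rank count: $\rank(A)=\rank(A^\dagger)=\rank(R^\dagger U C^\dagger)\le\rank(U)\le\rank(A)$. In summary, the implications I would record are (i) $\Leftrightarrow$ (v), (i) $\Leftrightarrow$ (ii), (i) $\Leftrightarrow$ (iii), and (i) $\Leftrightarrow$ (iv), with every reverse implication reducing to the trivial fact that the rank of a product is controlled by the ranks of its factors.
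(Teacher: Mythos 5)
Your proof is correct, and its overall hub-and-spoke structure centered at (\ref{ITEM:Rank}) matches the paper's: the implications (\ref{ITEM:Rank})$\Leftrightarrow$(\ref{ITEM:CUR}), (\ref{ITEM:Rank})$\Leftrightarrow$(\ref{ITEM:Spans}), and (\ref{ITEM:Rank})$\Rightarrow$(\ref{ITEM:ACCARR}) are handled essentially identically (Theorem \ref{THM:CUR} plus Remark \ref{REM:CUR}, rank counts, Corollary \ref{CR_RANK_U}, and Lemma \ref{LEM:Projections}). Where you genuinely diverge is in the implication you correctly identify as the crux, (\ref{ITEM:Rank})$\Rightarrow$(\ref{ITEM:Adagger}): the paper's key lemma there is Proposition \ref{PROP:U=RAC}, which gives $U=RA^\dagger C$ and hence $R^\dagger UC^\dagger = R^\dagger RA^\dagger CC^\dagger = A^\dagger AA^\dagger AA^\dagger = A^\dagger$ via Lemma \ref{LEM:Projections} and the Penrose identities --- three lines with no explicit coordinates. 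Your route instead writes $C$, $U$, $R$ in terms of the truncated SVD and invokes the full-column-rank/full-row-rank criterion for $(XY)^\dagger = Y^\dagger X^\dagger$; this is more computational and requires the care you flag about verifying the rank hypotheses at each split (which Corollary \ref{CR_RANK_U} supplies), but it buys a self-contained argument that does not route through Proposition \ref{PROP:U=RAC} and makes visible exactly where condition (\ref{ITEM:Spans}) is consumed. Your reverse implication (\ref{ITEM:Adagger})$\Rightarrow$(\ref{ITEM:Rank}) is also cleaner than the paper's: the direct count $\rank(A)=\rank(A^\dagger)=\rank(R^\dagger UC^\dagger)\leq\rank(U)\leq\rank(A)$ lands on (\ref{ITEM:Rank}) immediately, whereas the paper first deduces $\rank(C)=\rank(R)=\rank(A)$ from $A=AR^\dagger RA^\dagger CC^\dagger A$ and then appeals to Corollary \ref{CR_RANK_U}. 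Similarly, your rank-of-product argument for (\ref{ITEM:ACCARR})$\Rightarrow$(\ref{ITEM:Spans}) replaces the paper's span-inclusion argument with an equally valid one-line count. Both proofs are sound; the paper's is shorter where it can lean on Proposition \ref{PROP:U=RAC}, while yours trades that dependency for an explicit SVD computation.
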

\begin{proof}
Remark \ref{REM:CUR} is the implication $(\ref{ITEM:Rank})\Rightarrow(\ref{ITEM:CUR})$.  To see $(\ref{ITEM:CUR})\Rightarrow(\ref{ITEM:Rank})$,  suppose to the contrary that $\rank(U)\neq\rank(A)$.  By construction of $U$, this implies that $\rank(U)<\rank(A)$.  On the other hand,
\[\rank(A)=\rank(CU^\dagger R)\leq\rank(U^\dagger)=\rank(U)<\rank(A), \]
which yields a contradiction.  Hence $(\ref{ITEM:Rank})$ holds.

The forward direction of $(\ref{ITEM:Rank})\Leftrightarrow(\ref{ITEM:Spans})$ is easily seen, while the reverse direction is the content of Corollary \ref{CR_RANK_U}, and $(\ref{ITEM:Spans})\Rightarrow(\ref{ITEM:ACCARR})$ is obvious given that under the assumption on the spans, $AA^\dagger = CC^\dagger$ according to Lemma \ref{LEM:Projections}.  To see $(\ref{ITEM:ACCARR})\Rightarrow(\ref{ITEM:Spans})$, recall that by construction, $\sspan(C)\subset\sspan(A)$, but $(\ref{ITEM:ACCARR})$ implies that $\sspan(A)\subset\sspan(C)$.  A similar argument shows that $\sspan(R^*)=\sspan(A^*)$, which yields $(\ref{ITEM:Spans})$.

Now suppose that $(\ref{ITEM:Rank})$ (and equivalently ($\ref{ITEM:CUR}$)) holds.  By Proposition \ref{PROP:U=RAC}, $U=RA^\dagger C$, hence the following holds:
\begin{align*}
    R^\dagger UC^\dagger &= R^\dagger RA^\dagger CC^\dagger\\
    &= A^\dagger AA^\dagger AA^\dagger\\
    & = A^\dagger,
\end{align*}
where the second equality follows from Lemma \ref{LEM:Projections} (which requires the assumption $(\ref{ITEM:Rank})$) and the last from properties defining the Moore--Penrose pseudoinverse.  Thus $(\ref{ITEM:Rank})\Rightarrow(\ref{ITEM:Adagger})$. 
Conversely, suppose that $A^\dagger = R^\dagger U C^\dagger$.  Then by Proposition \ref{PROP:U=RAC},
\[ A = AA^\dagger A = AR^\dagger UC^\dagger A = AR^\dagger RA^\dagger CC^\dagger A.\]
Hence $\rank(A)=\rank(AR^\dagger RA^\dagger CC^\dagger A)\leq\rank(C)\leq\rank(A)$. Thus, $\rank(A)=\rank(C)$. Similarly, $\rank(A)=\rank(R)$. Thus an appeal to Corollary \ref{CR_RANK_U} completes the proof of $(\ref{ITEM:Adagger})\Rightarrow(\ref{ITEM:Rank})$.
\end{proof}

Theorem \ref{THM:Characterization} provides several novel characterizations of exact CUR decompositions, and in particular, the equivalence of conditions (\ref{ITEM:CUR}) and $(\ref{ITEM:ACCARR})$ demonstrates that the two proposed viewpoints indeed match in the exact decomposition case (and only in this case).

We now turn to some auxiliary observations.

\begin{proposition}\label{PROP:Udagger}
Suppose that $A, C, U,$ and $R$ are as in Theorem \ref{THM:CUR}, with $\rank(A)=\rank(U)$.  Then \[ U^\dagger=C^\dagger A R^\dagger.\]
\end{proposition}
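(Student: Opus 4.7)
The plan is to derive $U^\dagger = C^\dagger A R^\dagger$ by substituting the exact CUR decomposition into the right-hand side and then reducing the resulting expression using Lemma \ref{LEM:Projections} and the defining properties of the Moore--Penrose pseudoinverse. Since we are assuming $\rank(U) = \rank(A)$, Theorem \ref{THM:Characterization} guarantees that $A = CU^\dagger R$ holds, which is the key algebraic input.

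First I would write
\[
C^\dagger A R^\dagger = C^\dagger \bigl(C U^\dagger R\bigr) R^\dagger = (C^\dagger C)\, U^\dagger\, (R R^\dagger).
\]
Next I would invoke Lemma \ref{LEM:Projections}, which yields $C^\dagger C = U^\dagger U$ and $R R^\dagger = U U^\dagger$ precisely under the rank hypothesis. Substituting these gives
\[
C^\dagger A R^\dagger = U^\dagger U\, U^\dagger\, U U^\dagger.
\]
The final step is a purely formal simplification using the pseudoinverse identities $U^\dagger U U^\dagger = U^\dagger$ and the idempotence of $UU^\dagger$: grouping as $(U^\dagger U U^\dagger)(UU^\dagger) = U^\dagger \cdot UU^\dagger = U^\dagger$ closes the chain.

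I do not expect any serious obstacle here; the argument is essentially a one-line calculation once Theorem \ref{THM:Characterization} and Lemma \ref{LEM:Projections} are in hand. The only point that needs care is making sure that both applications of Lemma \ref{LEM:Projections} are legitimate, i.e., that the rank hypothesis $\rank(U) = \rank(A)$ is what justifies simultaneously replacing $C^\dagger C$ and $RR^\dagger$ by their $U$-counterparts; this is exactly the content of that lemma, so no additional work is required. An alternative route via Proposition \ref{PROP:U=RAC} (writing $U = RA^\dagger C$) is available but appears strictly longer, so the substitution approach above is the cleanest.
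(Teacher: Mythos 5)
Your proposal is correct and is essentially identical to the paper's own proof: substitute $A = CU^\dagger R$, replace $C^\dagger C$ and $RR^\dagger$ by $U^\dagger U$ and $UU^\dagger$ via Lemma \ref{LEM:Projections}, and simplify with the pseudoinverse identities. The only cosmetic difference is that the paper cites Theorem \ref{THM:CUR} rather than Theorem \ref{THM:Characterization} for the decomposition $A = CU^\dagger R$.
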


\begin{proof}
Under these assumptions, $A=CU^\dagger R$ by Theorem \ref{THM:CUR}.  On account of Lemma \ref{LEM:Projections},
\begin{align*}
C^\dagger AR^\dagger & = C^\dagger CU^\dagger  R R^\dagger \\
& = U^\dagger UU^\dagger UU^\dagger\\
& = U^\dagger,
\end{align*}
where the final step follows from basic properties of the Moore--Penrose pseudoinverse.
\end{proof}

\begin{remark}
The condition in Proposition \ref{PROP:Udagger} is not sufficient; if $U^\dagger=C^\dagger A R^\dagger$, then $\rank(A)$ may not equal to $\rank(U)$. For example, let
\[A=\begin{bmatrix}
0&I\\
I&0
\end{bmatrix},\] and let $C=\begin{bmatrix}
0\\
I
\end{bmatrix}$ and $R=\begin{bmatrix}
0&I
\end{bmatrix}$. Then $U=U^\dagger=0$, and $C^\dagger A R^\dagger=\begin{bmatrix}
0 &I
\end{bmatrix}\cdot \begin{bmatrix}
0&I\\
I&0
\end{bmatrix}\cdot \begin{bmatrix}
0\\
I
\end{bmatrix}=0$. Thus $U^\dagger=C^\dagger AR^\dagger$, but $\rank(A)\neq \rank(U)$.
\end{remark}

Note that Proposition \ref{PROP:U=RAC} holds for any choice of column and row submatrices $C$ and $R$.  However, Proposition \ref{PROP:Udagger} requires the additional assumption that $U$ and $A$ have the same rank.  Recall that Proposition \ref{PROP:Udagger} does not follow immediately from Proposition \ref{PROP:U=RAC} without this additional assumption given the fact that $(AB)^\dagger$ is not $B^\dagger A^\dagger$ in general.  Additionally, since the conclusion of Proposition \ref{PROP:Udagger} does not imply $\rank(U)=\rank(A)$, then it cannot imply any of the equivalent conditions given in Theorem \ref{THM:Characterization} in general.  We end this section with an interesting question related to this proposition.  Evidently, if $U^\dagger=C^\dagger AR^\dagger$, then $CU^\dagger R = CC^\dagger AR^\dagger R$.  Does the converse always hold? Note that if $\rank(U)=\rank(A)$, then the converse is true by Theorem \ref{THM:Characterization} because both quantities are $A$.  However, to show that it holds in general, one must determine if it holds in the case that $A\neq CU^\dagger R$.  At the moment, we leave this as an open question; however, numerical experiments give evidence that it is possibly true.

\subsection{Distinctness in the Approximation Case}

Now let us give a simple example to show that in the CUR approximation case, the choice of $C^\dagger AR^\dagger$ for the middle matrix in the CUR approximation indeed gives a better approximation than using $U^\dagger$, and in fact these matrices are not the same in this case.

\begin{example}
Consider the full rank matrix \[
A = \begin{bmatrix}1 & 2\\ 3 & 4\\
\end{bmatrix},
\]
and let \[C = A(:,1) = \begin{bmatrix}1\\3\\
\end{bmatrix}, \text{ and } R = A(1,:) = \begin{bmatrix}1 & 2\\ \end{bmatrix}.\]

Then $U=U^\dagger = 1$, and notice that $\|A-CU^\dagger R\|_F = 2$.  However, minimizing the function $f(z) = \|A-CzR\|_F$ yields a minimal value of $1.0583$ at $z=0.76$, which is $C^\dagger AR^\dagger$.  This simple example demonstrates that in the approximation case when fewer rows and columns are chosen than the rank of $A$, the matrices $U^\dagger$ and $C^\dagger AR^\dagger$ are different in general.
\end{example}

\subsection{Storage and Complexity Considerations}

It is pertinent to discuss the storage cost and complexity of the approximations described above. As these are easily computed, we tabulate them here for the reader without proof for aesthetic purposes.  We assume here that the matrix rank is $k$, and that $r\geq k$ rows and $c\geq k$ columns are chosen as in Theorem \ref{THM:CUR}; all complexity and storage values in Table \ref{TAB:ComplexityStorage} are to be taken as $O(\cdot)$.

\begin{table}[h!]
\begin{tabular}{|c||c|c|}\hline
     Method & Complexity & Storage  \\ \hline\hline
     Full SVD & $\min\{m^2n,mn^2\}$ & $m^2+n^2+k$\\
     Truncated SVD & $mnk$ & $k(m+n+1)$\\
     $CU^\dagger R$ & $\min\{cr^2,c^2r\}$ & $mc+nr+k^2$ \\ 
    $CC^\dagger ARR^\dagger$ & $mc^2 + nr^2$  & $mc+nr+mn$    \\ \hline
\end{tabular}\caption{Complexity and storage sizes of different approximations.}\label{TAB:ComplexityStorage}
\end{table}
Note that in the case $c=r=k$, the storage of the truncated SVD and CUR decomposition are essentially the same, differing by a factor of $k(k-1)$, and the latter's complexity becomes $O(k^3)$, which is smaller than even the truncated SVD.

\subsection{A History}\label{SUBSEC:History}

A precise history of the CUR decomposition (Theorem \ref{THM:CUR}) is somewhat elusive and its origins appear to be folklore at this point.  Many papers cite Gantmacher's book \cite{Gantmacher} without providing a specific location, but noting that the term \textit{matrix skeleton} is used therein.  The authors could not verify this source, as a search of the term \textit{skeleton} in a digital copy of the book turned up no results.  However, we find it implicitly in a paper by Penrose from 1956 \cite{Penrose56} (this is a follow-up paper to the one defining the pseudoinverse that now bears his name).  Therein, Penrose notes (albeit without proof) that any matrix may be written (possibly after rearrangement of rows and columns) in the form
\[ A = \begin{bmatrix} 
B & D\\ E & EB^{-1}D
\end{bmatrix},\]
where $B$ is any nonsingular submatrix of $A$ with $\rank(B)=\rank(A)$, whereupon one can immediately get a valid CUR decomposition for this matrix by choosing $R = \begin{bmatrix}B & D\end{bmatrix}$ and $C =\begin{bmatrix} B \\ E\end{bmatrix}$.  Subsequently, Theorem \ref{THM:CUR} is stated without proof in the case that $U$ is square and invertible in \cite{Goreinov}.  The proof for square submatrices $U$ that are not full rank appears in \cite{CaiafaTensorCUR}.  To the authors' knowledge, the first time the direct proof of the general rectangular $U$ case appears in the literature was in \cite{AHKS}; the proof given here is essentially the one given therein.

The trail of the CUR decomposition as a computational tool runs cold for some time after Penrose's paper, but may be picked up again in the works of Goreinov, Zamarashkin, and Tyrtyshnikov \cite{Goreinov,Goreinov2,Goreinov3}.  The authors therein take for granted the exact CUR decomposition of the form $A=CU^{-1}R$, which is a special case of Theorem \ref{THM:CUR} whenever exactly $\rank(A)$ rows and columns are chosen to form $U$.  From this launching point, they ask the question: if $A$ is approximately low rank, then how can one obtain a good CUR approximation to $A$ in the spectral norm?  However, their analysis is for general matrices $U$ rather than simply being of the form $U=A(I,J)$.  They provide precise estimates on CUR approximations in terms of a related min-max quantity.  These estimates are universal in the sense that the derived upper bound is not dependent upon the given matrix $A$.

The works of Goreinov, Zamarashkin, and Tyrtyshnikov are perhaps the modern starting point of CUR approximations, and have since sparked a significant amount of activity in the area.  Specifically, Drineas, Kannan, and Mahoney have considered a large variety of CUR approximations inspired by the analysis of \cite{Goreinov}.  They again admit flexibility in the choice of the matrix $U$, and prove many relative and additive error bounds for their approximations, as well as determining algorithms for computing the approximations which are computationally cheap,  \cite{Bien,DKMIII,DM05,DMM08,DMPNAS,VoroninMartinsson}.  We leave the discussion of their exact approximations to the survey in Section \ref{SEC:LiteratureSurvey}, but note here that a typical result in these works quantifies how well a CUR approximation with randomly oversampled columns and rows approximates the truncated SVD up to some penalty.  Similar work has been done by Chiu and Demanet \cite{DemanetWu} for uniform sampling of columns and rows with an additional coherency assumption on the given matrix $A$.  These works will be discussed in more detail in the sequel.

Applications of CUR approximations have become prevalent, including works on astronomical object detection \cite{Yip14}, mass spectrometry imaging \cite{Yang15}, matrix completion \cite{Xu15}, the joint learning problem \cite{CURTPAMI}, and subspace clustering \cite{AHKS}.

A very general framework for randomized low-rank matrix approximations is given in the excellent work of Halko, Martinsson, and Tropp \cite{tropp}, wherein they discuss CUR approximations as well as related methods such as interpolative decompositions and randomized approximations to the SVD.

The Column Subset Selection Problem (CSSP) has been well-studied in the theoretical computer science and randomized linear algebra literature \cite{AltschulerGreedyCSSP, Boutsidis2009, Deshpande2010, LiDeterministicCSSP, OrdozgoitiCSSP, tropp2009column, Yang}.  Indeed as a dimension reduction tool for data analysis, the CSSP is completely natural.  Many such methods attempt to represent given data in terms of a basis of reduced dimension which capture the essential information of the data.  Whereas Principal Component Analysis may result in a loss of interpretability as mentioned before, column selection corresponds to choosing actual columns of the data, and hence the minimization in the CSSP is attempting to find the best features that capture the most information of the data.  Feature selection as a preconditioner to task-based machine learning algorithms -- e.g. neural network or support vector machine classifiers -- is a critical step in many applications, and thus a thorough understanding of the CSSP is important for the analysis of data.  Column selection has also been applied in drawing large graphs \cite{KhouryScheidegger}.

As for complexity, the CSSP is believed to be NP--hard, with a purported proof given by Shitov \cite{Shitov}; a proof of its UG--hardness was given already by \c{C}ivril \cite{Civril}.  UG--hardness is a relaxed notion which states that a problem is NP--hard assuming the \textit{Unique Games Conjecture} (see \cite{Khot} for a formal description).

\section{Perturbation Analysis for CUR Approximations}\label{SEC:Perturbation}

We now turn to a perturbation analysis suggested by the CUR approximations described above.  Our primary task will be to consider matrices of the form \[\tilde{A} = A+E,\] where $A$ has low rank $k$, and $E$ is a (generally) full rank noise matrix.  For experimentation in the sequel we will consider $E$ to be a random matrix drawn from a certain distribution, but here we do not make any assumption on its entries.  We are principally interested in the case that $E$ is ``small" in a suitable sense, and so the observed matrix $\tilde{A}$ is really a small perturbation of the low rank matrix $A$.  To this end, most of our analysis will contain upper bounds on a CUR approximation of $\tilde{A}$ in terms of a norm of the noise $E$.

Since we now well-understand how a CUR \textit{decomposition} of $A$ behaves, we would like to utilize this understanding to tell us something about how a CUR \textit{approximation} of $\tilde{A}$ behaves.  To set some notation, if $\tilde{A}=A+E$, and we consider $\tilde{C}=\tilde{A}(:,J)$, $\tilde{R}=\tilde{A}(I,:)$, and $\tilde{U}=\tilde{A}(I,J)$ for some index sets $I$ and $J$, then we write 
\begin{equation}\label{EQ:tildes}
\tilde{C} = A(:,J)+E(:,J) =: C+E(:,J),\quad \tilde{R} = R+E(I,:),\quad \tilde{U} = U+E(I,J),
\end{equation}
where $R:=A(I,:)$ and $U:=A(I,J)$.
Thus if we choose columns and rows, $\tilde{C}$ and $\tilde{R}$ of $\tilde{A}$, we would like to determine how this compares to the underlying approximation of the low rank matrix $A$ by its columns and rows, $C$ and $R$.
Essentially all of our results in the sequel will be of the form
\[ \|A-\tilde{C}\tilde{U}^\dagger\tilde{R}\|\leq \|A-CU^\dagger R\|+O(\|E\|).\]
The work then will be to determine what the $O(\|E\|)$ error term is and to estimate the likelihood that $\|A-CU^\dagger R\|=0$.

For ease of notation, we will use the conventions that $E_I:=E(I,:)$, $E_J:=E(:,J)$, and $E_{I,J}:=E(I,J)$; since $I$ and $J$ are always reserved for subsets of the rows and columns, respectively, we trust this will not cause confusion.

\subsection{Preliminaries from Matrix Perturbation Theory}

Before stating our results, we collect some useful facts from perturbation theory. The first is due to Weyl:

\begin{theorem}\cite[Corollary 8.6.2.]{GolubVanLoan} \label{THMHoffmanWielandt}
	If $B,E\in\K^{m\times n}$ and $\tilde{B}=B+E$, then for $1\leq j\leq \min\{m,n\}$,
	\begin{equation}
	\left|\sigma_j(B)-\sigma_j(\tilde{B})\right|\leq \sigma_1(E)=\|E\|_2.
	\end{equation}
\end{theorem}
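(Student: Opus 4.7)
The plan is to reduce the inequality to the Courant--Fischer min-max characterization of the singular values. Specifically, I will use the fact that for any $B \in \K^{m \times n}$,
\[ \sigma_j(B) = \max_{\substack{S \leq \K^n \\ \dim S = j}} \min_{\substack{x \in S \\ \|x\|_2 = 1}} \|Bx\|_2, \]
which is the standard variational characterization (equivalently obtainable from the SVD $B = W_B \Sigma_B V_B^*$ by tracking how the first $j$ right singular vectors interact with a candidate subspace).

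First I would observe that for any unit vector $x \in \K^n$, the triangle inequality together with the definition of the spectral norm gives
\[ \bigl| \|\tilde B x\|_2 - \|B x\|_2 \bigr| \leq \|E x\|_2 \leq \|E\|_2. \]
This is the only inequality that actually uses the hypothesis $\tilde B = B + E$; everything else is bookkeeping with the min-max formula.

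Next I would plug this pointwise bound into the variational formula. Fixing a $j$-dimensional subspace $S$, taking the minimum of the two-sided inequality over unit $x \in S$ preserves the bounds up to $\|E\|_2$, and then taking the maximum over $j$-dimensional $S$ yields
\[ \sigma_j(B) - \|E\|_2 \leq \sigma_j(\tilde B) \leq \sigma_j(B) + \|E\|_2. \]
Rearranging gives $|\sigma_j(B) - \sigma_j(\tilde B)| \leq \|E\|_2 = \sigma_1(E)$, which is the claim.

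There is no genuine obstacle here: the only subtlety is making sure that the min-max formula is applied symmetrically, so I would take care to note that the argument is valid for all $1 \leq j \leq \min\{m,n\}$ (the range in which the formula makes sense), and that the identification $\|E\|_2 = \sigma_1(E)$ is simply the well-known fact recorded earlier in Section \ref{SEC:Notation}. As an alternative route, one could instead apply Weyl's eigenvalue perturbation inequality to the Hermitian dilation $\begin{bmatrix} 0 & B \\ B^* & 0 \end{bmatrix}$ whose eigenvalues are $\pm \sigma_j(B)$, but the min-max argument above is more direct.
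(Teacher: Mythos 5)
Your proof is correct: the pointwise bound $\bigl|\|\tilde Bx\|_2-\|Bx\|_2\bigr|\leq\|Ex\|_2\leq\|E\|_2$ for unit vectors, pushed through the min-max characterization of $\sigma_j$, gives both one-sided inequalities and hence the claim. The paper does not prove this statement at all --- it is quoted from \cite{GolubVanLoan} --- and your argument is essentially the standard textbook derivation there (their Corollary 8.6.2 follows from their minimax theorem for singular values in exactly this way), so there is nothing to reconcile.
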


Note that Theorem \ref{THMHoffmanWielandt} holds in greater generality and is due to Mirsky \cite{Mirsky}.  Therein, it was shown that for any normalized, uniformly generated, unitarily invariant norm $\|\cdot\|$, \[\|\textnormal{diag}(\sigma_1(B)-\sigma_1(\tilde{B}),\dots)\|\leq\|E\|.\]  We will have occasion to use this estimate in the sequel.

The following Theorem of Stewart provides an estimate for how large the difference of pseudo-inverses can be.  

\begin{theorem}\cite[Theorems 3.1--3.4]{Stewart_1977}\label{THM:Stewart}
	Let $\|\cdot\|$ be any normalized, uniformly generated, unitarily invariant norm on $\K^{m\times n}$.  For any $B,E\in\K^{m\times n}$ with $\tilde{B}=B+E$, if $\rank(\tilde{B})=\rank(B)$, then
	\[
	\|B^{\dagger}-\tilde{B}^{\dagger} \|\leq\mu \|\tilde{B}^{\dagger}\|_2\|B^{\dagger}\|_2\|E\|,
	\]
	where $1\leq\mu\leq 3$ is a constant depending only on the norm.
    
    If $\rank(\tilde{B})\neq\rank(B)$, then
    \[ \|B^\dagger-\tilde{B}^\dagger\|\leq\mu\max\{\|\tilde{B}^\dagger\|_2^2,\|B^\dagger\|_2^2\}\|E\| \text{ and } 1/\|E\|_2\leq\|B^\dagger-\tilde{B}^\dagger\|_2. \]
\end{theorem}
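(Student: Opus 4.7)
The plan is to prove Stewart's theorem by deriving an algebraic identity for $\tilde{B}^\dagger - B^\dagger$ that plays the role of the resolvent identity $\tilde{B}^{-1} - B^{-1} = -\tilde{B}^{-1} E B^{-1}$ in the invertible case, and then bounding each term using the norm properties (unitary invariance, submultiplicativity, and the fact that a normalized, uniformly generated norm dominates $\|\cdot\|_2$). First I would write down the Wedin-style identity
\[
\tilde{B}^\dagger - B^\dagger \;=\; -\tilde{B}^\dagger E B^\dagger \;+\; \tilde{B}^\dagger (\tilde{B}^\dagger)^* E^* (I - BB^\dagger) \;+\; (I - \tilde{B}^\dagger \tilde{B})\, E^* (B^\dagger)^* B^\dagger,
\]
and verify it by a direct calculation using the four Moore--Penrose identities together with $\tilde{B} - B = E$. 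This requires only pushing $E$ through products of the form $\tilde{B}^\dagger \tilde{B} \tilde{B}^\dagger$ and $B^\dagger B B^\dagger$ and collecting terms; it is the single algebraic workhorse of the proof.

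Next I would apply submultiplicativity and unitary invariance term-by-term. Since $\|P\|_2 \le 1$ for the orthogonal projections $I - BB^\dagger$ and $I - \tilde{B}^\dagger \tilde{B}$, and $\|(A^\dagger)^*\|_2 = \|A^\dagger\|_2$, the three terms admit the bounds $\|\tilde{B}^\dagger\|_2\|B^\dagger\|_2\|E\|$, $\|\tilde{B}^\dagger\|_2^2\|E\|$, and $\|B^\dagger\|_2^2\|E\|$ respectively. Summing gives a bound with constant at most $3\max\{\|\tilde{B}^\dagger\|_2^2,\|B^\dagger\|_2^2\}\|E\|$, which already yields the unequal-rank upper bound. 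In the equal-rank case, the refinement to $\|\tilde{B}^\dagger\|_2\|B^\dagger\|_2$ requires showing that the two quadratic-looking terms can actually be bounded by $\|\tilde{B}^\dagger\|_2\|B^\dagger\|_2\|E\|$. The key observation is that when $\rank(\tilde{B}) = \rank(B)$, one has $(I-BB^\dagger)\tilde{B} = (I-BB^\dagger)E$, which converts one factor of $\|\tilde{B}^\dagger\|_2$ in the second term into a factor of $\|B^\dagger\|_2$ after using $\tilde{B}^\dagger = \tilde{B}^\dagger \tilde{B}\tilde{B}^\dagger$ and then pushing a projection through; a symmetric argument handles the third term. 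This is where I expect the bookkeeping to be most delicate, because one must carefully use the hypothesis on ranks to identify the range and coimage geometry of $\tilde{B}$ with that of $B$ up to an $O(\|E\|)$ rotation.

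For the lower bound $1/\|E\|_2 \le \|B^\dagger - \tilde{B}^\dagger\|_2$ in the unequal-rank case, I would use the classical discontinuity of the pseudoinverse at rank jumps: pick a unit vector in the subspace where the ranks disagree (say $v \in \mathcal{R}(\tilde{B}^\dagger)\setminus\mathcal{R}(B^\dagger)$ after normalization), observe that $B^\dagger \tilde{B} v$ and $\tilde{B}^\dagger \tilde{B} v = v$ differ by a unit-norm quantity, and then estimate $\|(\tilde{B}^\dagger - B^\dagger)\tilde{B} v\|$ from below by $1$ while $\|\tilde{B} v\|$ can be bounded by $\|E\|_2$ when $v$ is chosen to lie in $\mathcal{N}(B)$. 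Taking the ratio gives the claimed lower bound.

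The main obstacle will be the equal-rank refinement described in the second paragraph: the raw Wedin identity naturally produces $\|\tilde{B}^\dagger\|_2^2$ and $\|B^\dagger\|_2^2$ factors, and extracting the symmetric product $\|\tilde{B}^\dagger\|_2\|B^\dagger\|_2$ with the small constant $\mu \le 3$ requires either (i) a careful rewriting in which projection identities like $(I-BB^\dagger)\tilde{B}\tilde{B}^\dagger = (I-BB^\dagger)\tilde{B}\tilde{B}^\dagger(I-BB^\dagger) + \text{lower order}$ are exploited, or (ii) a path-integration argument $B(t) = B + tE$ for $t\in[0,1]$ that avoids rank drops under the equal-rank hypothesis. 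Either route is technically the crux; the rest of the proof is essentially norm estimation once the right identity is in hand.
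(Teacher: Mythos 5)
You should first be aware that the paper does not prove this statement at all: Theorem \ref{THM:Stewart} is imported verbatim from Stewart's 1977 survey (Theorems 3.1--3.4 of \cite{Stewart_1977}) and used as a black box, so there is no internal proof to compare against. What you have written is essentially a reconstruction of the Stewart--Wedin argument itself, and the skeleton is correct. Your three-term identity is the standard Wedin decomposition and does verify directly: starting from $\tilde{B}^\dagger - B^\dagger = -\tilde{B}^\dagger E B^\dagger + \tilde{B}^\dagger(I-BB^\dagger) - (I-\tilde{B}^\dagger\tilde{B})B^\dagger$, one uses $B^*(I-BB^\dagger)=0$ and $(I-\tilde{B}^\dagger\tilde{B})\tilde{B}^*=0$ to convert the last two terms into the forms you wrote, with exactly your signs. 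The term-by-term bounds via $\|XY\|\leq\|X\|_2\|Y\|$ and $\|E^*\|=\|E\|$ then give the unequal-rank upper bound with $\mu=3$, and your lower-bound sketch also closes: taking (w.l.o.g.\ $\rank\tilde{B}>\rank B$) a unit $v\in\mathcal{N}(B)\cap\mathcal{N}(\tilde{B})^\perp$, one gets $\tilde{B}v=Ev$, $(\tilde{B}^\dagger-B^\dagger)\tilde{B}v=v-B^\dagger Ev$, and since $B^\dagger Ev\in\mathcal{N}(B)^\perp$ is orthogonal to $v$ this vector has norm at least $1$, giving $\|\tilde{B}^\dagger-B^\dagger\|_2\geq 1/\|E\|_2$.

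The one place your plan is under-specified is the equal-rank refinement, and it is worth naming the lemma you actually need there: writing $P_B=BB^\dagger$ and $P_{\tilde{B}}=\tilde{B}\tilde{B}^\dagger$, your second term equals $\tilde{B}^\dagger P_{\tilde{B}}(I-P_B)$, and the trade of a factor $\|\tilde{B}^\dagger\|_2$ for $\|B^\dagger\|_2$ comes from the fact that \emph{two orthogonal projections of equal rank satisfy} $\|P_{\tilde{B}}(I-P_B)\|=\|(I-P_{\tilde{B}})P_B\|$ for every uniformly generated unitarily invariant norm (both sides have the sines of the canonical angles as their nonzero singular values), after which $(I-P_{\tilde{B}})P_B=-(I-P_{\tilde{B}})EB^\dagger$ gives the bound $\|B^\dagger\|_2\|E\|$; the third term is handled symmetrically with the null-space projections. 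This is precisely where $\rank(\tilde{B})=\rank(B)$ enters, and without it the equality of canonical angles fails, which is consistent with the theorem's dichotomy. One last caveat: the sharper constants quoted after the theorem ($\mu=\sqrt{2}$ for Frobenius, the golden ratio for spectral) do not follow from summing the three bounds; they require exploiting that the three terms have pairwise orthogonal domains or ranges. But since the statement only asserts $1\leq\mu\leq 3$, your triangle-inequality version suffices for the theorem as stated.
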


The precise value of $\mu$ depends on the norm used and the relation of the rank of the matrices to their size; in particular, $\mu=3$ for an arbitrary norm satisfying the hypotheses in Section \ref{SEC:Notation}, whereas $\mu=\sqrt{2}$ for the Frobenius norm, and $\mu=\frac{1+\sqrt{5}}{2}$ (the Golden Ratio) for the spectral norm.

The preceding theorems yield the following immediate corollary.

\begin{corollary}\label{COR:AdaggerBounds}
	With the assumptions of Theorem \ref{THM:Stewart}, if $\tilde{B}=B+E$ and $\rank(\tilde{B})=\rank(B)=k$, then
	\[|\|B^{\dagger}\|-\|\tilde{B}^{\dagger}\||\leq \mu\|B^\dagger\|_2\|\tilde{B}^{\dagger}\|_2\|E\|.\]
Moreover, if $\sigma_k(B)>\mu\|E\|$, then
	\[\frac{\|B^\dagger\|}{1+\mu\|B^\dagger\|_2\|E\|}\leq\|\tilde{B}^{\dagger}\|\leq \frac{\|B^\dagger\|}{1-\mu\|B^\dagger\|_2\|E\|}.\]
\end{corollary}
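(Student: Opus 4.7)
The plan is to derive both statements as direct consequences of Theorem~\ref{THM:Stewart} combined with the reverse triangle inequality and the standard dominance $\|\cdot\|_2\leq\|\cdot\|$ noted in Section~\ref{SEC:Notation}.

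For the first displayed inequality, since the hypothesis provides $\rank(\tilde{B})=\rank(B)$, the rank-preserving case of Theorem~\ref{THM:Stewart} applies and gives $\|B^\dagger-\tilde{B}^\dagger\|\le \mu\|\tilde{B}^\dagger\|_2\|B^\dagger\|_2\|E\|$. Combining this with the reverse triangle inequality $|\|B^\dagger\|-\|\tilde{B}^\dagger\||\le\|B^\dagger-\tilde{B}^\dagger\|$ yields the first bound immediately.

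For the moreover part, I would first weaken the right-hand side of the just-obtained estimate using the inequality $\|\tilde{B}^\dagger\|_2\le\|\tilde{B}^\dagger\|$ (valid because the ambient norm is normalized and uniformly generated) to get $|\|B^\dagger\|-\|\tilde{B}^\dagger\||\le\mu\|B^\dagger\|_2\|\tilde{B}^\dagger\|\|E\|$. Splitting this absolute value into its two cases then reduces the proof to two linear rearrangements. The inequality $\|B^\dagger\|-\|\tilde{B}^\dagger\|\le \mu\|B^\dagger\|_2\|\tilde{B}^\dagger\|\|E\|$ factors as $\|B^\dagger\|\le (1+\mu\|B^\dagger\|_2\|E\|)\|\tilde{B}^\dagger\|$, giving the lower bound on $\|\tilde{B}^\dagger\|$. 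The opposite direction $\|\tilde{B}^\dagger\|-\|B^\dagger\|\le\mu\|B^\dagger\|_2\|\tilde{B}^\dagger\|\|E\|$ rearranges to $(1-\mu\|B^\dagger\|_2\|E\|)\|\tilde{B}^\dagger\|\le\|B^\dagger\|$, which yields the upper bound after one divides by the coefficient on the left; preserving the direction of the inequality in that division requires $1-\mu\|B^\dagger\|_2\|E\|>0$. Since $\|B^\dagger\|_2=1/\sigma_k(B)$, this positivity is exactly the hypothesis $\sigma_k(B)>\mu\|E\|$, and this is the sole place that hypothesis enters the argument.

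There is essentially no substantive obstacle; the corollary is bookkeeping on top of Stewart's theorem. The only subtlety is the mismatch between the spectral norm that appears naturally on the right-hand side of Stewart's bound and the general unitarily invariant norm in which one wants to measure $\|\tilde{B}^\dagger\|$. Resolving this via $\|\cdot\|_2\le\|\cdot\|$ is exactly what produces the asymmetric appearance of $\|B^\dagger\|_2$ (rather than $\|B^\dagger\|$) in the denominators of the final bounds.
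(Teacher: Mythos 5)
Your proof is correct and is essentially the derivation the paper has in mind: the authors state the corollary without proof as an immediate consequence of Theorem \ref{THM:Stewart}, and your route (reverse triangle inequality for the first bound, then $\|\tilde{B}^\dagger\|_2\le\|\tilde{B}^\dagger\|$ and rearrangement for the two-sided bound, with $\sigma_k(B)>\mu\|E\|$ entering only to keep $1-\mu\|B^\dagger\|_2\|E\|$ positive) is exactly the intended bookkeeping. The only cosmetic quibble is that the two inequalities hidden in the absolute value are not alternative ``cases'' but hold simultaneously; your argument uses them that way, so nothing is affected.
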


Regard that from the representation of $B^\dagger$ in terms of the SVD of $B$ mentioned in Section \ref{SEC:Notation}, we have $\|B^\dagger\|_2=1/\sigma_{\min}(B)$, where $\sigma_{\min}(B)$ is the smallest nonzero singular value of $B$; this is sometimes how the inequalities in Corollary \ref{COR:AdaggerBounds} are written.

\subsection{Perturbation Estimates for CUR Approximations}

To begin, let us consider the CUR approximation suggested by the exact decomposition of Theorem \ref{THM:CUR}.  The following proposition will be useful in estimating some of the terms that arise in the subsequent analysis.
 
 \begin{proposition}\label{PROP:NormTerms}
Suppose that $A, C, U$, and $R$ are as in Theorem \ref{THM:CUR} such that $A=CU^\dagger R$, and suppose that $\rank(A)=k$.  Let $A=W_k\Sigma_k V_k^*$ be the truncated SVD of $A$.  Then for any unitarily invariant norm $\|\cdot\|$ on $\K^{m\times n}$, we have
\[ \|CU^\dagger\|= \|W_{k,I}^\dagger\|,\quad \text{and} \quad \|U^\dagger R\|= \|V_{k,J}^\dagger\|,\]
where $W_{k,I}:=W_k(I,:)$ and $V_{k,J}:=V_k(J,:)$.
\end{proposition}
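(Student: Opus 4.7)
The plan is to use the truncated SVD to write each of $C$, $R$, and $U$ as a product of three matrices, compute the pseudoinverse of $U$ by a reverse-order identity, and then reduce the norms to the two claimed quantities using unitary invariance. Concretely, writing $A = W_k\Sigma_k V_k^*$ and using the row/column restriction notation $W_{k,I}$ and $V_{k,J}$, I would first record the factorizations
\begin{equation*}
C = W_k\Sigma_k V_{k,J}^*,\qquad R = W_{k,I}\Sigma_k V_k^*,\qquad U = W_{k,I}\Sigma_k V_{k,J}^*.
\end{equation*}

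Since $\rank(A)=\rank(U)=k$, Theorem \ref{THM:Characterization} forces $\rank(C)=\rank(R)=k$, and chasing this through the factorizations forces $W_{k,I}$ and $V_{k,J}$ each to have full column rank $k$. Next I would invoke the standard reverse-order formula for the Moore--Penrose pseudoinverse of a full-rank product: if $B$ has full column rank and $D$ has full row rank, then $(BD)^\dagger = D^\dagger B^\dagger$. Applied twice (first peeling $W_{k,I}$ from $\Sigma_k V_{k,J}^*$, then peeling $\Sigma_k$ from $V_{k,J}^*$, using $\Sigma_k$ invertible and the identity $(M^*)^\dagger = (M^\dagger)^*$) this yields
\begin{equation*}
U^\dagger = (V_{k,J}^\dagger)^*\,\Sigma_k^{-1}\,W_{k,I}^\dagger.
\end{equation*}

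Now I would multiply. For $CU^\dagger$, the middle factors simplify because $V_{k,J}^*(V_{k,J}^\dagger)^* = (V_{k,J}^\dagger V_{k,J})^* = I_k$ (full column rank of $V_{k,J}$), collapsing the expression to $CU^\dagger = W_k W_{k,I}^\dagger$. The analogous computation using $W_{k,I}^\dagger W_{k,I} = I_k$ gives $U^\dagger R = (V_{k,J}^\dagger)^* V_k^*$. Finally I would finish using unitary invariance of $\|\cdot\|$: because $W_k$ has orthonormal columns, $W_k W_{k,I}^\dagger$ and $W_{k,I}^\dagger$ share the same singular values (the Gram matrix $(W_kW_{k,I}^\dagger)^*(W_kW_{k,I}^\dagger)$ equals $(W_{k,I}^\dagger)^*W_{k,I}^\dagger$), so $\|CU^\dagger\| = \|W_{k,I}^\dagger\|$; combining the same trick with the identity $\|M^*\|=\|M\|$ for unitarily invariant norms yields $\|U^\dagger R\| = \|V_{k,J}^\dagger\|$.

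The only step that requires genuine care is the application of the reverse-order identity for $U^\dagger$: the formula $(BD)^\dagger = D^\dagger B^\dagger$ fails for general $B,D$, so I would explicitly verify the full-rank hypotheses at each peel (they follow from the rank assumption plus invertibility of $\Sigma_k$). Everything else is bookkeeping with orthonormal columns and the symmetry of unitarily invariant norms under adjoints, which I expect to cause no difficulty.
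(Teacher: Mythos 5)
Your proof is correct, and it reaches the same two identities $CU^\dagger = W_kW_{k,I}^\dagger$ and $U^\dagger R = (V_{k,J}^\dagger)^*V_k^*$ that underlie the paper's argument, but by a slightly different route. The paper first converts $CU^\dagger$ into $AR^\dagger$ (and $U^\dagger R$ into $C^\dagger A$) by invoking Proposition \ref{PROP:Udagger} together with the projection identities $CC^\dagger = AA^\dagger$ and $R^\dagger R = A^\dagger A$ of Lemma \ref{LEM:Projections}, and only then applies the reverse-order pseudoinverse formula to the factorization $R = W_{k,I}\Sigma_kV_k^*$ (where the relevant factor $V_k^*$ has orthonormal rows, so its pseudoinverse is simply $V_k$). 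You instead pseudoinvert $U = W_{k,I}\Sigma_kV_{k,J}^*$ directly, which requires the extra care you correctly flag --- verifying that $V_{k,J}$ has full column rank so that $V_{k,J}^*(V_{k,J}^\dagger)^* = I_k$ in the cancellation --- but in exchange bypasses Proposition \ref{PROP:Udagger} and Lemma \ref{LEM:Projections} entirely, making the proof more self-contained. Both arguments finish identically: since the norm is uniformly generated, equality of the nonzero singular values of $W_kW_{k,I}^\dagger$ and $W_{k,I}^\dagger$ (your Gram-matrix observation; the paper's padding argument with $W_k = WP$) gives equality of norms. Note only that for a norm that is merely unitarily invariant on a single space $\K^{m\times n}$ this last step needs the uniformly-generated convention of Section \ref{SEC:Notation}, since $W_kW_{k,I}^\dagger$ and $W_{k,I}^\dagger$ live in different matrix spaces; your singular-value argument is exactly what that convention licenses.
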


\begin{proof}
First, note that by Proposition \ref{PROP:Udagger} and the fact that $CC^\dagger=AA^\dagger$ (Lemma \ref{LEM:Projections}), we have
\[ CU^\dagger = CC^\dagger AR^\dagger = AA^\dagger AR^\dagger = AR^\dagger,\] and likewise
\[ U^\dagger R = C^\dagger A.\]
As noted in Proposition \ref{PROP:U=RAC}, we have that \[R = W_{k}(I,:)\Sigma_{k}V_{k}^*=:W_{k,I}\Sigma_{k}V_{k}^*.\] Consequently, \[AR^\dagger = W_k\Sigma_k V_k^*(W_{k,I}\Sigma_{k}V_{k}^*)^\dagger. \]
To estimate the norm, let us first notice that the pseudoinverse in question turns out to satisfy
\[(W_{k,I}\Sigma_{k}V_{k}^*)^\dagger = (V_{k}^*)^\dagger\Sigma_{k}^{-1} W_{k,I}^\dagger.\]
This is true on account of the fact that $W_{k,I}$ has full column rank, $V_{k}^*$ has orthonormal rows, and $\Sigma_{k}$ is invertible by assumption.  Next, note that since $V_k^*$ has orthonormal rows, $(V_k^*)^\dagger = V_k$.  Putting these observations together, we have that

\begin{align}\label{EQARNorm}
\|AR^\dagger\| & = \|W_k\Sigma_k V_k^*(W_{k,I}\Sigma_{k}V_{k}^*)^\dagger\| \nonumber\\
& = \|\Sigma_kV_k^*V_{k}\Sigma_{k}^{-1} W_{k,I}^\dagger\| \nonumber\\
& = \|\Sigma_k\Sigma_{k}^{-1}W_{k,I}^\dagger\| \nonumber\\
& = \|W_{k,I}^\dagger\|.
\end{align} 
The second equality follows from the unitary invariance of the norm in question; to see this, write $W_k = WP$, where $W$ is the $m\times m$ orthonormal basis from the full SVD of $A$, and $P = \begin{bmatrix}I_{k\times k}\\ 0\end{bmatrix}$; subsequently, the norm in question will be the norm of $\begin{bmatrix}W_{k,I}^\dagger\\ 0\end{bmatrix}$, which is $\|W_{k,I}^\dagger\|$.  A word of caution: Equation \eqref{EQARNorm} is not true if $W_{k,I}$ is replaced by $W_{A,I}$ the row submatrix of the full left singular vector matrix of $A$.

By a directly analogous calculation, we have that
\[
\|C^\dagger A\|= \|(V_{k,J}^*)^\dagger\|,
\]
whereupon the conclusion follows from the fact that $(V_{k,J}^*)^\dagger = (V_{k,J}^\dagger)^*$, which has the same norm as $V_{k,J}^\dagger$.
\end{proof}

Unfortunately, it is difficult to say much about the norms of pseudoinverses of submatrices of the truncated SVD of a matrix; however, we will give some indications later of some universal bounds that can be used in certain cases.

The following proposition gives a first estimate of the performance of the CUR approximation suggested by the exact CUR decomposition of Theorem \ref{THM:CUR} in terms of the underlying CUR decomposition of $A$.

\begin{proposition}\label{PROP:Perturbation}
Let $\tilde{A}=A+E$ for a fixed but arbitrary $E\in\K^{m\times n}$. Suppose that $A,C,U,R,\tilde{C},\tilde{U},$ and $\tilde{R}$ are given as in \eqref{EQ:tildes}.  For any submultiplicative norm $\|\cdot\|$ on $\K^{m\times n}$,
\begin{multline*} \|A-\tilde{C}\tilde{U}^{\dagger}\tilde{R}\|\leq \|A-CU^\dagger R\|+\|C\tilde{U}^\dagger\|\|E_I\|+\|\tilde{U}^\dagger \tilde{R}\|\|E_J\| \\ +\|CU^\dagger\|\|U^\dagger R\|\|E_{I,J}\|(3+\|\tilde{U}^\dagger\|\|E_{I,J}\|).\end{multline*}
\end{proposition}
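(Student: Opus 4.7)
My plan proceeds in two stages. First, using $\tilde C = C + E_J$ and $\tilde R = R + E_I$, I would expand
\[
\tilde C\,\tilde U^\dagger\,\tilde R \;=\; C\,\tilde U^\dagger\,R + C\,\tilde U^\dagger\,E_I + E_J\,\tilde U^\dagger\,\tilde R
\]
(grouping $E_J\tilde U^\dagger R + E_J\tilde U^\dagger E_I = E_J\tilde U^\dagger \tilde R$), subtract from $A$, and apply the triangle inequality and submultiplicativity to obtain
\[
\|A - \tilde C\,\tilde U^\dagger\,\tilde R\| \;\le\; \|A - C\,\tilde U^\dagger\,R\| + \|C\tilde U^\dagger\|\,\|E_I\| + \|\tilde U^\dagger\tilde R\|\,\|E_J\|.
\]
This already accounts for the second and third summands of the claimed bound. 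Inserting $\pm CU^\dagger R$ into the first summand then reduces the remaining work to proving
\[
\|C(\tilde U^\dagger - U^\dagger)R\| \;\le\; \|CU^\dagger\|\,\|U^\dagger R\|\,\|E_{I,J}\|\bigl(3 + \|\tilde U^\dagger\|\,\|E_{I,J}\|\bigr).
\]

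For this pseudoinverse estimate I plan to use a telescoping identity derived from the Moore--Penrose relations $\tilde U^\dagger = \tilde U^\dagger \tilde U\tilde U^\dagger$ and $U^\dagger = U^\dagger UU^\dagger$. Adding and subtracting the mixed terms $U^\dagger \tilde U\tilde U^\dagger$ and $U^\dagger U\tilde U^\dagger$ to $\tilde U^\dagger - U^\dagger$ yields the identity
\[
\tilde U^\dagger - U^\dagger \;=\; (\tilde U^\dagger - U^\dagger)\,\tilde U\tilde U^\dagger \;+\; U^\dagger E_{I,J}\,\tilde U^\dagger \;+\; U^\dagger U\,(\tilde U^\dagger - U^\dagger),
\]
which isolates an explicit middle piece carrying $E_{I,J}$ sandwiched between two copies of a $U^\dagger$-type operator. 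Pre- and post-multiplying by $C$ and $R$, the middle contribution is bounded by $\|CU^\dagger\|\,\|E_{I,J}\|\,\|\tilde U^\dagger R\|$; writing $\tilde U^\dagger R = U^\dagger R + (\tilde U^\dagger - U^\dagger)R$ and bounding $\|(\tilde U^\dagger - U^\dagger)R\|$ by $\|\tilde U^\dagger\|\,\|E_{I,J}\|\,\|U^\dagger R\|$ produces one clean copy of $\|CU^\dagger\|\|U^\dagger R\|\|E_{I,J}\|$ together with the quadratic residual $\|CU^\dagger\|\|U^\dagger R\|\|\tilde U^\dagger\|\|E_{I,J}\|^2$. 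The two outer pieces $C(\tilde U^\dagger - U^\dagger)\tilde U\tilde U^\dagger R$ and $CU^\dagger U(\tilde U^\dagger - U^\dagger)R$ are then treated by a second application of the same identity (and its left--right mirror obtained from $\tilde U^\dagger \tilde U U^\dagger$-style expansions), each of which exposes a fresh $E_{I,J}$ factor sandwiched by $U^\dagger$'s and contributes one further copy of $\|CU^\dagger\|\|U^\dagger R\|\|E_{I,J}\|$. Summing the three linear-order contributions yields the coefficient $3$.

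The main obstacle is preserving the precise structure $\|CU^\dagger\|\cdot\|U^\dagger R\|$ throughout the iteration, rather than producing the looser bound $\|C\|\cdot\|U^\dagger\|^2\cdot\|R\|\cdot\|E_{I,J}\|$ that a direct application of Stewart's perturbation theorem to $\|\tilde U^\dagger - U^\dagger\|$ would give. The telescoping identity above is engineered precisely so that every new $E_{I,J}$ factor appears sandwiched between $U^\dagger$-type operators, and submultiplicativity then keeps $C$ joined to a leading $U^\dagger$ and a trailing $U^\dagger$ joined to $R$. A second subtle point is that the iteration must close with only a single quadratic residual: this uses the Moore--Penrose relations $U^\dagger UU^\dagger = U^\dagger$ and $\tilde U^\dagger\tilde U\tilde U^\dagger = \tilde U^\dagger$ to absorb the higher-order cascade into the single factor $\|\tilde U^\dagger\|\|E_{I,J}\|^2$. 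Once the three linear-order contributions and this quadratic residual are assembled, the desired bound follows.
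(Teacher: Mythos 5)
Your Stage 1 is fine and agrees with the paper: expanding $\tilde C\tilde U^\dagger\tilde R=(C+E_J)\tilde U^\dagger(R+E_I)$ and applying the triangle inequality correctly produces the terms $\|C\tilde U^\dagger\|\|E_I\|$ and $\|\tilde U^\dagger\tilde R\|\|E_J\|$ and reduces the problem to bounding $\|C(\tilde U^\dagger-U^\dagger)R\|$. The gap is in Stage 2. First, your intermediate claim $\|(\tilde U^\dagger-U^\dagger)R\|\leq\|\tilde U^\dagger\|\|E_{I,J}\|\|U^\dagger R\|$ is not justified and is false in general: writing $R=UU^\dagger R$ and $\tilde U^\dagger U=\tilde U^\dagger\tilde U-\tilde U^\dagger E_{I,J}$ gives
\begin{equation*}
(\tilde U^\dagger-U^\dagger)R=-(I-\tilde U^\dagger\tilde U)U^\dagger R-\tilde U^\dagger E_{I,J}U^\dagger R,
\end{equation*}
and the first summand is not controlled by your claimed quantity; handling it requires the separate identity $(I-\tilde U^\dagger\tilde U)U^*=-(I-\tilde U^\dagger\tilde U)E_{I,J}^*$ and brings in $\|U^\dagger\|$ rather than $\|\tilde U^\dagger\|$, changing the final constants. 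Second, your iteration on the outer pieces does not close: applying your identity again inside $CU^\dagger U(\tilde U^\dagger-U^\dagger)R$ regenerates the exact same term, since $U^\dagger U\,U^\dagger U=U^\dagger U$, so the recursion is circular as described and never "exposes a fresh $E_{I,J}$" in that branch. The bookkeeping that is supposed to yield the coefficient $3$ and the single quadratic residual is therefore not actually established.

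The mechanism you are missing is the pair of identities $C=CU^\dagger U$ and $R=UU^\dagger R$ from Lemma \ref{LEM:Projections} (valid under $\rank(U)=\rank(A)$, which is the regime in which the extra term $\|A-CU^\dagger R\|$ vanishes). The paper writes $C\tilde U^\dagger R=CU^\dagger(\tilde U-E_{I,J})\tilde U^\dagger(\tilde U-E_{I,J})U^\dagger R$ and expands the middle product once; since $\tilde U\tilde U^\dagger\tilde U=\tilde U$, the difference $CU^\dagger R-C\tilde U^\dagger R$ becomes exactly four terms, each already of the form $CU^\dagger(\cdot)U^\dagger R$: one with $U-\tilde U=-E_{I,J}$, two with $E_{I,J}$ multiplied by the orthogonal projections $\tilde U\tilde U^\dagger$ or $\tilde U^\dagger\tilde U$ (whose spectral norm is at most $1$), and one quadratic term $CU^\dagger E_{I,J}\tilde U^\dagger E_{I,J}U^\dagger R$. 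This gives $\|CU^\dagger\|\|U^\dagger R\|(3\|E_{I,J}\|+\|\tilde U^\dagger\|\|E_{I,J}\|^2)$ in one pass, with no iteration and no appeal to a pseudoinverse perturbation identity. I recommend replacing your telescoping argument with this direct substitution.
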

\begin{proof}
Begin with the fact that \[ \|A-\tilde{C}\tilde{U}^\dagger \tilde{R}\|\leq \|A-CU^\dagger R\|+\|CU^\dagger R-\tilde{C}\tilde{U}^\dagger \tilde{R}\|.\]
Then we have
	\begin{eqnarray*}
	\|CU^\dagger R-\tilde{C}\tilde{U}^\dagger \tilde{R}\|&\leq&\|CU^\dagger R-C\tilde{U}^\dagger R\|+\|C\tilde{U}^\dagger R -C\tilde{U}^\dagger \tilde{R}\|+\|C\tilde{U}^\dagger \tilde{R}-\tilde{C}\tilde{U}^\dagger \tilde{R}\|\\
	&\leq&\|CU^\dagger R-C\tilde{U}^\dagger R\|+\|C\tilde{U}^\dagger\|\| R- \tilde{R}\|+\|C - \tilde{C}\|\|\tilde{U}^\dagger \tilde{R}\| \\
    & = & \|CU^\dagger R-C\tilde{U}^\dagger R\|+\|C\tilde{U}^\dagger\|\|E_I\|+\|\tilde{U}^\dagger \tilde{R}\|\|E_J\|.
	\end{eqnarray*}

To estimate the first term above, recall from Lemma \ref{LEM:Projections} that $C = CC^\dagger C = CU^\dagger U$, and likewise $R=RR^\dagger R = UU^\dagger R$.  Consequently, since $E_{I,J} = \tilde{U}-U$, the following holds:
\begin{eqnarray}\label{EQCURDiff}
	\|CU^\dagger R-C\tilde{U}^\dagger R\|&=&\|CU^\dagger UU^\dagger R-CU^\dagger(\tilde{U}-E_{I,J})\tilde{U}^\dagger(\tilde{U}-E_{I,J})U^\dagger R\| \nonumber\\
	&\leq&\|CU^\dagger(U-\tilde{U})U^\dagger R\|+\|CU^\dagger\tilde{U} \tilde{U}^\dagger E_{I,J} U^\dagger R\|+ \nonumber\\
	&&\|CU^\dagger E_{I,J}\tilde{U}^\dagger \tilde{U} U^\dagger R|+\|CU^\dagger E_{I,J}\tilde{U}^\dagger E_{I,J} U^\dagger R\|.
	\end{eqnarray}
The first term above is evidently at most $\|CU^\dagger\|\|U^\dagger R\|\|E_{I,J}\|$, whereas the second is majorized by the same quantity on account of the fact that $\tilde{U}\tilde{U}^\dagger$ is a projection.  Similarly, as $\tilde{U}^\dagger\tilde{U}$ is a projection, the third term in \eqref{EQCURDiff} is at most $\|CU^\dagger\|\|U^\dagger R\|\|E_{I,J}\|$, while the final term is at most $\|CU^\dagger\|\|U^\dagger R\|\|\tilde{U}^\dagger\|\|E_{I,J}\|^2.$  Putting these observations together, and combining \eqref{EQCURDiff} with Proposition \ref{PROP:NormTerms} yields the following:

 \begin{equation}\label{EQ:CUR-CtildeUR}
    \|CU^\dagger R-C\tilde{U}^\dagger R\| \leq \|CU^\dagger\|\|U^\dagger R\|(3\|E_{I,J}\|+\|\tilde{U}^\dagger\|\|E_{I,J}\|^2).
	\end{equation}
	Combining the estimates of \eqref{EQCURDiff} and \eqref{EQ:CUR-CtildeUR} yield the desired conclusion.
\end{proof}

Note that if columns and rows are chosen so that a valid CUR decomposition of $A$ is obtained (i.e. $A=CU^\dagger R$), then the corresponding norm term in the above proposition is 0.  Proposition \ref{PROP:Perturbation} gives only a preliminary estimate, but is also flexible since it allows the use of any submultiplicative norm.  It should be noted that while the decomposition considered here is the direct analogue of that in Theorem \ref{THM:CUR}, there is one key difference due to the presence of noise: namely that the rank of $\tilde{U}$ is typically larger than the rank of $A$ provided more than $\rank(A)$ columns or rows are chosen.  Therefore, $\tilde{C}\tilde{U}^\dagger\tilde{R}$ is an approximation of $A$ which has larger rank. It is natural to consider then what happens if the target rank is enforced.  By modifying the proof of Proposition \ref{PROP:Perturbation}, we arrive at the following.   Throughout the rest of this section, we will assume that $\rank(U)=k$ and hence $A=CU^\dagger R$; otherwise the same estimates hold with the additional term $\|A-CU^\dagger R\|$ appearing on the right-hand side.  In Section \ref{SEC:ColumnSelection} we will illustrate how columns and rows may be randomly sampled from $\tilde{A}$ to guarantee that this assumption is valid with high probability. 

\begin{proposition}\label{PROP:PB}
With the notation and assumptions of Proposition \ref{PROP:Perturbation}, suppose $\rank(A)=\rank(U)=k$, and let $\tilde{U}_k$ be the best rank-$k$ approximation of $\tilde{U}$.  Then for any submultiplicative norm $\|\cdot\|$ on $\K^{m\times n}$,
\[ \|A-\tilde{C}\tilde{U}_k^{\dagger}\tilde{R}\|\leq \|C\tilde{U}_k^\dagger\|\|E_J\|+\|\tilde{U}_k^\dagger \tilde R\|\|E_I\| +\|CU^\dagger\|\|U^\dagger R\|(3\|U-\tilde{U}_k\|+\|\tilde{U}_k^\dagger\|\|U-\tilde{U}_k\|^2).\]
\end{proposition}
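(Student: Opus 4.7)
My plan is to mirror the proof of Proposition~\ref{PROP:Perturbation}, but with the rank-$k$ truncation $\tilde U_k$ in the central slot. Because $\rank(A)=\rank(U)=k$, Theorem~\ref{THM:CUR} gives $A = CU^\dagger R$, so the triangle inequality
\[
\|A - \tilde C \tilde U_k^\dagger \tilde R\| \;\le\; \|A - CU^\dagger R\| + \|CU^\dagger R - \tilde C\tilde U_k^\dagger \tilde R\|
\]
reduces the task to controlling only the second summand.

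First I would telescope
\[
CU^\dagger R - \tilde C\tilde U_k^\dagger \tilde R
\;=\; \bigl(CU^\dagger R - C\tilde U_k^\dagger R\bigr) + C\tilde U_k^\dagger(R-\tilde R) + (C-\tilde C)\tilde U_k^\dagger \tilde R.
\]
Submultiplicativity, together with $R-\tilde R = -E_I$ and $C-\tilde C=-E_J$, immediately delivers the two linear-in-noise contributions $\|C\tilde U_k^\dagger\|\|E_I\|$ and $\|\tilde U_k^\dagger \tilde R\|\|E_J\|$ (matching, up to a likely index swap, the two linear terms in the statement).

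The substantive work is the remaining term $\|CU^\dagger R - C\tilde U_k^\dagger R\|$. Here I would invoke Lemma~\ref{LEM:Projections}: since $\rank(U)=\rank(A)$, we have $C = CU^\dagger U$ and $R = UU^\dagger R$. Writing $U = \tilde U_k - \Delta$ with $\Delta := \tilde U_k - U$ and substituting into both identities gives
\[
C\tilde U_k^\dagger R \;=\; CU^\dagger(\tilde U_k - \Delta)\,\tilde U_k^\dagger\,(\tilde U_k - \Delta)U^\dagger R,
\]
while $CU^\dagger R = CU^\dagger U U^\dagger R$. Expanding and invoking the Moore--Penrose identity $\tilde U_k \tilde U_k^\dagger \tilde U_k = \tilde U_k$, the difference $CU^\dagger R - C\tilde U_k^\dagger R$ collapses to four terms: one of the form $CU^\dagger(U-\tilde U_k)U^\dagger R$, two that contain an orthogonal projector ($\tilde U_k \tilde U_k^\dagger$ or $\tilde U_k^\dagger \tilde U_k$) sandwiched with a single factor of $\Delta$, and one with $\tilde U_k^\dagger$ sandwiched between two factors of $\Delta$. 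Bounding each by submultiplicativity, and using that the projectors have spectral norm one, yields exactly $\|CU^\dagger\|\|U^\dagger R\|\bigl(3\|U-\tilde U_k\| + \|\tilde U_k^\dagger\|\|U-\tilde U_k\|^2\bigr)$.

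The hardest part is really the careful bookkeeping in this four-term expansion. Notably, no new pseudoinverse perturbation estimate on $\tilde U_k^\dagger$ itself is required — in particular Stewart's Theorem~\ref{THM:Stewart} does \emph{not} enter — because the argument only uses the Moore--Penrose identities, which hold for $\tilde U_k$ regardless of how it is obtained from $\tilde U$, together with the unit spectral norm of the associated projectors. The only dependence on $\tilde U_k^\dagger$ appears through the single quadratic term $\|\tilde U_k^\dagger\|\|U-\tilde U_k\|^2$ coming from the outermost $\Delta$-$\Delta$ cross term in the expansion.
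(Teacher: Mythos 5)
Your proof is correct and follows essentially the same route as the paper's: the triangle inequality plus the three-term telescoping, then the identities $C=CU^\dagger U$, $R=UU^\dagger R$ from Lemma \ref{LEM:Projections} with the substitution $U=\tilde U_k+(U-\tilde U_k)$ to produce the four-term expansion bounded via the projections $\tilde U_k\tilde U_k^\dagger$ and $\tilde U_k^\dagger\tilde U_k$. Your observation that the two linear terms should read $\|C\tilde U_k^\dagger\|\|E_I\|$ and $\|\tilde U_k^\dagger\tilde R\|\|E_J\|$ (consistent with Proposition \ref{PROP:Perturbation}) is also right --- the indices in the stated bound are swapped --- and you are correct that Stewart's theorem plays no role here.
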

\begin{proof}
The proof is the same as that of Proposition \ref{PROP:Perturbation} \textit{mutatis mudandis}; we note for the reader's convenience that the replacement for the use of $E_{I,J}=\tilde{U}-U$ in the final estimations yielding \eqref{EQ:CUR-CtildeUR} is the quantity $U-\tilde{U}_k$.
\end{proof}

The presence of terms depending on $\tilde{U}$ in the error bounds above are undesirable, so we now are tasked with estimating them.  Before stating the final bound, we estimate some of the terms specifically in the following lemma.

\begin{lemma}\label{LEM:PBTerms}
With the notations and assumptions of Proposition \ref{PROP:PB}, let $W_{k,I}$ and $V_{k,J}$ as in Proposition \ref{PROP:NormTerms}.  Suppose $\|\cdot\|$ satisfies the conditions of Theorem \ref{THM:Stewart}. Provided $\sigma_k(U)>2\mu\|E_{I,J}\|$, the following estimates hold:
\begin{enumerate}[(i)]
\item\label{ITEM:UkBound} $\|\tilde{U}_k^\dagger\|\leq \dfrac{\|U^\dagger\|}{1-2\mu\|U^\dagger\|_2\|E_{I,J}\|}$,

\medskip
\item\label{ITEM:CUkBound} 
$\|C\tilde{U}_k^\dagger\|\leq \dfrac{\|U^\dagger\|}{1-2\mu\|U^\dagger\|_2\|E_{I,J}\|}(2\|E_{I,J}\|\|W_{k,I}^\dagger\|)+\|W_{k,I}^\dagger\|$,

\medskip
\item\label{ITEM:UkRBound} 
$\|\tilde{U}_k^\dagger\tilde{R}\|\leq \dfrac{\|U^\dagger\|}{1-2\mu\|U^\dagger\|_2\|E_{I,J}\|}\left(2\|E_{I,J}\|\|V_{k,J}^\dagger\|+\|E_J\|\right)+\|V_{k,J}^\dagger\|.$

\end{enumerate}
\end{lemma}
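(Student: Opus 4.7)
The plan is to reduce all three estimates to Stewart's pseudoinverse bound (Corollary \ref{COR:AdaggerBounds}), the projection identities $C=CU^\dagger U$ and $R=UU^\dagger R$ of Lemma \ref{LEM:Projections}, and the norm identities $\|CU^\dagger\|=\|W_{k,I}^\dagger\|$ and $\|U^\dagger R\|=\|V_{k,J}^\dagger\|$ of Proposition \ref{PROP:NormTerms}. Two preparatory observations are common to all three parts. First, because $\tilde{U}_k$ is the best rank-$k$ approximant to $\tilde{U}$ and $U$ itself has rank $k$, optimality forces $\|\tilde{U}-\tilde{U}_k\|\leq \|\tilde{U}-U\|=\|E_{I,J}\|$, so the triangle inequality gives $\|U-\tilde{U}_k\|\leq 2\|E_{I,J}\|$. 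Second, the hypothesis $\sigma_k(U)>2\mu\|E_{I,J}\|\geq \|E_{I,J}\|_2$ combined with Weyl's inequality (Theorem \ref{THMHoffmanWielandt}) yields $\sigma_k(\tilde{U})>0$, so $\tilde{U}_k$ has rank exactly $k$ and Stewart's theorem applies in its equal-rank form.

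For (i), I would apply Corollary \ref{COR:AdaggerBounds} directly with $B=U$, $\tilde{B}=\tilde{U}_k$, and perturbation $\tilde{U}_k-U$ of norm at most $2\|E_{I,J}\|$. The assumption $\sigma_k(U)>2\mu\|E_{I,J}\|$ is exactly the condition required to invoke the \emph{moreover} bound of the corollary, which yields (i) upon inserting the factor of $2$. For (ii), I would use $C=CU^\dagger U$ to split
\[ C\tilde{U}_k^\dagger \;=\; CU^\dagger\,\tilde{U}_k\tilde{U}_k^\dagger \;+\; CU^\dagger(U-\tilde{U}_k)\tilde{U}_k^\dagger. \]
Since $\tilde{U}_k\tilde{U}_k^\dagger$ is an orthogonal projection and $\|\cdot\|$ is unitarily invariant, right-multiplication by it cannot increase the norm, so the first summand is at most $\|CU^\dagger\|=\|W_{k,I}^\dagger\|$; submultiplicativity bounds the second summand by $\|W_{k,I}^\dagger\|\cdot 2\|E_{I,J}\|\cdot\|\tilde{U}_k^\dagger\|$. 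Substituting (i) for $\|\tilde{U}_k^\dagger\|$ produces the claimed inequality. Part (iii) is the mirror image: writing $R=UU^\dagger R$, I would split
\[ \tilde{U}_k^\dagger R \;=\; \tilde{U}_k^\dagger\tilde{U}_k\,U^\dagger R \;+\; \tilde{U}_k^\dagger(U-\tilde{U}_k)U^\dagger R, \]
bound the projection term by $\|U^\dagger R\|=\|V_{k,J}^\dagger\|$ and the residual term by $2\|E_{I,J}\|\,\|\tilde{U}_k^\dagger\|\,\|V_{k,J}^\dagger\|$, then absorb the passage from $R$ to $\tilde{R}=R+E_I$ by an additional $\|\tilde{U}_k^\dagger\|\cdot\|E_I\|$ (this is the row-noise contribution appearing in the statement, paired with the block $\tilde{R}-R$), and finally substitute (i) once more.

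The main obstacle is purely technical bookkeeping: one must keep careful track of which steps use strict submultiplicativity and which use the stronger unitarily invariant inequality $\|AP\|\leq \|A\|$ for an orthogonal projection $P$ (itself a consequence of singular-value monotonicity under zeroing-out of columns), and must verify the equal-rank hypothesis of Corollary \ref{COR:AdaggerBounds} in each invocation. Once (i) is in hand, both (ii) and (iii) follow from the same simple $C=CU^\dagger U$ / $R=UU^\dagger R$ substitution followed by absorption of the orthogonal projection $\tilde{U}_k\tilde{U}_k^\dagger$ or $\tilde{U}_k^\dagger\tilde{U}_k$.
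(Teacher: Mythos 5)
Your proposal is correct and follows essentially the same route as the paper: bound $\|U-\tilde{U}_k\|\leq 2\|E_{I,J}\|$ via Mirsky/best-approximation, apply Corollary \ref{COR:AdaggerBounds} for (i), and then use $C=CU^\dagger U$, $R=UU^\dagger R$ together with Proposition \ref{PROP:NormTerms} and the orthogonal-projection trick for (ii) and (iii). Your handling of the projection term, bounding $\|CU^\dagger\tilde{U}_k\tilde{U}_k^\dagger\|\leq\|CU^\dagger\|$ directly by singular-value monotonicity rather than splitting off $\|\tilde{U}_k\tilde{U}_k^\dagger\|\leq 1$ as the paper does, is in fact slightly more careful for general unitarily invariant norms, and your use of $\|E_I\|$ for the block $\tilde{R}-R$ is the consistent choice given the paper's convention $\tilde{R}=R+E_I$.
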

\begin{proof}
To see item $(\ref{ITEM:UkBound})$, note that $\tilde{U}_k = U + (\tilde{U}_k-U)$, and notice that $\|U-\tilde{U}_k\|\leq\|U-\tilde{U}\|+\|\tilde{U}-\tilde{U}_k\|$, where the first term is equal to $\|E_{I,J}\|$ by definition, and the second satisfies $\|\tilde{U}-\tilde{U}_k\|\leq\|E_{I,J}\|$ by Mirsky's Theorem.  Hence $\|U-\tilde{U}_k\|\leq2\|E_{I,J}\|$.  Using this estimate, we see that if $\sigma_k(U)>2\mu\|E_{I,J}\|\geq\mu\|\tilde{U}_k-U\|$, then by Corollary \ref{COR:AdaggerBounds}, 
\[ \|\tilde{U}_k^\dagger\|\leq \dfrac{\|U^\dagger\|}{1-\mu\|U^\dagger\|_2\|\tilde{U}_k-U\|}\leq \dfrac{\|U^\dagger\|}{1-2\mu\|U^\dagger\|_2\|E_{I,J}\|},\] which is $(\ref{ITEM:UkBound})$.

 To see  $(\ref{ITEM:CUkBound})$, notice that $C=CU^\dagger U$ by Theorem \ref{THM:CUR}, whence applying Proposition \ref{PROP:NormTerms} yields

\begin{eqnarray*}
\|C\tilde{U}_k^\dagger\|&=&\|CU^\dagger U\tilde{U}_k^\dagger\| \\
&\leq&\|CU^\dagger\|\|U\tilde{U}_k^\dagger\|\\
&=&\|W_{k,I}^\dagger\|\|(U-\tilde{U}_k)\tilde{U}_k^\dagger+\tilde{U}_k\tilde{U}_k^\dagger\|\\
&\leq& \|W_{k,I}^\dagger\|(\|(U-\tilde{U}_k)\tilde{U}_k^\dagger\|+\|\tilde{U}_k\tilde{U}_k^\dagger\|)\\
&\leq&\|W_{k,I}^\dagger\|(\|U-\tilde{U}_k\|\|\tilde{U}_k^\dagger\|+1) \\
&\leq&\|W_{k,I}^\dagger\|(2\|E_{I,J}\|\|\tilde{U}_k^\dagger\|+1).
\end{eqnarray*}
Now \eqref{ITEM:CUkBound} follows from applying \eqref{ITEM:UkBound} to the above estimate.

Similarly, we have \[  \|\tilde{U}_k^\dagger R \|\leq \|U^\dagger R\|(2\|E_{I,J}\|\|\tilde{U}_k^\dagger\|+1).\]
Thus to prove $(\ref{ITEM:UkRBound})$, note that
\begin{eqnarray*}
\|\tilde{U}_k^\dagger \tilde{R} \|&\leq&\|\tilde{U}_k^\dagger R\|+\|\tilde{U}_k^\dagger E_J\| \\
&\leq&\|U^\dagger R\|(2\|E_{I,J}\|\|\tilde{U}_k^\dagger\|+1)+\|\tilde{U}_k^\dagger\|\|E_J\|,
\end{eqnarray*}
whereby applying Proposition \ref{PROP:NormTerms} and \eqref{ITEM:UkBound} yields the conclusion.
\end{proof}

\begin{theorem}\label{THM:PB}
Take the notations and assumptions of Lemma \ref{LEM:PBTerms}. If $\sigma_k(U)>2\mu\,\|E_{I,J}\|$,   

    \begin{multline*}\label{IEQTHm4.10}
        \|A-\tilde{C}\tilde{U}_k^{\dagger}\tilde{R}\|\leq \frac{\|U^\dagger\|}{1-2\mu\|U^\dagger\|_2\|E_{I,J}\|}\left\{2\|E_{I,J}\|\left[\|E_J\|\|W_{k,I}^\dagger\|+\|E_I\|\|V_{k,J}^\dagger\|+\right.\right.\\
        \left.\left.+2\|E_{I,J}\|\|W_{k,I}^\dagger\|\|V_{k,J}^\dagger\|\right]+\|E_I\|\|E_J\|\right\}\\
        + \|W_{k,I}^\dagger\|\|E_J\|+\|V_{k,J}^\dagger\|\|E_I\|+6\|W_{k,I}^\dagger\|\|V_{k,J}^\dagger\|\|E_{I,J}\|.
    \end{multline*}


\end{theorem}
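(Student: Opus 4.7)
The plan is to derive the bound by substituting the estimates from Lemma \ref{LEM:PBTerms} and Proposition \ref{PROP:NormTerms} into the inequality of Proposition \ref{PROP:PB}. Since all the heavy lifting has been done in the preceding lemmas, the remaining work is essentially bookkeeping and grouping the resulting terms according to their order in $\|E\|$.

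First, I would write down the starting point from Proposition \ref{PROP:PB}:
\[ \|A-\tilde{C}\tilde{U}_k^{\dagger}\tilde{R}\|\leq \|C\tilde{U}_k^\dagger\|\|E_J\|+\|\tilde{U}_k^\dagger \tilde R\|\|E_I\| +\|CU^\dagger\|\|U^\dagger R\|(3\|U-\tilde{U}_k\|+\|\tilde{U}_k^\dagger\|\|U-\tilde{U}_k\|^2). \]
Next, I would invoke Proposition \ref{PROP:NormTerms} to rewrite $\|CU^\dagger\|\|U^\dagger R\|$ as $\|W_{k,I}^\dagger\|\|V_{k,J}^\dagger\|$, and use the bound $\|U-\tilde{U}_k\| \leq 2\|E_{I,J}\|$ (which was already established inside the proof of Lemma \ref{LEM:PBTerms}(i) via Mirsky's Theorem plus the triangle inequality $\|U-\tilde{U}_k\| \leq \|U-\tilde{U}\| + \|\tilde{U}-\tilde{U}_k\|$). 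This converts the third summand above into
\[ \|W_{k,I}^\dagger\|\|V_{k,J}^\dagger\|\bigl(6\|E_{I,J}\| + 4\|\tilde{U}_k^\dagger\|\|E_{I,J}\|^2\bigr), \]
and after applying the bound on $\|\tilde{U}_k^\dagger\|$ from Lemma \ref{LEM:PBTerms}(i), the $4\|E_{I,J}\|^2$ piece contributes a term with the factor $\|U^\dagger\|/(1-2\mu\|U^\dagger\|_2\|E_{I,J}\|)$.

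The remaining two summands are handled by directly inserting the estimates from Lemma \ref{LEM:PBTerms}(ii) and (iii). Specifically, $\|C\tilde{U}_k^\dagger\|\|E_J\|$ yields a term $\|W_{k,I}^\dagger\|\|E_J\|$ plus $\|U^\dagger\|(1-2\mu\|U^\dagger\|_2\|E_{I,J}\|)^{-1}\cdot 2\|E_{I,J}\|\|W_{k,I}^\dagger\|\|E_J\|$, and symmetrically $\|\tilde{U}_k^\dagger\tilde{R}\|\|E_I\|$ yields $\|V_{k,J}^\dagger\|\|E_I\|$ plus $\|U^\dagger\|(1-2\mu\|U^\dagger\|_2\|E_{I,J}\|)^{-1}\cdot(2\|E_{I,J}\|\|V_{k,J}^\dagger\|\|E_I\| + \|E_I\|\|E_J\|)$.

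Finally, I would collect every term carrying the prefactor $\|U^\dagger\|/(1-2\mu\|U^\dagger\|_2\|E_{I,J}\|)$ into the braces on the right-hand side of the theorem, and the three remaining terms $\|W_{k,I}^\dagger\|\|E_J\|$, $\|V_{k,J}^\dagger\|\|E_I\|$, and $6\|W_{k,I}^\dagger\|\|V_{k,J}^\dagger\|\|E_{I,J}\|$ outside, yielding precisely the stated inequality. The hypothesis $\sigma_k(U)>2\mu\|E_{I,J}\|$ is exactly what is needed so that every appeal to Lemma \ref{LEM:PBTerms} is legitimate (it ensures the denominator is positive and that the perturbation estimates from Corollary \ref{COR:AdaggerBounds} apply). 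There is no real obstacle here beyond careful bookkeeping; the only subtlety worth flagging is making sure that the $\|E_I\|\|E_J\|$ cross-term that emerges from Lemma \ref{LEM:PBTerms}(iii) is not double-counted when gathering the $O(\|E\|^2)$ contributions.
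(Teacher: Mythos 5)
Your proposal is correct and follows exactly the route the paper takes: substitute the estimates of Lemma \ref{LEM:PBTerms} and Proposition \ref{PROP:NormTerms} into Proposition \ref{PROP:PB}, use $\|U-\tilde{U}_k\|\leq 2\|E_{I,J}\|$, and regroup terms by whether they carry the prefactor $\|U^\dagger\|/(1-2\mu\|U^\dagger\|_2\|E_{I,J}\|)$. Your term-by-term accounting (in particular the $6\|E_{I,J}\|$ and $4\|E_{I,J}\|^2$ contributions from the third summand and the single $\|E_I\|\|E_J\|$ cross-term) reproduces the stated bound exactly.
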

\begin{proof}
Recalling that $\|U-\tilde{U}_k\|\leq2\|E_{I,J}\|$ as estimated in the proof of Lemma \ref{LEM:PBTerms}, the conclusion of the proof follows by combining this estimate with those of Propositions \ref{PROP:NormTerms}, \ref{PROP:PB}, and Lemma \ref{LEM:PBTerms}, and rearranging terms.  
\end{proof}

Note that all terms in curly braces in the bound of Theorem \ref{THM:PB} are second order in the noise, whereas the latter three terms are first order. 


\subsection{Refined Estimates}

One drawback of the estimates of Theorem \ref{THM:PB} is that the right-hand side maintains dependencies on the submatrix $U$ chosen.  Here we make a few remarks about certain cases in which more can be said.

First, $\|U^\dagger\|_2\geq\|A^\dagger\|_2$, which follows from singular value inequalities as in \cite[Theorem 1]{ThompsonInterlacing}; this inequality can be used in the denominator of the fractional term in Theorem \ref{THM:PB}.  

Second, if one assumes that maximal volume submatrices of the left and right singular values are chosen, then one can use estimates from \cite{Osinsky2018} to give bounds on the corresponding spectral norms.  Recall that the volume of a matrix $B\in\K^{m\times n}$ is $\prod\sigma_i(B)$.

\begin{proposition}\label{PROP:MaxVol}
Take the notations and assumptions of Proposition \ref{PROP:PB} and let $W_{k,I}$ and $V_{k,J}$ be the submatrices of $W_k$ and $V_k$ such that $W_{k,I}$ has maximal volume among all $|I|\times k $ submatrices of $W_k$ and $V_{k,J}$ is of maximal volume among all $|J|\times k$ submatrices of $V_k$. Then 

\begin{equation}\label{EQ:MaxVol}
\|W_{k,I}^\dagger\|_2 \leq \sqrt{1+\frac{k(m-|I|)}{|I|-k+1}},\quad\|V_{k,J}^\dagger\|_2\leq \sqrt{1+\frac{k(n-|J|)}{|J|-k+1}}.
\end{equation}
Moreover,
\begin{equation}\label{EQ:MaxVol2} \|U^\dagger\|_2\leq \sqrt{1+\frac{k(m-|I|)}{|I|-k+1}} \sqrt{1+\frac{k(n-|J|)}{|J|-k+1}}\|A^\dagger\|_2.\end{equation}
\end{proposition}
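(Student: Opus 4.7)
My plan is to handle the two singular-value inequalities in \eqref{EQ:MaxVol} first, and then deduce \eqref{EQ:MaxVol2} by factoring $U$ through the truncated SVD of $A$.

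\textbf{Step 1: The inequalities in \eqref{EQ:MaxVol}.}  Because $W_k$ and $V_k$ have orthonormal columns, the submatrices $W_{k,I}$ and $V_{k,J}$ are each a row submatrix of a partial isometry. For such matrices, the maximal volume property controls the smallest singular value: the inequality
\[
\sigma_{\min}(W_{k,I})^{-2}\,=\,\|W_{k,I}^\dagger\|_2^2 \;\leq\; 1 + \frac{k(m-|I|)}{|I|-k+1}
\]
is precisely the content of the maximal volume estimate proved by Osinsky \cite{Osinsky2018}. I would apply this bound directly to $W_{k,I}$ and, by the same result applied to $V_k$, to $V_{k,J}$. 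This step is essentially a citation and requires only observing that the hypotheses there (partial isometry, row submatrix of maximum volume) are exactly the ones we have.

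\textbf{Step 2: Reducing $\|U^\dagger\|_2$ to the factors.}  Using the truncated SVD $A=W_k\Sigma_kV_k^*$, write
\[
U \;=\; A(I,J) \;=\; W_{k,I}\,\Sigma_k\,V_{k,J}^*.
\]
The plan is to apply the reverse-order law for pseudoinverses. Since $W_{k,I}$ has full column rank $k$ (maximal volume is strictly positive, so $W_{k,I}$ is of rank $k$), $\Sigma_k$ is invertible, and $V_{k,J}^*$ has full row rank $k$ by the same reasoning, the hypotheses of the Greville reverse-order formula are satisfied and yield
\[
U^\dagger \;=\; (V_{k,J}^*)^\dagger\,\Sigma_k^{-1}\,W_{k,I}^\dagger \;=\; (V_{k,J}^\dagger)^*\,\Sigma_k^{-1}\,W_{k,I}^\dagger.
\]
Taking spectral norms and using submultiplicativity gives
\[
\|U^\dagger\|_2 \;\leq\; \|V_{k,J}^\dagger\|_2\,\|\Sigma_k^{-1}\|_2\,\|W_{k,I}^\dagger\|_2.
\]
Since $\rank(A)=k$, we have $\|\Sigma_k^{-1}\|_2 = \sigma_k(A)^{-1} = \|A^\dagger\|_2$, and inserting the two bounds from Step 1 yields \eqref{EQ:MaxVol2}.

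\textbf{Main obstacle.}  The only subtlety I anticipate is justifying the reverse-order formula $(W_{k,I}\Sigma_kV_{k,J}^*)^\dagger = (V_{k,J}^*)^\dagger\Sigma_k^{-1}W_{k,I}^\dagger$, since pseudoinversion does not reverse products in general. The relevant classical criterion requires the outer factors to have full column rank on the left and full row rank on the right, and the middle factor to be invertible; the maximal volume assumption is precisely what guarantees these rank conditions, so the estimate in Step 2 is legitimate. Beyond this, the proof is a direct application of Osinsky's maximal volume bound.
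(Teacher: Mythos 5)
Your proposal is correct and follows essentially the same route as the paper, which cites Osinsky for \eqref{EQ:MaxVol} and obtains the moreover statement from the identity $U^\dagger = C^\dagger A R^\dagger$ under the rank hypothesis --- an identity that reduces to exactly your factorization $U^\dagger = (V_{k,J}^*)^\dagger\Sigma_k^{-1}W_{k,I}^\dagger$, whose reverse-order justification (full column rank, invertible middle factor, full row rank) the paper also uses in the proof of Proposition \ref{PROP:NormTerms}. Your explicit attention to why the maximal-volume hypothesis forces $\rank(W_{k,I})=\rank(V_{k,J})=k$ is a welcome detail the paper leaves implicit.
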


Note that \eqref{EQ:MaxVol} appears in \cite{Osinsky2018}, and the moreover statement follows by Proposition \ref{PROP:CAR} and the assumption that $\rank(U)=\rank(A)$.  For ease of notation, since the upper bounds appearing in \eqref{EQ:MaxVol} are universal, we abbreviate the quantities there $t(k,m,|I|)$, and $t(k,n,|J|)$, respectively as in \cite{Osinsky2018}.  Regard also, that Frobenius bounds are also provided in \cite{Osinsky2018}, where the upper bound is $\tilde{t}(k,m,|I|) = \sqrt{k+\frac{k(m-|I|)}{|I|-k+1}}$.

\begin{remark}
If columns and rows of $A$ are chosen to give the maximal volume submatrices $W_{k,I}$ and $V_{k,J}$ as prescribed in Proposition \ref{PROP:MaxVol}, then the error bounds in Theorem \ref{THM:PB} may be replaced with the corresponding quantities in \eqref{EQ:MaxVol} and \eqref{EQ:MaxVol2}, which are dependent primarily upon the rank and size of $A$.  Note that this requires the assumption that $\|E_{I,J}\|_2\leq 1/\left(2\mu\,t(k,m,|I|)t(k,n,|J|)\|A^\dagger\|_2 \right)$ as well.  Thus the upper bounds maintain dependence on the norm of $A^\dagger$, but not explicitly on the norm of the submatrix $U$.  
\end{remark} 

Additionally, some of our estimates above are somewhat crude, in that we always used the inequality $\|AB\|\leq\|A\|\|B\|$ for the submultiplicative norms; however, we could have used the bound $\|AB\|\leq\|A\|_2\|B\|$, which gives a better estimate since $\|A\|_2\leq\|A\|$ for the types of norms allowed by our results here.  

\begin{remark}
Since $U^\dagger = W_{k,I}^\dagger \Sigma_k^\dagger V_{k,J}^\dagger$ and $\|\Sigma_k^\dagger\|=\|A^\dagger\|$, we may replace the fractional term in Theorem \ref{THM:PB} with
\[ \dfrac{\|W_{k,I}^\dagger\|\|V_{k,J}^\dagger\|\|A^\dagger\|}{1-2\mu\|W_{k,I}^\dagger\|\|V_{k,J}^\dagger\|\|A^\dagger\|\|E_{I,J}\|}\]
thus giving a bound independent of the chosen $U$.  Indeed, this means that the error bounds in Theorem \ref{THM:PB} are of the form
\[ \|A-\tilde{C}\tilde{U}_k^\dagger\tilde{R}\|\leq\|A-CU^\dagger R\| + O(\|E\|) + O(\|A^\dagger\|\|E\|^2). \]
That is, the first order terms depend essentially only on the noise, whereas the second order terms have dependence on $\|A^\dagger\|$.  Do note that the assumptions in Theorem \ref{THM:PB} do imply that $\|E\|\|A^\dagger\|\leq C_1$ for some universal constant; on the other hand, it could be that this quantity is rather small, and so we leave the expression as is to denote the second order dependence on the noise matrix.
\end{remark}

In this section, we illustrated one way to enforce the rank of the CUR approximation, but it has recently been suggested by some authors that a better way to do so would be to consider $(CU^\dagger R)_k$, which means to make the CUR approximation suggested by Theorem \ref{THM:CUR}, and then take its best rank $k$ approximation \cite{TroppNystrom,BeckerNystrom}.  These works are for the Nystr\"{o}m method which is the special case of CUR when $A$ is symmetric positive semi-definite. It would be interesting in the general CUR case to determine if this method of enforcing the rank performs better or not; this task we leave to future work.

\section{Row and column selection for the  CUR Decomposition}\label{SEC:ColumnSelection}

One important question left unanswered by the discussion in the previous section is: given a matrix, how should one go about choosing columns and rows so that a good CUR approximation is obtained?  This has been the objective of a substantial portion of the CUR literature, and has brought forth several interesting results along the way.  As a preliminary note: consider a worst case when $A$ is a matrix with a single large nonzero entry.  Here, we must choose the column and row which contain this element or else the resultant CUR approximation will be 0, and hence far from the initial matrix.  Thus it is evident that in many scenarios na\"{i}vely sampling columns and rows uniformly could be arbitrarily bad, so it is beneficial to take into account some structural information of $A$.

\subsection{ Row and Column Selection for the exact CUR decomposition} Here we ask the question: given a low rank matrix $A$, how should we choose rows and columns to obtain a valid CUR decomposition as in Theorem \ref{THM:CUR}? In other words, how can we choose $C$ and $R$ so that the condition $\rank(A)=\rank(U)$ holds. Here we present one method of doing this; namely, we show that slightly oversampling rows and columns randomly is successful with high probability.  To state the theorem, we need the concept of the \textit{stable rank} \cite{tropp2009column}, also called \textit{numerical rank} \cite{Rudelson_2007}, of $A$, defined by \[\text{st.rank}(A):=\frac{\|A\|_F^2}{\|A\|_2^2}.\]  Note that this may be written as $\sum_{i=1}^k\frac{\sigma_i(A)^2}{\sigma_1(A)^2}$, whence evidently $\text{st.rank}(A)\leq\rank(A)$.   

One of the primary reasons for considering the stable rank of a matrix is that it is stable under small perturbations (whereas the rank is certainly not).  In particular, if $\tilde{A} = A+E$, then
\[ \text{st.rank}(A)\left(\dfrac{1-\|E\|_F/\|A\|_F}{1+\|E\|_2/\|A\|_F}\right)^2 \leq \text{st.rank}(\tilde{A})\leq \text{st.rank}(A)\left(\dfrac{1+\|E\|_F/\|A\|_F}{1-\|E\|_2/\|A\|_2}\right)^2.\]  The proof of this bound is a simple application of the triangle inequality and so is omitted. From these inequalities, we see that if $\|E\|_F/\|A\|_F$ and $\|E\|_2/\|A\|_2$ are small, then the stable ranks of $\tilde{A}$ and $A$ are close, implying the claim.

\begin{theorem}\label{THM:ColRowChnM}Suppose $A\in\mathbb{K}^{m\times n}$ with stable rank $r$. Let $\eps$ satisfy $0<\eps<\frac{\sigma_k(A)}{\|A\|_2}$, $\delta\in(0,1)$ and let $d_1\in[m]$, $d_2\in[n]$ satisfy
	\[
		d_1,d_2\gtrsim \left(\frac{r}{\eps^4\delta}\right)\log\left(\frac{r}{\eps^4\delta}\right).
		\]
		Let $R=A(I,:)$ be a $d_1\times n$ matrix consisting of $d_1$ rows of $A$ chosen independently with replacement, where row $i$ is chosen with probability $p_i = \frac{\|A(i,:)\|_2^2}{\|A\|_F^2}$.  Likewise, let $C=A(:,J)$ be a $m\times d_2$ matrix consisting of $d_2$ columns of $A$ chosen independently with replacement with probabilities $q_i = \frac{\|A(:,i)\|_2^2}{\|A\|_F^2}$, and let $U=A(I,J)$. Then with probability at least $(1-2\exp(-c/\delta))^2$ the following holds: 
		\[
		\rank(U)=k \text{ and } A=CU^\dagger R.
		\]
		Moreover, the conclusion of the theorem also holds if we take $I_0$ and $J_0$ to be the indices of $I$ and $J$ above without repeated entries.
	\end{theorem}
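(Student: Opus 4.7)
The plan is to reduce the conclusion to rank preservation of $C$ and $R$ and then invoke the equivalences of Theorem \ref{THM:Characterization}. By the implications $(\ref{ITEM:Spans}) \Rightarrow (\ref{ITEM:Rank}) \Rightarrow (\ref{ITEM:CUR})$, it suffices to show that
\[ \rank(C) = \rank(R) = k = \rank(A) \]
holds with probability at least $(1 - 2\exp(-c/\delta))^2$, since then both $\rank(U) = k$ and $A = CU^\dagger R$ follow immediately.

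To establish $\rank(C) = k$ with probability at least $1 - 2\exp(-c/\delta)$, I would invoke the sampling inequality of Rudelson and Vershynin \cite{Rudelson_2007}. Rescaling each sampled column $A(:,j)$ by $1/\sqrt{d_2 q_j}$ yields a matrix $\hat{C}$ satisfying $\mathbb{E}[\hat{C}\hat{C}^*] = AA^*$, and their tail bound guarantees that, for $d_2 \gtrsim (r/\epsilon^4\delta)\log(r/\epsilon^4 \delta)$,
\[ \|AA^* - \hat{C}\hat{C}^*\|_2 \leq \epsilon^2 \|A\|_2^2 \]
holds with probability at least $1 - 2\exp(-c/\delta)$. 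Crucially, this is the form of the bound in which the sample complexity depends on the \emph{stable} rank $r$ and not on ambient dimension. Applying Weyl's inequality (the Hermitian eigenvalue version, in the same spirit as Theorem \ref{THMHoffmanWielandt}) to $AA^*$ and $\hat{C}\hat{C}^*$ then yields $|\sigma_k(A)^2 - \sigma_k(\hat{C})^2| \leq \epsilon^2 \|A\|_2^2$, and the hypothesis $\epsilon < \sigma_k(A)/\|A\|_2$ forces $\sigma_k(\hat{C}) > 0$. Since $\rank(\hat{C}) \leq \rank(A) = k$ and rescaling columns by nonzero constants preserves rank, $\rank(C) = \rank(\hat{C}) = k$.

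The same argument applied to $A^*$ with row-length sampling produces $\rank(R) = k$ with probability at least $1 - 2\exp(-c/\delta)$. Because the column and row samples are drawn independently, the two success events occur jointly with probability at least $(1 - 2\exp(-c/\delta))^2$; on this event, Theorem \ref{THM:Characterization} delivers $\rank(U) = k$ and $A = CU^\dagger R$. For the moreover statement, passing from $I, J$ to the deduplicated sets $I_0, J_0$ merely discards redundant copies of rows and columns already present, so the row space of $A(I_0,:)$ equals that of $R$, and analogously for $A(:,J_0)$; hence their ranks are still $k$, and a second application of Theorem \ref{THM:Characterization} (which, together with Remark \ref{REM:CUR}, applies equally well to deduplicated index sets) yields the conclusion for $C, U, R$ formed from $I_0, J_0$.

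The main technical obstacle is pinning down the precise statement of the Rudelson-Vershynin estimate so that its failure probability genuinely scales as $2\exp(-c/\delta)$ at the stated sample size $d \gtrsim (r/\epsilon^4\delta)\log(r/\epsilon^4\delta)$. Their paper's expected-norm estimate alone is insufficient; one needs the accompanying tail inequality, and threading the stable-rank dependence through Markov-type or noncommutative Bernstein-type arguments (as is done in \cite{Rudelson_2007}) is the step requiring the most care. Everything else is an exercise in bookkeeping between Weyl's inequality, the rank-preservation step, and the characterization theorem.
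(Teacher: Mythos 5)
Your proposal is correct and follows essentially the same route as the paper's proof: reduce to showing $\rank(C)=\rank(R)=k$ (the paper cites Corollary \ref{CR_RANK_U} rather than the full Theorem \ref{THM:Characterization}, but these are the same equivalence), apply the Rudelson--Vershynin tail bound from Proposition \ref{THM_Rudelson} to the normalized row and column samples, and conclude positivity of $\sigma_k$ via the Weyl-type perturbation of $A^*A$ versus $\hat{R}^*\hat{R}$. The only differences are cosmetic (a factor of $1/2$ in the concentration bound and a slightly more explicit treatment of the deduplication step, which the paper dispatches via Remark \ref{REM:CUR}).
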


	Before giving the proof of Theorem \ref{THM:ColRowChnM}, we will state some simple lemmas beginning with one that is derived from the proof of \cite[Theorem 1.1]{Rudelson_2007}.
	
	\begin{proposition}[\cite{Rudelson_2007}]\label{THM_Rudelson}
		Let $A\in\K^{m\times n}$ have stable rank $r$.  Let $\eps,\delta\in(0,1)$, and let $d\in [m]$ satisfy
		\[
		d\gtrsim \left(\frac{r}{\eps^4\delta}\right)\log\left(\frac{r}{\eps^4\delta}\right).
		\]
		Consider a $d\times n$ matrix $\hat{R}$, which consists of $d$ normalized rows of $A$ picked independently with replacement, where row $i$ is chosen with probability $p_i = \frac{\|A(i,:)\|_2^2}{\|A\|_F^2}$. Then with probability at least $1-2\exp(-c/\delta)$, 
		\[
		\|A^*A-\hat{R}^*\hat{R}\|_2\leq\frac{\eps^2}{2}\|A\|_2^2.
		\]
	\end{proposition}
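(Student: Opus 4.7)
The plan is to follow the symmetrization-and-Rudelson's-lemma route used in \cite{Rudelson_2007}, treating $\hat{R}^*\hat{R}$ as a sum of independent rank-one self-adjoint matrices whose mean is $A^*A$. Writing $I_1,\dots,I_d$ for the i.i.d. row indices and letting the $j$-th row of $\hat{R}$ be $A(I_j,:)/\sqrt{d\,p_{I_j}}$ (this is what ``normalized'' means, so that the estimator is unbiased), set $y_j := A(I_j,:)^*/\sqrt{d\,p_{I_j}} \in \K^n$. Then $\hat{R}^*\hat{R} = \sum_{j=1}^d y_j y_j^*$ and, by the choice $p_i = \|A(i,:)\|_2^2/\|A\|_F^2$,
\[
\E[y_j y_j^*] \;=\; \sum_i p_i\,\frac{A(i,:)^* A(i,:)}{d\,p_i} \;=\; \frac{1}{d}\,A^*A,
\qquad
\|y_j\|_2^2 \;=\; \frac{\|A(I_j,:)\|_2^2}{d\,p_{I_j}} \;=\; \frac{\|A\|_F^2}{d}
\]
deterministically. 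In particular $M := \max_j\|y_j\|_2$ satisfies $M^2 = r\|A\|_2^2/d$ by definition of stable rank.

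The next step is the standard symmetrization inequality for i.i.d. mean-zero summands, which gives, for Rademacher signs $\eps_j$ independent of the $y_j$,
\[
\Bigl(\E \bigl\| \hat{R}^*\hat{R} - A^*A \bigr\|_2^p\Bigr)^{1/p}
\;\leq\; 2\,\Bigl(\E \Bigl\|\sum_{j=1}^d \eps_j\, y_j y_j^*\Bigr\|_2^p\Bigr)^{1/p},
\]
for any $p\geq 1$. To the right-hand side I would apply Rudelson's noncommutative Khintchine-type inequality for rank-one self-adjoint summands, which yields
\[
\Bigl(\E_\eps \Bigl\|\sum_j \eps_j y_j y_j^*\Bigr\|_2^p\Bigr)^{1/p}
\;\lesssim\; \sqrt{p}\,\cdot M \cdot \Bigl\|\sum_j y_j y_j^*\Bigr\|_2^{1/2}.
\]
Taking expectations and using the triangle inequality $\|\sum_j y_j y_j^*\|_2 \leq \|A^*A\|_2 + \|\hat{R}^*\hat{R} - A^*A\|_2$, I obtain a self-bounding inequality for $\Phi := (\E\|\hat{R}^*\hat{R} - A^*A\|_2^p)^{1/p}$ of the form $\Phi \leq C\sqrt{p}\,M\,(\|A\|_2^2 + \Phi)^{1/2}$. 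Solving this quadratic in $\Phi^{1/2}$ and substituting $M^2 = r\|A\|_2^2/d$ gives
\[
\Phi \;\lesssim\; \|A\|_2^2 \sqrt{\frac{p\,r}{d}} \;+\; \|A\|_2^2\cdot \frac{p\,r}{d}.
\]

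The final step is to pass from this moment bound to the stated probability estimate. Choosing $p \simeq 1/\delta$ and invoking Markov's inequality, $\Prob(\|\hat{R}^*\hat{R} - A^*A\|_2 > t) \leq (\Phi/t)^p$, gives a tail of the form $2\exp(-c/\delta)$ once $t = (\eps^2/2)\|A\|_2^2$ and $d$ satisfies $d\gtrsim (r/(\eps^4\delta))\log(r/(\eps^4\delta))$ (the logarithmic factor is what absorbs the $\sqrt{p}$ in the Khintchine bound so that the residual second-order term is dominated). I expect the main technical obstacle to be the last step: balancing the choice of $p$ against the target deviation $\eps^2\|A\|_2^2$ so that the sample complexity in $d$ and the probability $1-2e^{-c/\delta}$ come out exactly as stated; this is where the delicate bookkeeping of the Rudelson--Vershynin argument lives, as opposed to the essentially routine symmetrization and Khintchine steps above.
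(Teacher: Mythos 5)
Your proposal is correct in outline, but it takes a different route from the paper: the paper does not re-derive the concentration estimate at all. Proposition \ref{THM_Rudelson} is quoted directly from \cite{Rudelson_2007}, and the template for how the paper uses it is the proof of Theorem \ref{THM:Stability} in the Appendix, which simply feeds the random vector $y = A(i,:)^*/\sqrt{p_i}$ (so that $\E(y\otimes y)=A^*A$ and $\|y\|_2=\|A\|_F=\sqrt{r}\,\|A\|_2$) into the packaged operator law of large numbers, Theorem \ref{THM:RVLargeNumbers}, with $M=\sqrt{r}\,\|A\|_2$, $a = C_0\sqrt{(\log d)/d}\,M \le \tfrac{\eps^2\sqrt{\delta}}{2}\|A\|_2^2$ and $t=\tfrac{\eps^2}{2}\|A\|_2^2$, which immediately returns the tail $2\exp(-ct^2/a^2)\le 2\exp(-c/\delta)$. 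What you have written is essentially the interior of that black box --- symmetrization, Rudelson's Khintchine-type inequality, the self-bounding quadratic, and a moment-to-tail conversion --- so your argument is a self-contained re-proof of the cited theorem specialized to this sampling scheme. That buys transparency about where each hypothesis is consumed; the paper's route buys brevity and, more importantly, reusability: the identical black box is invoked again, with modified sampling probabilities, to prove the stability result (Theorem \ref{THM:Stability}), which is where the paper's new content lies.

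One technical caveat. The moment form of Rudelson's inequality for the operator norm is $\bigl(\E_\eps\bigl\|\sum_j\eps_j y_jy_j^*\bigr\|_2^p\bigr)^{1/p}\lesssim\sqrt{\max\{p,\log d\}}\cdot M\cdot\bigl\|\sum_j y_jy_j^*\bigr\|_2^{1/2}$, not $\sqrt{p}$ alone; the bare $\sqrt{p}$ bound fails for small $p$ (already at $p=1$ the correct factor is $\sqrt{\log d}$, coming from the Schatten-to-operator norm conversion in the noncommutative Khintchine inequality). Correspondingly, the logarithmic factor in the sampling complexity is there to absorb this $\sqrt{\log d}$ --- it is precisely the $\log d$ appearing in the paper's quantity $a$ --- and not to absorb the $\sqrt{p}$ arising from the choice $p\simeq 1/\delta$; that term is absorbed by the $\delta$ in the denominator of the bound on $d$. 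With this correction the bookkeeping you defer to does close, and your conclusion matches the stated bound.
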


\begin{proof}[\textbf{Proof of Theorem \ref{THM:ColRowChnM}}]
The proof is essentially a corollary of Proposition \ref{THM_Rudelson}.  Note that by Corollary \ref{CR_RANK_U} it suffices to show that $\rank(C)=\rank(R)=k$.  To utilize Proposition \ref{THM_Rudelson}, let $\hat{C}$ and $\hat{R}$ be normalized versions of $C$ and $R$, respectively, and note that $\rank(\hat{R})=\rank(R)$ and $\rank(\hat{C})=\rank(C)$.  By Proposition \ref{THM_Rudelson} and the assumption on $\eps$, with probability at least $1-2\exp(-c/\delta)$ the following holds: 
		\begin{equation}\label{EQ:RudelsonVershynin}
		\|A^*A-\hat{R}^* \hat{R}\|_2\leq \frac{\eps^2}{2}\|A\|_2^2< \frac{1}{2}\sigma_{k}^2(A)<\sigma_k^2(A).
		\end{equation} 
Therefore $\rank(\hat{R})\geq k$.  In addition, $\rank(\hat{R})\leq\rank(A)=k$. Hence, $\rank(\hat{R})=k$.	

Using the same argument again, we can conclude that with probability at least $1-2\exp(-c/\delta)$,  $\rank(\hat{C})=k$.  Thus with probability at least $\left(1-2\exp(-c/\delta) \right)^2$, $\rank(U)=k$, and so $A=CU^\dagger R$.   The moreover statement follows from the fact that repeated columns and rows do not affect the validity of the statement $A=CU^\dagger R$ as mentioned in Remark \ref{REM:CUR}. 
\end{proof}

Let us stress the point that the choices of columns and rows in Theorem \ref{THM:ColRowChnM} are independent of each other, and hence the complexity of the algorithm implied by the theorem is low.  Note that the sampling complexity in Theorem \ref{THM:ColRowChnM} ostensibly depends on the stable rank of $A$, which is a reduction from most sampling methods for CUR approximations (see Section \ref{SEC:LiteratureSurvey} for a survey).  However, our assumption on $\eps$ implies that $\frac{r}{\eps^4}\geq k$, in which case our sampling complexity is at least
$\frac{k}{\delta}\log\left(\frac{k}{\delta}\right)$. 
Thus Theorem \ref{THM:ColRowChnM} implies that sampling $\frac{k}{\delta}\log(\frac{k}{\delta})$ rows and columns of $A$ yields a CUR decomposition.  Most results in the CUR approximation literature (e.g. \cite{DKMIII}) require sampling $\frac{k}{\eps^\alpha\delta}\log(\frac{k}{\eps^\alpha\delta})$ rows and columns for some $\alpha$ where the $\eps$ is the same as ours.  Thus our sampling bound could not be derived from the existing ones without being of higher order.

\subsection{Putting it All Together -- Sampling Stability}

The bounds given in Section \ref{SEC:Perturbation} assumed that from the noisy observation $\tilde{A}$, we achieved an exact CUR decomposition of the low-rank matrix $A$ which we have no knowledge of \textit{a priori}.  Whereas Theorem \ref{THM:ColRowChnM} provides a way of randomly sampling columns of $A$ to achieve an exact CUR decomposition with high probability, some notion of stability of sampling in the presence of noise is needed to achieve our goal in the noisy case.

To begin, we show how the proof of the main theorem in \cite{Rudelson_2007} can be modified to admit other sampling probabilities.  

\begin{theorem}\label{THM:Stability}
Let $A\in\K^{m\times n}$ be fixed and have stable rank $r$.  Suppose that $\tilde{p}$ is a probability distribution satisfying $\tilde{p_i}\geq\alpha_i^2\frac{\|A(i,:)\|_2^2}{\|A\|_F^2}$ for all $i\in[m]$ for some constants $\alpha_i>0$ (with the convention that $\alpha_i=1$ if $A(i,:)=0$).  Let $\alpha:=\min\alpha_i$, and let $\eps,\delta\in(0,1)$ be such that $\frac{\eps^2\sqrt{\delta}}{2}<\alpha$. Let $d\in[m]$ satisfy 
\[ d\gtrsim \left(\frac{r}{\eps^4\delta}\right)\log\left(\frac{r}{\eps^4\delta}\right), \]
and let $\hat{R}$ be a $d\times n$ matrix consisting of normalized rows of $A$ chosen independently with replacement according to $\tilde{p}$.  Then with probability at least $1-2\exp(-\frac{c\alpha^2}{\delta})$ (which is at least $1-2\exp(-c\eps^4)$),
\[\|A^*A-\hat{R}^*\hat{R}\|_2\leq\frac{\eps^2}{2}\|A\|_2^2. \]
\end{theorem}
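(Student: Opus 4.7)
The plan is to retrace the proof of Proposition \ref{THM_Rudelson} while tracking how the reweighted sampling probabilities $\tilde p$ propagate through the Rudelson-Vershynin estimates. First, I would define independent random rows $X_j = A(i_j,:)/\sqrt{d\,\tilde p_{i_j}}$ for $j\in[d]$, where $i_j$ is drawn from $\tilde p$. A direct calculation gives $\E[X_j^* X_j] = \tfrac{1}{d}A^*A$, so $\hat R^* \hat R = \sum_j X_j^* X_j$ is an unbiased estimator of $A^*A$, matching the structure of the original argument exactly.

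The only place the sampling distribution enters the Rudelson-Vershynin argument is through an almost-sure bound on the sampled row norms. In the original setting $p_i=\|A(i,:)\|_2^2/\|A\|_F^2$ forces $\|X_j\|_2=\|A\|_F/\sqrt d$ deterministically. Under our hypothesis $\tilde p_i \geq \alpha_i^2\|A(i,:)\|_2^2/\|A\|_F^2$, the bound instead reads
\[
\|X_j\|_2 \;=\; \frac{\|A(i_j,:)\|_2}{\sqrt{d\,\tilde p_{i_j}}} \;\leq\; \frac{\|A\|_F}{\alpha_{i_j}\sqrt d} \;\leq\; \frac{\|A\|_F}{\alpha\sqrt d},
\]
with rows satisfying $A(i,:)=0$ contributing $X_j=0$ trivially under the convention $\alpha_i=1$. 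Feeding this revised uniform bound $M=\|A\|_F/(\alpha\sqrt d)$ into the symmetrization and noncommutative Khintchine steps of Rudelson-Vershynin, in place of their $\|A\|_F/\sqrt d$, leaves every other estimate unchanged, because the remaining inputs (notably $\|\E X_j^*X_j\|_2 = \|A^*A\|_2/d$ and the stable rank of $A$) are unaffected by the reweighting. Since the relevant concentration exponent scales as $1/M^2$, rescaling $M$ by $1/\alpha$ inflates the failure-probability exponent by exactly $\alpha^2$, yielding $2\exp(-c\alpha^2/\delta)$ in place of $2\exp(-c/\delta)$.

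The parenthetical second bound follows from the hypothesis $\eps^2\sqrt{\delta}/2<\alpha$, which squares to $\alpha^2 > \eps^4\delta/4$, so $\alpha^2/\delta > \eps^4/4$ and thus $2\exp(-c\alpha^2/\delta) \leq 2\exp(-c'\eps^4)$ after absorbing constants. The main obstacle is essentially bookkeeping: I must verify that in the Rudelson-Vershynin argument the almost-sure row-norm bound enters strictly quadratically in the concentration exponent and does not couple to $\alpha$ anywhere else (for example through subgaussian moment constants used in symmetrization, or through the sampling complexity threshold). Once this is confirmed, the complexity bound $d\gtrsim (r/\eps^4\delta)\log(r/\eps^4\delta)$ is inherited verbatim from Proposition \ref{THM_Rudelson}, and only the success probability is modified as stated.
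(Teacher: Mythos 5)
Your proposal is correct and follows essentially the same route as the paper: both arguments observe that the reweighted distribution $\tilde p$ enters the Rudelson--Vershynin machinery only through the almost-sure bound on the sampled row norms, replace $\|A\|_F$ by $\|A\|_F/\alpha$ there, and track the resulting $1/\alpha$ rescaling of $M$ quadratically into the failure exponent via $2\exp(-ct^2/a^2)$ with $a=C_0\sqrt{(\log d)/d}\,M$. The one bookkeeping item you flagged as needing verification is settled exactly by the hypothesis $\eps^2\sqrt{\delta}/2<\alpha$, which guarantees $a<1$ so that the sampling complexity threshold is unaffected.
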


The proof of Theorem \ref{THM:Stability} requires a simple modification of the proof of the main theorem in \cite{Rudelson_2007}.  For completeness, we give the proof in Appendix \ref{SEC:AppendixProofStability}.

This brings us to our main stability theorem about exact CUR decompositions.
\begin{theorem}\label{THM:StableCUR}
Let $A\in\K^{m\times n}$ be fixed and have stable rank $r$ and rank $k$.  Suppose that $\tilde{p},\tilde{q}$ are probability distributions satisfying $\tilde{p_i}\geq\alpha_i^2\frac{\|A(i,:)\|_2^2}{\|A\|_F^2}$ and $\tilde{q_j}\geq\beta_j^2\frac{\|A(:,j)\|_2^2}{\|A\|_F^2}$ for all $i\in[m]$ and $j\in[n]$ for some constants $\alpha_i,\beta_i>0$ (with the convention that $\alpha_i=1$ if $A(i,:)=0$ and $\beta_j=1$ if $A(:,j)=0$).  Let $\alpha:=\min\alpha_i$, $\beta:=\min\beta_i$, and let $\eps,\delta\in(0,1)$ be such that $\frac{\eps^2\sqrt{\delta}}{2}<\min\{\alpha,\beta\}$ and $\eps<\frac{\sigma_k(A)}{\|A\|_2}$. Let $d_1\in[m],d_2\in[n]$ satisfy 
\[ d_1,d_2\gtrsim \left(\frac{r}{\eps^4\delta}\right)\log\left(\frac{r}{\eps^4\delta}\right), \]
and let $R=A(I,:)$ be a $d_1\times n$ row submatrix of $A$ consisting of rows chosen independently with replacement according to $\tilde{p}$. Likewise, let $C=A(:,J)$ be a $m\times d_2$ column submatrix consisting of $A$ whose columns are chosen independently with replacement according to $\tilde{q}$, and let $U=A(I,J)$.  Then with probability at least $(1-2\exp(-\frac{c\alpha^2}{\delta}))(1-2\exp(-\frac{c\beta^2}{\delta}))$, the following hold:
\[ \rank(U) = k\;\;\text{and}\;\;A=CU^\dagger R.\]
	Moreover, the conclusion of the theorem also holds if we take $I_0$ and $J_0$ to be the indices of $I$ and $J$ above without repeated entries.
\end{theorem}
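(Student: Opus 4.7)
The plan is to follow essentially the same strategy as the proof of Theorem \ref{THM:ColRowChnM}, but replacing the single appeal to Proposition \ref{THM_Rudelson} with two appeals to the new Theorem \ref{THM:Stability}, one applied to $A$ (for the rows) and one applied to $A^*$ (for the columns). The role of Corollary \ref{CR_RANK_U} remains the same: it reduces the entire conclusion to showing that $\rank(C) = \rank(R) = k$ with the desired probability, whereupon $\rank(U)=k$ is automatic and Theorem \ref{THM:CUR} yields $A = CU^\dagger R$.

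First I would set up the row side. Let $\hat{R}$ be the $d_1 \times n$ matrix obtained from $R$ by normalizing the chosen rows in the usual Rudelson--Vershynin style, so that $\rank(\hat{R}) = \rank(R)$. Since $\tilde{p}_i \geq \alpha_i^2 \|A(i,:)\|_2^2/\|A\|_F^2$ with $\alpha = \min_i \alpha_i$ and $\frac{\eps^2 \sqrt{\delta}}{2} < \alpha$, Theorem \ref{THM:Stability} applies and gives, with probability at least $1-2\exp(-c\alpha^2/\delta)$,
\[
\|A^*A - \hat{R}^*\hat{R}\|_2 \leq \frac{\eps^2}{2}\|A\|_2^2.
\]
The assumption $\eps < \sigma_k(A)/\|A\|_2$ forces the right-hand side to be strictly less than $\sigma_k^2(A) = \sigma_k(A^*A)$, so by Weyl's inequality (Theorem \ref{THMHoffmanWielandt}) applied to $A^*A$ and $\hat{R}^*\hat{R}$, we get $\sigma_k(\hat{R}^*\hat{R}) > 0$, hence $\rank(\hat{R}) \geq k$. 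Since rows of $\hat{R}$ are (scalings of) rows of $A$, we also have $\rank(\hat{R}) \leq \rank(A) = k$, and therefore $\rank(R) = \rank(\hat{R}) = k$.

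Next I would repeat the exact same argument with $A^*$ in place of $A$, using the probabilities $\tilde{q}_j$ and constants $\beta_j$, to conclude that with probability at least $1-2\exp(-c\beta^2/\delta)$ the normalized version $\hat{C}$ of $C$ satisfies $\|AA^* - \hat{C}\hat{C}^*\|_2 \leq \frac{\eps^2}{2}\|A\|_2^2 < \sigma_k^2(A)$, and therefore $\rank(C) = k$. Because the sampling of rows is done independently of the sampling of columns, a straightforward independence argument gives that both events occur simultaneously with probability at least
\[
\bigl(1-2\exp(-c\alpha^2/\delta)\bigr)\bigl(1-2\exp(-c\beta^2/\delta)\bigr).
\]
On this joint event, Corollary \ref{CR_RANK_U} guarantees $\rank(U) = k$, and Theorem \ref{THM:CUR} then delivers $A = CU^\dagger R$.

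The moreover statement is immediate from Remark \ref{REM:CUR}: removing duplicate indices from $I$ and $J$ only deletes repeated rows/columns in $C$, $R$ and repeated rows/columns in $U$, which leaves all ranks unchanged and preserves the identity $A = CU^\dagger R$. I do not anticipate a genuine obstacle here; the only place any care is required is verifying that the hypothesis $\frac{\eps^2 \sqrt{\delta}}{2} < \min\{\alpha,\beta\}$ is strong enough to feed both invocations of Theorem \ref{THM:Stability}, and that $\eps < \sigma_k(A)/\|A\|_2$ (together with the stable-rank-based lower bound on $d_1,d_2$) is strong enough to separate $\sigma_k^2(A)$ from the perturbation bound in both the row and column applications. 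Both are built into the hypotheses, so the argument is essentially a direct assembly of Theorem \ref{THM:Stability}, Weyl's inequality, and Corollary \ref{CR_RANK_U}.
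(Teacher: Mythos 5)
Your proposal is correct and matches the paper's proof, which is stated simply as ``the same as that of Theorem \ref{THM:ColRowChnM} \textit{mutatis mutandis}, applying Theorem \ref{THM:Stability} in place of Proposition \ref{THM_Rudelson} to conclude $\rank(C)=\rank(R)=k$.'' Your fleshed-out version (normalization, Weyl's inequality to get $\rank(\hat R)\geq k$, independence of the two sampling events, Corollary \ref{CR_RANK_U}, and Remark \ref{REM:CUR} for the moreover statement) is exactly the intended argument.
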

\begin{proof}
The proof is the same as that of Theorem \ref{THM:ColRowChnM} \textit{mutatis mudandis}, where one applies Theorem \ref{THM:Stability} rather than Theorem \ref{THM_Rudelson} to conclude that $\rank(C)=\rank(R)=k$.
\end{proof}

Note that by assumption, the probability of success in Theorem \ref{THM:StableCUR} is at least $(1-2\exp(-c\eps^4))^2$.  Additionally, the hypotheses of Theorem \ref{THM:StableCUR} imply that uniform sampling of rows and columns yields a CUR decomposition of $A$ with high probability.  Indeed, more generally, Theorem \ref{THM:StableCUR} implies that as long as $\tilde{p}_i,\tilde{q_i}\geq\eps_0>0$, then sampling columns and rows according to these probabilities yields a valid CUR decomposition with high probability as long as $\eps^2\sqrt{\delta}<2\eps_0$.

Now we may use Theorem \ref{THM:StableCUR} to provide guarantees for when an underlying CUR decomposition is obtained for a CUR approximation of a low-rank plus noise matrix.  Our first observation is the following, which essentially states that uniformly sampling rows and columns of $\tilde{A}$ still yields $A=CU^\dagger R$ with high probability; this is the first result of this kind that does not use any additional assumptions about the matrix $\tilde{A}$ such as coherency (e.g. \cite{DemanetWu}).

\begin{corollary}\label{COR:Uniform}
Let $\tilde{A}=A+E$ with $A$ having stable rank $r$ and rank $k$.  Let $\alpha:= \frac{1}{\sqrt{m}}\min\{\frac{\|A\|_F}{\|A(i,:)\|_2}:A(i,:)\neq0\}$ and $\beta:=\frac{1}{\sqrt{n}}\min\{\frac{\|A\|_F}{\|A(:,j)\|_2}:A(:,j)\neq0\}$.  Let $\eps,\delta\in(0,1)$ be such that $\frac{\eps^2\sqrt{\delta}}{2}<\min\{\alpha,\beta\}$ and $\eps<\frac{\sigma_k(A)}{\|A\|_2}$.  Then sampling $d_1,d_2\geq C\left(\frac{r}{\eps^4\delta}\right)\log\left(\frac{r}{\eps^4\delta}\right)$ columns and rows of $\tilde{A}$ uniformly with replacement yields $\tilde{C},\tilde{U},\tilde{R}$ such that with probability at least $(1-2\exp(-\frac{c\alpha^2}{\delta}))(1-2\exp(-\frac{c\beta^2}{\delta}))$, which is at least $(1-2\exp(-c\eps^4))^2$,
\[\rank(U) = k\;\;\text{and}\;\;A=CU^\dagger R.\]
\end{corollary}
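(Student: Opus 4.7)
The plan is to obtain Corollary \ref{COR:Uniform} as a direct consequence of Theorem \ref{THM:StableCUR}, exploiting the fact that uniform sampling probabilities do not depend on the matrix being sampled. Concretely, sampling indices $I \subset [m]$ and $J \subset [n]$ uniformly with replacement and then forming $\tilde{C} = \tilde{A}(:,J)$, $\tilde{R} = \tilde{A}(I,:)$, $\tilde{U} = \tilde{A}(I,J)$ produces the same random index sets as would be produced if one sampled uniformly from $A$ directly. Hence the corresponding submatrices $C = A(:,J)$, $R = A(I,:)$, $U = A(I,J)$ of the low-rank part are also random submatrices drawn by uniform sampling of indices, and Theorem \ref{THM:StableCUR} may be applied to $A$ with probabilities $\tilde{p}_i = 1/m$, $\tilde{q}_j = 1/n$.

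The next step is to verify the key hypothesis $\tilde{p}_i \geq \alpha_i^2 \, \|A(i,:)\|_2^2 / \|A\|_F^2$ of Theorem \ref{THM:StableCUR} and identify the resulting minimum $\alpha$. Setting $\alpha_i := \|A\|_F / (\sqrt{m}\,\|A(i,:)\|_2)$ when $A(i,:) \neq 0$ (and $\alpha_i := 1$ otherwise) makes this an equality: $1/m = \alpha_i^2 \, \|A(i,:)\|_2^2 / \|A\|_F^2$. The overall minimum $\alpha := \min_i \alpha_i$ is exactly the quantity
\[
\alpha = \frac{1}{\sqrt{m}} \min\left\{ \frac{\|A\|_F}{\|A(i,:)\|_2} : A(i,:) \neq 0 \right\}
\]
appearing in the corollary. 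A directly symmetric computation produces the corresponding $\beta$ from column norms. The remaining hypotheses of Theorem \ref{THM:StableCUR}, namely $\eps^2\sqrt{\delta}/2 < \min\{\alpha,\beta\}$, $\eps < \sigma_k(A)/\|A\|_2$, and the sampling complexity lower bounds on $d_1$ and $d_2$, are assumed verbatim in the corollary.

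Applying Theorem \ref{THM:StableCUR} then yields $\rank(U) = k$ and $A = CU^\dagger R$ with probability at least $(1 - 2\exp(-c\alpha^2/\delta))(1 - 2\exp(-c\beta^2/\delta))$. To derive the simpler bound $(1 - 2\exp(-c\eps^4))^2$, I would apply the hypothesis $\eps^2\sqrt{\delta}/2 < \min\{\alpha,\beta\}$, which rearranges to $\alpha^2/\delta > \eps^4/4$ and $\beta^2/\delta > \eps^4/4$; the factor $1/4$ is absorbed into the universal constant $c$.

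There is no real obstacle here, since the argument is essentially bookkeeping: the substantive work is already carried out in Theorem \ref{THM:StableCUR}. The only conceptual point worth highlighting is the data-independence of uniform sampling, which is what allows the sampling to be performed on the noisy observation $\tilde{A}$ while the conclusion concerns the noise-free submatrices of $A$.
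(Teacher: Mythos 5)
Your proposal is correct and follows essentially the same route as the paper: the paper's proof also sets $\alpha_i^2 = \frac{1}{m}\frac{\|A\|_F^2}{\|A(i,:)\|_2^2}$ so that $\tilde{p}_i = 1/m$ satisfies the hypothesis of Theorem \ref{THM:StableCUR} with equality, argues symmetrically for $\beta$, and appeals to that theorem. Your explicit remarks on the data-independence of uniform sampling and on deriving the $(1-2\exp(-c\eps^4))^2$ bound are correct elaborations of points the paper leaves implicit.
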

\begin{proof}
With the definitions of $\alpha,\beta$, we have that $\tilde{p_i}=\frac1m \geq \alpha_i^2\frac{\|A(i,:)\|_2^2}{\|A\|_F^2}$ where $\alpha_i^2 = \frac1m \frac{\|A\|_F^2}{\|A(i,:)\|_2^2}$.  The analogous statement holds for $\tilde{q}_i=\frac1n$, whereby an appeal to Theorem \ref{THM:StableCUR} yields the desired conclusion.
\end{proof}

\begin{remark}\label{REM:Atilde}
Without writing the full statement, let us note that another corollary is that by the same estimate as we did for the stable rank of $\tilde{A}$, we have that 
\[ \tilde{p}_i = \frac{\|\tilde{A}(i,:)\|_2^2}{\|\tilde{A}\|_F^2} \geq \left(\frac{1-\frac{\|E\|_F}{\|A\|_F}}{1+\frac{\|E(i,:)\|_2}{\|A(i,:)\|_2}}\right)^2\frac{\|A(i,:)\|_2^2}{\|A\|_F^2}=:\alpha_i^2p_i.\]
Thus sampling columns and rows of $\tilde{A}$ can ensure that the underlying CUR decomposition is valid for $A$ as long as $\eps,\delta$ are small enough. To achieve this, for example, one could try to estimate the signal to noise ratio to obtain an estimate for $\alpha_i$ in the above expression.
\end{remark}

\begin{remark}
Embedded in the assumptions of Corollary \ref{COR:Uniform} is a requirement about the sparsity of rows and columns of $A$, which one would expect to have in order to guarantee success of uniform sampling.  Indeed, consider the extreme case when $A$ consists of a single nonzero entry.  In this case, $\alpha=\frac{1}{\sqrt{m}}$, and the requirement on $\eps$ is such that approximately $m$ rows need to be sampled to guarantee that the single meaningful column is selected, which makes sense given the fact that the rows are chosen uniformly at random.
\end{remark}

\subsection{Survey of Sampling Methods}\label{SEC:LiteratureSurvey}
As mentioned previously, deterministically choosing columns and rows of a given matrix to form a good CUR approximation is often costly, and random sampling can give good approximations with much lower cost.  In this section, we will survey the results in the literature surrounding sampling of columns and rows; these algorithms are useful even when $A$ is not low rank. Here, we will not assume anything on the rank of $A$ or its stable rank, and so the letters $k,r,$ and $c$ will be used for the order of low rank approximation, number of chosen rows, and number of chosen columns, respectively.

\subsubsection{Randomized Algorithms}

For randomized sampling algorithms, there are two primary ways to sample: independently with replacement, or using Bernoulli trials.  There are essentially three main distributions used for sampling with replacement: uniform \cite{DemanetWu}, column and row lengths \cite{DKMIII,KannanVempala}, and leverage scores \cite{DMM08}.  The probability distributions on the columns are thus given by
\[p_i^\text{lev,k}:= \frac{1}{k}\|V_k(i,:)\|_2^2,\quad p_i^\text{col}:=\frac{\|A(:,i)\|_2^2}{\|A\|_F^2},\quad p_i^\text{unif}:=\frac{1}{n}, \quad i\in[n],\]
respectively; the distributions for the rows are defined analogously.  Note that for leverage scores, $A$ does not have to be rank $k$ in general, but the parameter $k$ determines how much the right singular vectors are truncated.

Evidently, uniform sampling is the easiest to implement, but it can fail to provide good results, especially if the input matrix is very sparse, for example.  On the other hand, leverage scores typically achieve the best performance because they capture the eigenspace structures of the data matrix, but this comes at the cost of a higher computational load to compute the distribution as it requires computing the truncated SVD of the initial data.  Column/row length sampling typically lies between both of the others in terms of performance as well as computational complexity.

In the algorithms which sample in this manner, the number of rows and columns chosen is fixed and deterministic, but when Bernoulli trials are used of course one only knows the expected number.

There are many randomized algorithms in the literature which attempt to construct a good CUR approximation.  Among these, there are norm guarantees for Frobenius error, and less commonly spectral error; there are relative and additive error guarantees, and there are many choices for the middle matrix which we have called $U$ beyond simply choosing $A(I,J)$ or $C^\dagger AR^\dagger$.  Since the literature is very scattered, we provide some summary here of the types of results in existence.

\subsection{Relative Error Bounds}

Relative error bounds are those of the form
\[\|A-CUR\|^2\leq f(\eps)\|A-A_k\|^2, \]
where $f$ is some hopefully small function of $\eps$, typically $1+\eps$.  Table \ref{TAB:RelativeError} provides a summary of the somewhat sparse literature giving relative error bounds, almost all of which are in the Frobenius norm.  Details about the choice of $U$ will be discussed following the table.

\begin{table}[h]
\centering
\scriptsize
		\caption{Relative Error CUR Approximations}
		\label{TAB:RelativeError}
		\begin{tabular}{||c|c|c|c|c|c|c|c|}
					\hline
				 Norm & $U$ & $f(\eps)$ & Sampling  & \# Columns (c) &\# Rows (r) & Complexity & Reference \\
					\hline \hline
     F &$(DA(I,J))^\dagger$ & $1+\eps$ & Leverage Scores & $O(\frac{k^2}{\eps^2})$ & $O(\frac{c^2}{\eps^2})$ & $O(mnk)$ & \cite{DMM08} \\
				\hline
				F &$(DA(I,J))^\dagger$ & $1+\eps$ & Leverage (Bernoulli) & $O(\frac{k\log k}{\eps^2})$ & $O(\frac{c\log c}{\eps^2})$ & $O(mnk)$ & \cite{DMM08} \\
				\hline
		 F &$C^\dagger AR^\dagger$ & $2+\eps$ & Leverage Scores & $O(\frac{k\log k}{\eps^2})$ & $O(\frac{k\log k}{\eps^2})$ & $O(mnk)$ &\cite{DMPNAS}  \\		
        \hline
        F &$MC^\dagger AR^\dagger$ & $1+\eps$ & Col/Row Lengths & $O(k+\frac{k}{\eps})$& $O(k+\frac{k}{\eps})$ & $O(\frac{mn^3k}{\eps})$ & \cite{BoutsidisOptimalCUR}  \\
				\hline
         F &$C^\dagger AR^\dagger$ & $1+\eps$ & Adaptive Sampling & $\frac{2k}{\eps}(1+o(1))$& $c+\frac{c}{\eps}$ & See below &\cite{WZ_2013}\tablefootnote{The error bounds in \cite{WZ_2013} are in expectation.}  \\
         \hline
         2 & $C^\dagger AR^\dagger$ & const. & DEIM & $k$ & $k$ & $O(mnk)$ & \cite{SorensenDEIMCUR}  \\
				\hline
		\end{tabular}
\end{table}

To the authors' knowledge, the results in Table \ref{TAB:RelativeError} are all of the ones available at present as relative error bounds are much more difficult to come by.  In \cite{DMM08}, the matrix $D$ is a diagonal scaling matrix which takes $A(I,J)$ and scales its $i$-th row by a scalar multiple of $1/p_i^{lev,k}$.  This scaling is done so that the probabilistic argument works, but the algorithm given therein cannot achieve an exact CUR decomposition even in the low rank case.  For \cite{DMM08}, Leverage Score sampling corresponds to sampling independently with replacement as described above, whereas Leverage (Bernoulli) means that Bernoulli trials utilizing the leverage scores are used to select an expected number of columns and rows.

Boutsidis and Woodruff \cite{BoutsidisOptimalCUR} give algorithms for computing optimal CUR approximations in several senses: they achieve optimal sampling complexity of $O(\frac{k}{\eps})$ and run in relatively low polynomial time, while providing relative error bounds in the Frobenius norm.  Some of their algorithms are randomized, but they also give a deterministic polynomial time algorithm for computing CUR approximations.  Essentially all of their approximations use $U = MC^\dagger AR^\dagger$, where $M$ is a judiciously (and laboriously) chosen matrix which enforces the desired rank, i.e. $\rank(U)=k$ where $k$ is given \textit{a priori}.  There are three algorithms given in \cite{BoutsidisOptimalCUR}, each of which has the same sampling complexity and error guarantees, so we only report one entry in Table \ref{TAB:RelativeError} (the running complexity reported is for the deterministic algorithm, but the randomized algorithms therein have smaller complexity).  One final note: the algorithms in \cite{BoutsidisOptimalCUR} are shown to give good CUR approximations with constant probability, but with very low constants (0.16 in one case), so to obtain a good approximation the algorithm should be run many times.

The complexity of the algorithm in \cite{WZ_2013} is 
$O((m+n)k^3\eps^{-\frac23}+mk^2\eps^{-2}+nk^2\eps^{-4}+mnk\eps^{-1})$.  The adaptive sampling procedure is a more sophisticated one which first uses the near-optimal column selection algorithm of Boutsidis, Drineas, and Magdon-Ismail \cite{BoutsidisNearOptimalCSSP} to select $O(\frac{k}{\eps})$ columns of $A$, then uses the same algorithm to select $c$ rows of $A$, say $R_1$, and the final step chooses $\frac{c}{\eps}$ more rows by sampling using leverage scores of the residual $A-AR_1^\dagger R_1$.  

 The DEIM method for selecting rows and columns chooses exactly $k$ columns and rows, with the tradeoff of only a constant error bound, which is given by $(\|W_{k,I}^{-1}\|_2+\|V_{k,J}^{-1}\|_2)^2$, where the index sets $I$ and $J$ are sets of size $k$ chosen via the DEIM algorithm \cite{SorensenDEIMCUR}, and $W_{k,I}$, $V_{k,J}$ are as in Proposition \ref{PROP:NormTerms}.  To our knowledge, this is the only relative error bound in the spectral norm.

An interesting paper by Yang et.~al.~gives sampling-dependent error bounds for the Column Subset Selection Problem, which can be used twice (once on $A$ and once on $A^*$) to give a CUR approximation \cite{Yang}.

\subsection{Additive Error Bounds}
Additive error bounds are those of the form
\[ \|A-CUR\|^2\leq \|A-A_k\|^2+g(\eps,A),\]
where $g$ is typically $O(\eps\|A\|^2)$.  Such guarantees are relatively easier to bome by compared to relative bounds; however, they are less useful in practice since $\eps\|A\|$ may be quite large. On the other hand, there exist spectral norm guarantees of this form as opposed to the case of relative error bounds which have only been found for the Frobenius norm.  Table \ref{TAB:AdditiveError} summarizes some of the canonical additive error bounds in existence.

\begin{table}[h]
\centering
\scriptsize
		\caption{Additive Error CUR Approximations choosing $c$ columns and $r$ rows.}
		\label{TAB:AdditiveError}
		\begin{tabular}{|c|c|c|c|c|c|c|c|}
					\hline
				Norm & $U$ & $g(\eps,A)$ &  Sampling  & $c$ & $r$ & Complexity & Ref \\
					\hline \hline
                    			$2$	& $(C^TC)_k^{-1} (C(I,:))^T $ &  $\eps \|A\|_F^2$ &               			Col/Row Lengths 
                    			 & $O(\frac{1}{\eps^4})$ & $ O(\frac{k}{\eps^2})$ & $O((m+r)c^2+nr+c^3)$ &\cite{DKMIII}\\ 
                                \hline
			$F$	& $(C^TC)_k^{-1} (C(I,:))^T $ & $\eps \|A\|_F^2$ & Col/Row Lengths & $O(\frac{k}{\eps^4})$ & $O(\frac{k}{\eps^2})$ & $O((m+r)c^2+nr+c^3)$ & \cite{DKMIII}\\
            \hline
			$2$ & $(A(I,J)_k)^\dagger$ &   $\eps \sum A_{ii}^2$ & $p_i = A_{ii}^2/\sum A_{jj}^2$ & $\frac{1}{\eps^4}$ & $\frac{1}{\eps^4}$& $O(m+c^3)$& \cite{DM05}$^{\ref{foot}}$		  \\	\hline	
			$F$ & $(A(I,J)_k)^\dagger$ &   $\eps \sum A_{ii}^2$ & $p_i = A_{ii}^2/\sum A_{jj}^2$ & $\frac{k}{\eps^4}$ & $\frac{k}{\eps^4}$& $O(m+c^3)$& \cite{DM05}\tablefootnote{\label{foot}In \cite{DM05}, the data matrix is required to be symmetric, positive semi-definite.}\\		\hline
			$2$	& $(A(I,J)_k)^\dagger $ & $\frac{\sqrt{mn}}{\mu k\log m}$ & Uniform & $O(\mu k\log m)$ & $O(\mu k \log m)$ & $\tilde{O}(k^3)$ & \cite{DemanetWu}\\
            \hline
		\end{tabular}
\end{table}

The restriction in \cite{DM05} to symmetric positive semi-definite matrices is common in the machine learning literature, as kernel and graph Laplacian matrices are of high importance in data analysis methods (e.g. spectral clustering, for one).  In this setting, the CUR approximations are of the form $CU^\dagger C^T$, and the method is called the Nystr\"{o}m method (see \cite{GittensMahoney} for an exposition and history).  Note also that the additive error $g(\eps,A)$ of \cite{DKMIII} is in fact the same (Frobenius norm) regardless of the norm the error is measured in.

The error bounds of Chiu and Demanet \cite{DemanetWu} reported in Table \ref{TAB:AdditiveError} require the additional assumption that the matrix $A$ has left singular vectors which are $\mu$--coherent, meaning that $\max_{i,j}|W_{i,j}|\leq \sqrt{\mu/n}$.  Coherency is a common assumption in Compressed Sensing (e.g. \cite{CandesRomberg}) and is a notion that the columns of an orthogonal basis are somewhat well spread out.  Additional bounds are given in \cite{DemanetWu}, some of which are more general than the one reported here, and some of which are tighter bounds under stricter assumptions, but for brevity we report only the main one in Table \ref{TAB:AdditiveError}.  Here, $\tilde{O}$ suppresses any logarithmic factors.

\section{Numerical simulations}\label{SEC:Numerics}

Here, we illustrate the performance of some of the CUR approximations mentioned previously on matrices of the form $\widetilde{A}=A+E$, where $A$ is low-rank and $E$ is a small perturbation matrix.  In the experiments, we will take $E$ to consist of i.i.d. Gaussian entries with mean zero and a given variance $\sigma^2$ which will change from experiment to experiment.  As a side note, we call the reader's attention to the fact that there is a package called rCUR for implementing various CUR approximations in R \cite{RCUR}.  The experiments here were performed in Matlab since the full flexibility of the rCUR package was not needed.

\begin{experiment}
First, we examine the effect of sampling scheme for the columns and rows and its effect on the approximation suggested by Theorem \ref{THM:CUR}.  We generate a Gaussian random matrix $A$ of size $500\times 500$  and force $A$ to be rank $k$, where $k$ varies from $1$ to $50$.  $A$ is perturbed by a Gaussian random matrix $E$ whose entries have standard deviation $10^{-5}$.  To make results easier to interpret, we normalize the matrices so that $\|A\|_2=1$, and $\|E\|_2=\sigma$ in each experiment. Based on the sampling results in Table \ref{TAB:RelativeError}, we choose $k\log k$ rows and columns of $A$ and form the corresponding matrices $\tilde{C},\tilde{U}$, and $\tilde{R}$.  We then compute the relative error $\|A- \tilde{C}\tilde{U}^\dagger \tilde{R}\|_2/\|A\|_2$.  Figure \ref{fig:MRankError_1000} shows the results, from which we see that when we sample $k\log k$ rows and columns, the relative error is essentially independent of the size of $k$ and of the sampling probabilities.

\begin{figure}[ht]
    \centering
    \begin{subfigure}[b]{0.49\linewidth}
		\includegraphics[width=\textwidth]{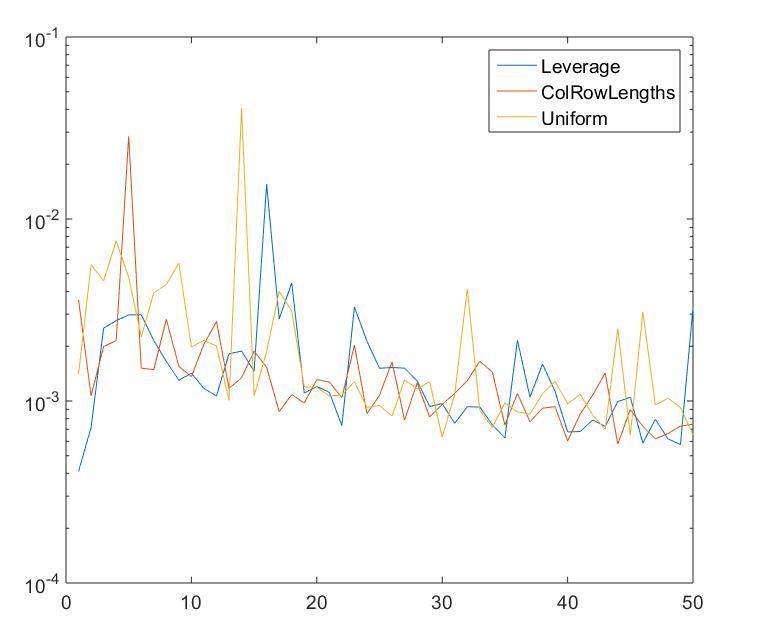}
		\caption{}	
	\label{FIG:RNN1}
	\end{subfigure}
	\begin{subfigure}[b]{0.49\linewidth}
		\includegraphics[width=\textwidth]{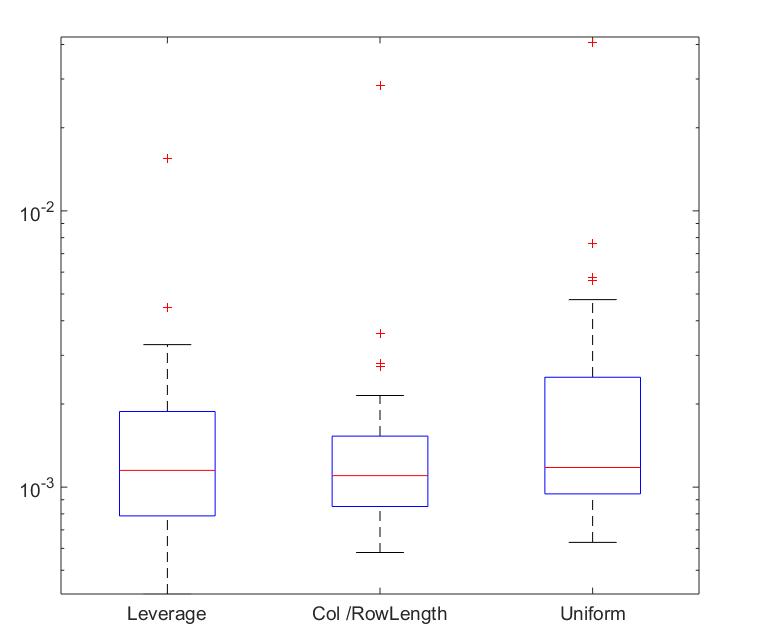}
		\caption{}
		\label{FIG:RNN2}
	\end{subfigure}
    \caption{(a)  $\|A-\tilde{C}\tilde{U}^\dagger\tilde{R}\|_2/\|A\|_2$ vs. $\rank(A)$; (b) Box plot of $\|A-\tilde{C}\tilde{U}^\dagger\tilde{R}\|_2/\|A\|_2$ with respect to different column/row sampling schemes.  The average is taken over all values of the number of columns and rows sampled.
    }
    \label{fig:MRankError_1000}
\end{figure} 
\end{experiment}

\begin{experiment}
To test how the sizes of the sampled submatrices influence the relative error, we consider a matrix of size $400\times 400$ with fixed rank $k=10$. We then choose $n$ columns and rows, where $n$ is a paramneter that varies from $15$ to $50$.  As in Experiment 1, we sample columns and rows according to the three probability methods described above. For each fixed $n$, we repeat the column and row sampling process $100$ times and consider the average of the relative errors.  Beginning with a random $A$ as in Experiment 1, Figure \ref{FIG:sizeofsubm_error} shows the results.   
In Figure \ref{FIG:sizeofsubm_error} (b), we see that the number of sampled columns and rows has relatively little effect on the error (as long as more than the target rank are chosen of course), whereas for the full matrix $A$, we see almost no difference in sampling schemes, with perhaps Column/Row length being slightly preferred. Figure \ref{FIG:sizeofsubm_error} (c) shows the same experiment for a sparse matrix $A$ and the results is as expected that uniform sampling yielded larger variation and error, which makes sense because unlike the other sampling schemes it is not unlikely that a $0$ column or row will be chosen. 

 \begin{figure}[h]
    \centering
     \begin{subfigure}[b]{0.32\linewidth}
		\includegraphics[width=\textwidth]{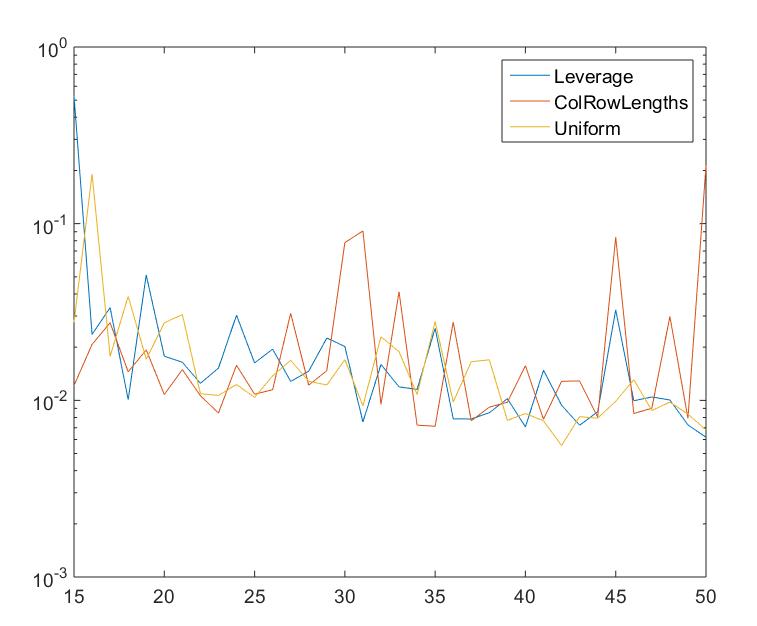}
		\caption{}	\label{FIG:sizeofsubm_error1}
	\end{subfigure}
	\begin{subfigure}[b]{0.32\linewidth}
		\includegraphics[width=\textwidth]{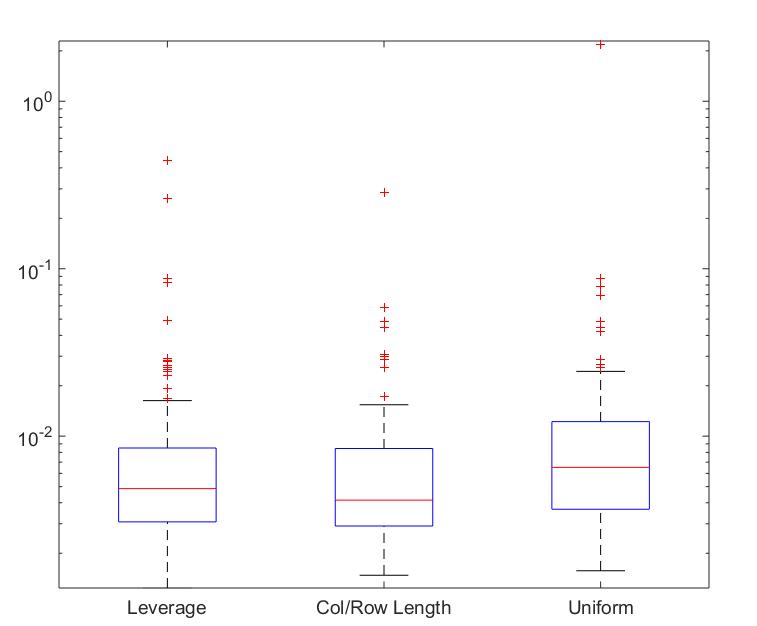}
		\caption{}
		\label{FIG:sizeofsubm_error2}
	\end{subfigure}
	\begin{subfigure}[b]{0.32\linewidth}
		\includegraphics[width=\textwidth]{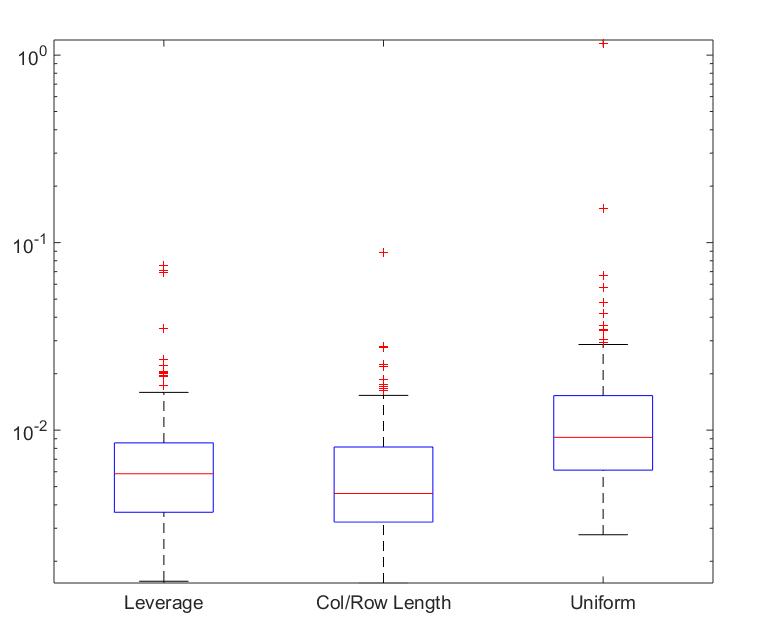}
		\caption{}
		\label{FIG:sizeofsubm_error_sp2}
	\end{subfigure}
    \caption{(a) $\|A-\tilde{C}\tilde{U}^\dagger\tilde{R}\|_2/\|A\|_2$ vs. $n$, the number of columns/rows chosen.  (b) Box plot for 100 trials of column/row sampling of relative error with respect to different column/row sampling schemes for $n=30$. (c) Same as (b) for a sparse random matrix $A$.}\label{FIG:sizeofsubm_error} 
\end{figure}

Figure \ref{fig:sizeofsubm_error_rd} shows the same experiment on a real data matrix coming from the Hopkins155 motion dataset \cite{Hopkins}. Here, we see the relative error decreasing and leveling out.  The given data matrix from Hopkins155 is approximately (but not exactly) rank 8 because it comes from motion data \cite{CosteiraKanade}.  The Hopkins155 data matrices are not normalized in any way, and this and other unreported experiments show that in that case, Leverage Score sampling tends to perform better.

\begin{figure}
    \centering
     \begin{subfigure}[b]{0.49\linewidth}
		\includegraphics[width=\textwidth]
		{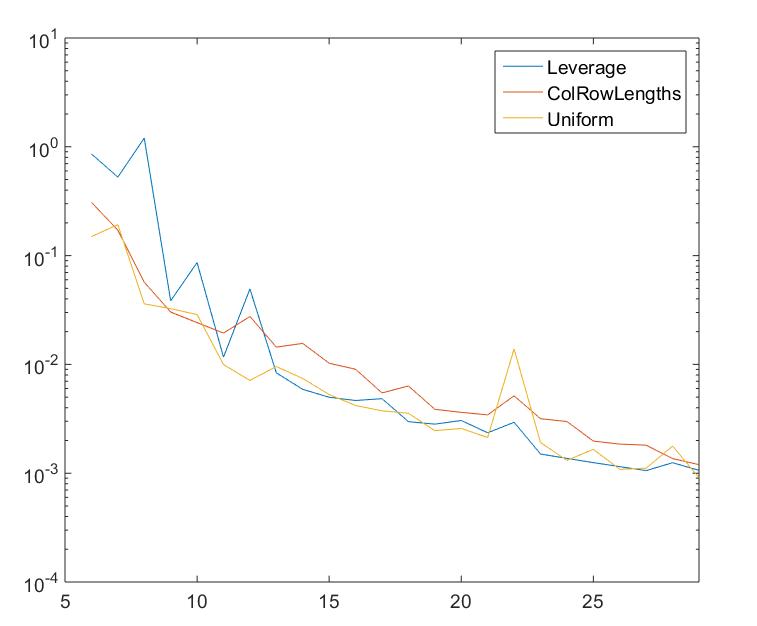}
		\caption{}	\label{FIG:sizeofsubm_error_rd1}
	\end{subfigure}
	\begin{subfigure}[b]{0.49\linewidth}
		\includegraphics[width=\textwidth]{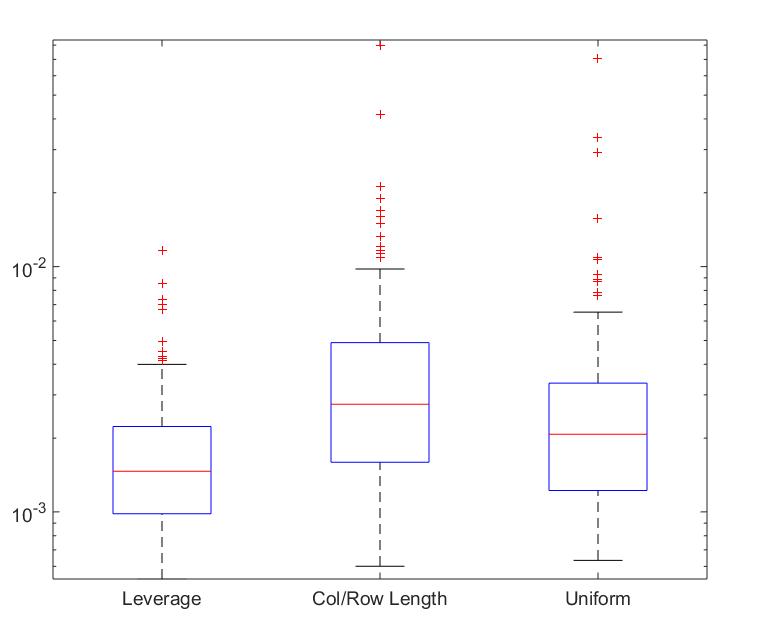}
		\caption{}
		\label{FIG:sizeofsubm_error_rd2}
	\end{subfigure}
    \caption{(a) Relative spectral error vs. number of columns and rows sampled for real data matrix from Hopkins155. (b) Box plot of relative spectral error with respect to different column/row sampling schemes for $n=21$.}
    \label{fig:sizeofsubm_error_rd}
\end{figure}
\end{experiment}

\begin{experiment}
In this simulation, we test how enforcing the low-rank constraint on $\tilde{U}$ will influence the relative error $\|A-\tilde{C}\tilde{U}_r^\dagger \tilde{R}\|_2/\|A\|_2$, where $r$ varies from $\rank(A)$ to $\rank(\tilde{U})$. We consider a $500\times 500$ matrix of rank $10$ perturbed by Gaussian noise with standard deviation $10^{-4}$. We randomly choose 60 columns and rows, and for each fixed $r$, we repeat this process 100 times and compute the average error. Figure \ref{fig:errorRankU} shows that if $r$ is closer to $\rank(A)$, the relative error is smaller as one might expect, while as the rank increases the error is saturated by the noise.

\begin{figure}
    \centering
    \begin{subfigure}[b]{0.49\linewidth}
		\includegraphics[width=\textwidth]{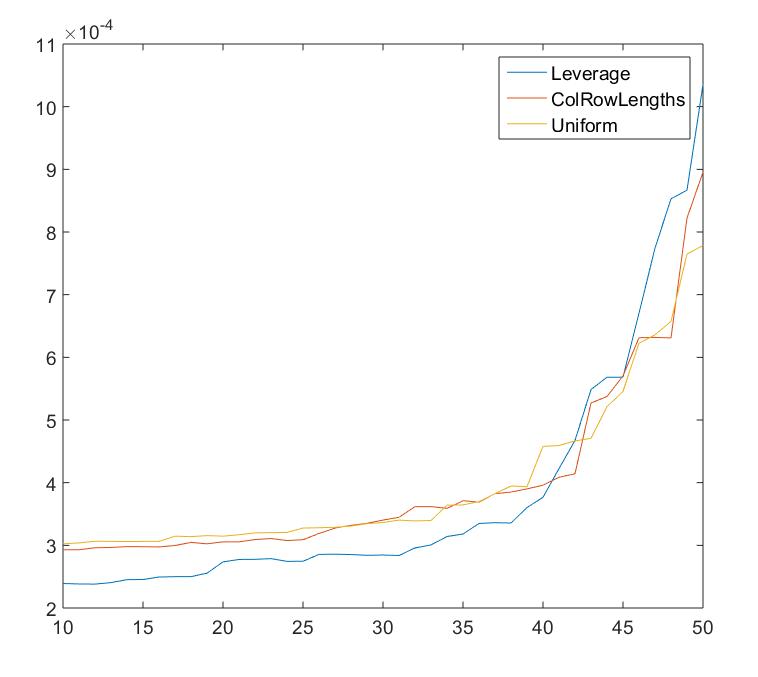}
		\caption{}	\label{FIG:errorRankU1}
	\end{subfigure}
	\begin{subfigure}[b]{0.49\linewidth}
		\includegraphics[width=\textwidth]
		{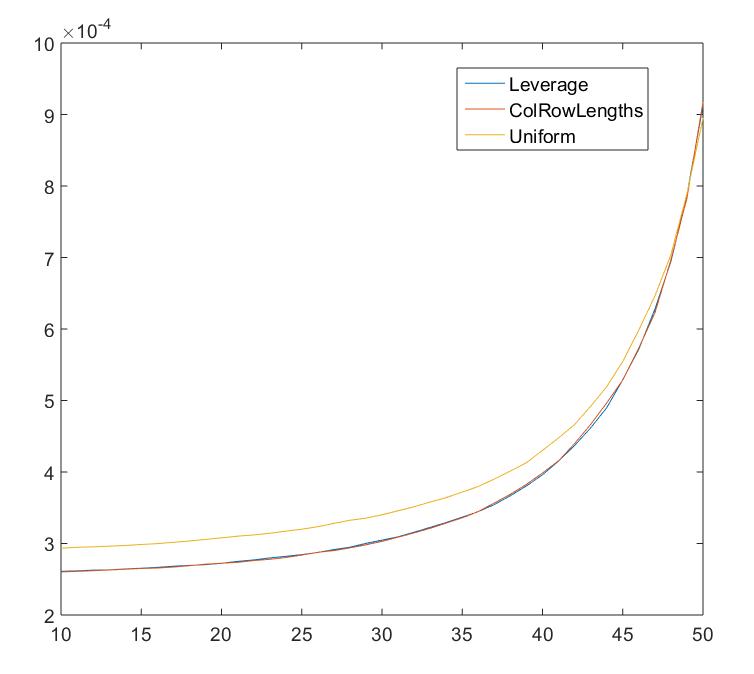}
		\caption{}
		\label{FIG:errorRankU2}
	\end{subfigure}
    \caption{(a)  $\|A-\tilde C\tilde{U}_r^\dagger\tilde{R}\|_2/\|A\|_2$ vs. $r$ which varies from $\rank(A)$ to $50$ for one specific choice of columns and rows, (b) Averaged errors over 100 trials of sampling columns/rows. }
    \label{fig:errorRankU}
\end{figure}
    
\end{experiment}

\begin{experiment}\label{EXP:ErrorBounds}
We test our analysis error bound in Proposition \ref{PROP:Perturbation}  and Theorem \ref{THM:PB} for $\|A-\tilde C \tilde U^\dagger \tilde R\|$. For this simulation, we consider a $500\times500$ matrix with rank 10. This matrix is perturbed  by a Gaussian random matrix with mean 0 and standard deviation $10^{-4}$. We choose 60 columns and rows via leverage scores, column/row lengths, and uniformly as before, and we compute the ratios between the right-hand side with the left-hand side in Proposition \ref{PROP:Perturbation}, Proposition \ref{PROP:PB}, and Theorem \ref{THM:PB}. This is repeated 200 times, and the average ratios are shown in Figure \ref{fig:CUR_TA_errorPro4.6} for Proposition \ref{PROP:Perturbation}, Figure \ref{fig:CUR_TA_errorPro4.7} for Proposition \ref{PROP:PB}, and Figure \ref{fig:CUR_TA_errorTHM4.9} for Theorem \ref{THM:PB}. 

We note that the error bounds for Propositions \ref{PROP:Perturbation} and \ref{PROP:PB} are relatively good, with the latter being slightly better due to enforcement of the rank.  Since many overestimates were made in Theorem \ref{THM:PB}, the ratio is somewhat higher; however, the estimates therein are general, and not overly pessimistic.


\begin{figure}[h]
    \centering
    \begin{subfigure}[b]{0.32\linewidth}
		\includegraphics[width=\textwidth]
		{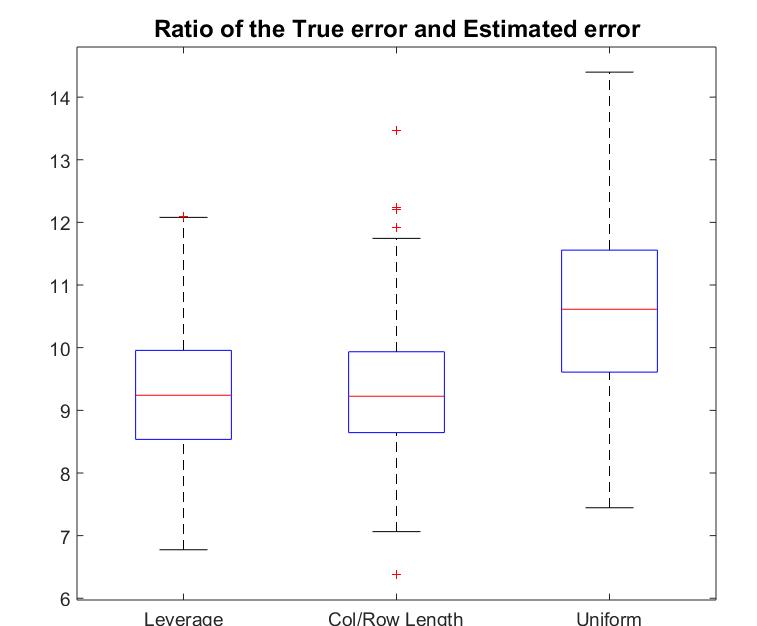}
		\caption{}	\label{fig:CUR_TA_errorPro4.6}
	\end{subfigure}
	\begin{subfigure}[b]{0.32\linewidth}
		\includegraphics[width=\textwidth]{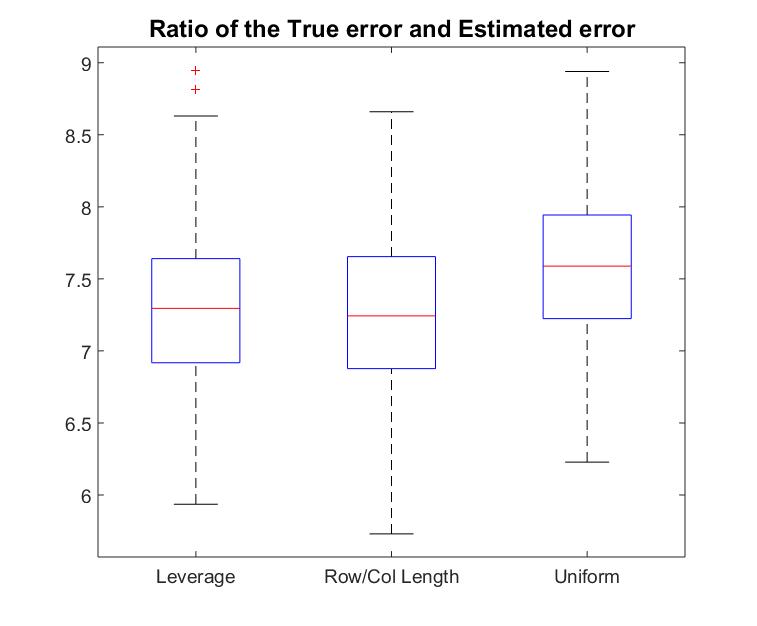}
		\caption{}
		\label{fig:CUR_TA_errorPro4.7}
	\end{subfigure}
	\begin{subfigure}[b]{0.32\linewidth}
		\includegraphics[width=\textwidth]{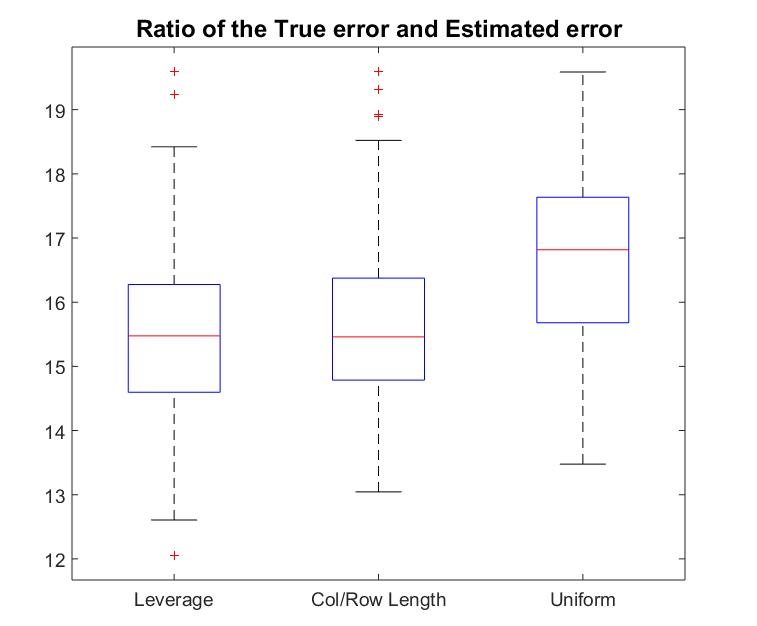}
		\caption{}
		\label{fig:CUR_TA_errorTHM4.9}
	\end{subfigure}
    \caption{(a) The ratio between our analytic error bounds  in Proposition \ref{PROP:Perturbation} and $\|A-\tilde{C}\tilde{U}^\dagger\tilde{R}\|$, (b) The ratio between our analytic error bounds  in Proposition \ref{PROP:PB} and $\|A-\tilde{C}\tilde{U}_k^\dagger\tilde{R}\|$, (c) The ratio between our analytic error bounds  in Theorem \ref{THM:PB} and  $\|A-\tilde{C}\tilde{U}_k^\dagger\tilde{R}\|_2$.}
    \label{FIG:ErrorRatios}
\end{figure}

\end{experiment}
\begin{experiment}\label{Exp:ATA_Mrank}
Figure \ref{FIG:ErrorRatios} illustrates the error bounds derived in Section \ref{SEC:Perturbation} only for a fixed size and rank of $A$.  This begs the question: do these parameters affect the error bounds? We first test how the rank of $A$ affects the ratios between the analytic error bounds in Theorem \ref{THM:PB} and the true error $\|A-\tilde{C}\tilde{U}_k^\dagger\tilde{R}\|_2$.  To do this, we randomly generate a matrix $A$ of the size $500\times 500$, but force $A$ to be rank $k$, which varies between $1$ and $25$. As in Experiment \ref{EXP:ErrorBounds}, we perturb $A$ by a Gaussian random matrix with mean 0 and standard deviation $10^{-4}$. And  we choose 60 columns and rows via leverage scores, column/row lengths, and uniformly.  For each fixed rank $k$, we calculate the ratio of the error bound with the true error for 200 choices of columns and rows, and report the results in Figure \ref{fig:CUR_ARTA_Mrank}.  We see that the error bound degrades as the rank of $A$ increases.
\end{experiment}

\begin{experiment}
  Here, we test how the size of $A$ will influence the ratios between the estimated error bounds in Theorem \ref{THM:PB} and the true error $\|A-\tilde{C}\tilde{U}_k^\dagger\tilde{R}\|_2$. The setup of this experiment is the same as the one in Experiment \ref{Exp:ATA_Mrank} except that the rank of $A$ is $10$, but the size of $A$ varies from 100 to 500. The simulation results are shown in Fig \ref{fig:CUR_ARTA_Msize}, where we see that the size of $A$ does not influence the ratios overly much, but there is some indication that the bounds are better for larger matrices, which bodes well for utility in big data applications.

\begin{figure}[h]
\centering
\begin{subfigure}[b]{0.49\linewidth}
		\includegraphics[width=\textwidth]{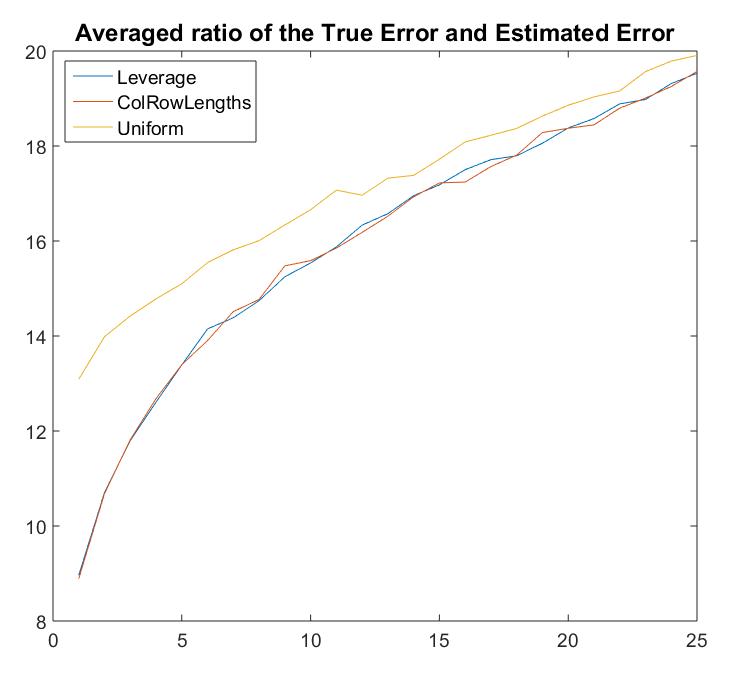}
		\caption{}	\label{fig:CUR_ARTA_Mrank}
	\end{subfigure}
	\begin{subfigure}[b]{0.49\linewidth}
		\includegraphics[width=\textwidth]{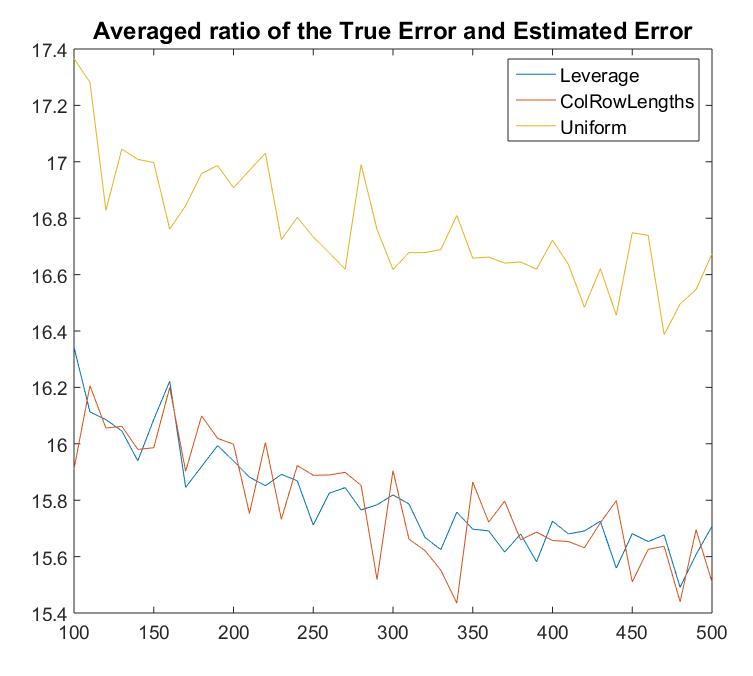}
		\caption{}	\label{fig:CUR_ARTA_Msize}
	\end{subfigure}
\caption{(a) The averaged ratio between our analytic error bounds in Theorem \ref{THM:PB} and  $\|A-\tilde{C}\tilde{U}_k^\dagger\tilde{R}\|_2$ by varying the rank but fixing the size of $A$. (b) Same ratio as (a) when varying the size of $A$ but fixing its rank.}
\end{figure}
\end{experiment}

\section{Rank estimation}\label{SEC:Rank}
The experiments in Section \ref{SEC:Numerics} indicate that having a good estimate for the rank of $A$ is crucial to obtaining a good CUR approximation to it.  There are many ways to do this, including the simple method of looking for a steep drop in the singular values of $\tilde{A}$.  Here, we give a simple algorithm derived from our previous analysis to estimate the rank of a matrix perturbed by Gaussian noise.

Again supposing that $\tilde{A}=A+E$ with $\rank(A)=k$ and $E$ being an i.i.d. Gaussian matrix with $N(0,\sigma^2)$ entries.  Suppose $U=A(I,J)$ such that $\rank(U)=k$, and hence $A=CU^\dagger R$.  Supposing we have selected rows and columns to form $\tilde{U}$, it follows from Theorem \ref{THM:Stewart} that if $s>k$, then 
\[\|\tilde{U}_s^\dagger\|_2\geq\frac{1}{\|\tilde{U}_s-U\|_2}\geq\frac{1}{2\|E(I,J)\|_2}.\]  The second inequality above follows from the same estimation as done in Section \ref{SEC:Perturbation}, which shows that $\|\tilde{U}_s-U\|_2\leq2\|E(I,J)\|_2.$

In \cite[p. 138]{taotopics} it is shown that an $n\times n$ Wigner matrix $M$ in which all off-diagonal entries have mean zero and unit variance, and the diagonal entries have mean zero and bounded variance, has the following property asymptotically almost surely:
\[
(1+o(1))\sqrt{n}\leq\|M\|_{2}\leq(1+o(1))n.
\]
By using the symmetrization technique, we find that for an $m\times n$ random Gaussian matrix $E$ with mean 0 and variance $\sigma^2$, 
\begin{equation}
\|E\|_{2}\leq (1+o(1))\sigma \sqrt{mn}.
\end{equation}
Therefore, if $\|\tilde{U}^\dagger_s\|_2\leq1/(2\sigma\sqrt{|I||J|})$, then $\rank(U)\geq s$.  We may use this fact to obtain an upper bound for the rank of $U$, which we present as Algorithm \ref{algorithm iterative}.

\begin{algorithm}[h!]
	\SetKwInOut{Output}{Output}
	
	{\text {\bf Input:} $\tilde{A}=A+E$ with $E_{i,j}\sim N(0,\sigma^2)$}
	
	\Output{Upper bound for the rank of matrix $A\in\mathbb{R}^{m\times n}$}
	
	{Form a $CUR$ approximation of $\tilde{A}$ as follows.}
      
	{Select $I\subset\{1,2,\ldots,m \}$ and $J\subset\{1,2,\ldots,n \}$ randomly w.p. $p_i=\frac{\| \tilde{A}(i,:)\|_2^2}{\|\tilde{A}\|_F^2}$ and $q_j=\frac{\|\tilde{A}(:,j)\|_2^2}{\|\tilde{A}\|_F^2}$, respectively}

	{Set $\tilde{C}=\tilde{A}(:,J)$, $\tilde{R}=\tilde{A}(I,:)$, and $\tilde{U}=\tilde{A}(I,J)$}
	
	{Compute the full SVD of $\tilde{U}$: $\tilde{U} = W\Sigma V^*$}
	
	
	
	{Set $K=\min\{|I|,|J|\}$.}
	
	\For{$i = \min\{ |I|,|J|\}$  \KwTo $1$ }
	{
	   {Compute	$error=\|\Sigma_i^\dagger\|_2$}
	   
	   \If{$error< 1/(2*\sqrt{|I||J|}\cdot\sigma).$}
	   {{Set $K=i$.}
	   
	   {Break.}}
	}
	\text{\bf Output} {$K$}
	\caption{Rank Estimation.}\label{algorithm iterative}
\end{algorithm}

\begin{remark}
The complexity of Algorithm \ref{algorithm iterative} is dominated by the complexity of finding the CUR approximation of $A$. We have an expected number of rows and columns chosen $(s,r)$, and then SVD($\tilde{U}$) will cost $O(\min\{sr^2,s^2r\})$, and the CUR approximation of $\tilde{A}$ has the cost of finding $\tilde{U}^\dagger$, which is the same as the SVD of $\tilde{U}$.
\end{remark}

Let us now briefly illustrate Algorithm \ref{algorithm iterative} in a simulation by considering the relation between the estimated rank and the variance of the noise.  We randomly generate a $500\times 500$ matrix $A$ of rank $20$, and perturb it by i.i.d. Gaussian noise with mean $0$ and standard deviation $\sigma$ (which will vary).  Then we uniformly randomly select 60 rows and columns to generate $\tilde{U}$.  The relationship between the estimated rank (via the output of Algorithm \ref{algorithm iterative}) and the standard deviation of the noise is shown in Figure \ref{FIG:RNV}. We see that when the standard deviation of  of the Gaussian noise is less than $10^{-3}$, the estimated rank is exactly the rank of the original noise-free matrix, and the bound quickly degrades subsequently.

\begin{figure}[h!]
\includegraphics[scale=0.5]{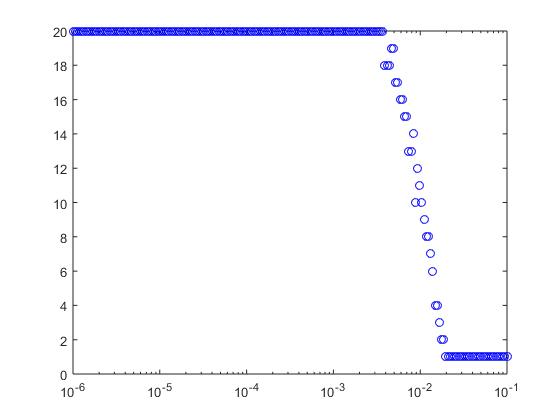}
\caption{The relation between the estimated ranks and the standard deviation of the noise.}\label{FIG:RNV}
\end{figure}

Figure \ref{FIG:RNN} shows the effect of the size of the submatrix $U$ on the rank estimation.  In each case, the noise is fixed, but the number of rows and columns increases.  Evidently, for low levels of noise, we see that choosing very close to 20 columns and rows yields a good rank estimation, while for larger noise, it is better to choose more rows and columns to form the CUR approximation.  For similar experimental results on CUR approximations, see \cite{SPECTS}

\begin{figure}
	\centering
	\begin{subfigure}[b]{0.49\textwidth}
		\includegraphics[width=\textwidth]{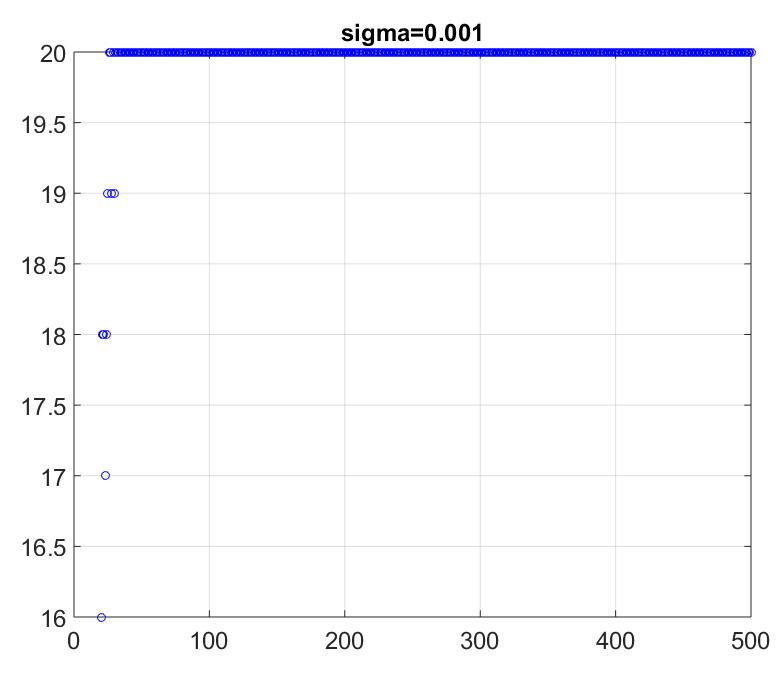}
		\caption{}
	\end{subfigure}
	\begin{subfigure}[b]{0.49\textwidth}
		\includegraphics[width=\textwidth]{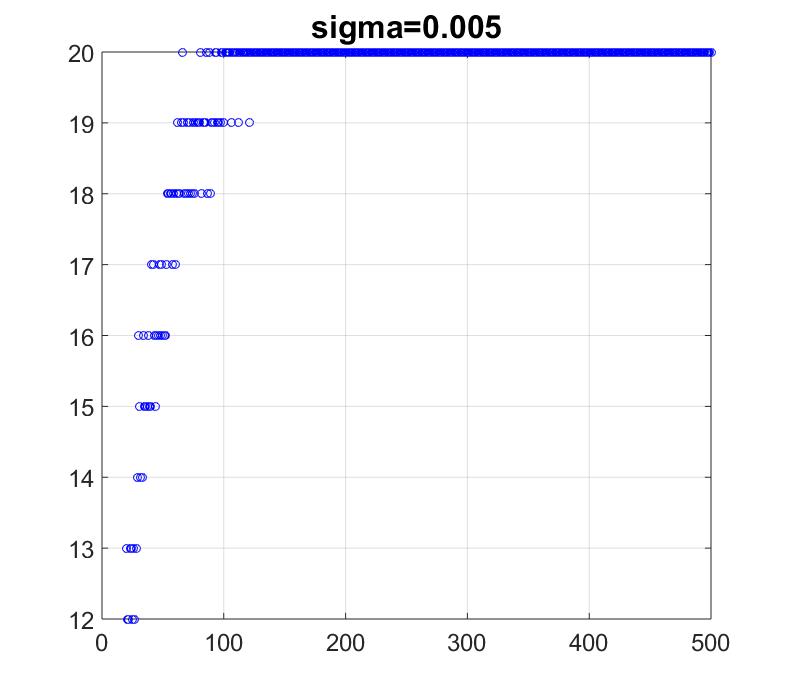}
		\caption{}
	\end{subfigure}
	\caption{The relation between the estimated ranks and the size of the submatrix $U$.}
	\label{FIG:RNN}
\end{figure}

\section*{Acknowledgements}  Initial work for this article was done while the K. H. was an Assistant Professor at Vanderbilt University.  K. H. is partially supported by the National Science Foundation TRIPODS program, grant number NSF CCF--1423411. LX.H. is partially supported by NSF Grant DMS-1322099.

We are indebted to Amy Hamm Design for making our sketches of Figures \ref{FIG:CUR}, and \ref{FIG:CURSpace2} a reality.  K. H. thanks Vahan Huroyan for many helpful discussions related to this work, and David Glickenstein and Jean-Luc Bouchot for comments on a preliminary version of the manuscript.

\bibliographystyle{plain}
\bibliography{HammHuang}

\appendix

\section{Proof of Theorem \ref{THM:Stability}}\label{SEC:AppendixProofStability}

The first ingredient in the proof is the following:
\begin{theorem}[{\cite[Theorem 3.1]{Rudelson_2007}}]\label{THM:RVLargeNumbers}
Let $y$ be a random vector in $\K^n$ which is uniformly bounded almost everywhere, i.e. $\|y\|_2\leq M$.  Assume for normalization that $\|\E(y\otimes y)\|_2\leq1$.  Let $y_1,\dots,y_d$ be independent copies of $y$.  Let \[ a:= C_0\sqrt{\frac{\log d}{d}}M. \] Then
\begin{enumerate}
    \item[(i)] If $a<1$, then \[ \E\left\|\frac1d\sum_{i=1}^d y_i\otimes y_i-\E(y\otimes y) \right\|_2\leq a;\]
    \item[(ii)] For every $t\in(0,1)$, \[ \Prob\left\{\left\|\frac1d\sum_{i=1}^d y_i\otimes y_i-\E(y\otimes y)\right\|_2>t\right\} \leq 2\exp(-ct^2/a^2). \]
\end{enumerate}
\end{theorem}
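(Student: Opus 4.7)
The plan is to derive (i) first by symmetrization combined with the non-commutative Khintchine inequality, and then to obtain (ii) by a matrix concentration inequality. Throughout I write $S_d := \frac{1}{d}\sum_{i=1}^d y_i \otimes y_i$ and $\Sigma := \E(y \otimes y)$, so that the goal is to control $\|S_d - \Sigma\|_2$, and recall by assumption that $\|\Sigma\|_2 \leq 1$.

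For (i), I would introduce an independent copy $(y_i')$ of $(y_i)$ together with i.i.d.\ Rademacher signs $\eps_i$ independent of everything else. Standard symmetrization (Jensen applied to the $y_i'$'s, then inserting $\eps_i$) gives
\[
\E\|S_d - \Sigma\|_2 \;\leq\; \frac{2}{d}\,\E\Big\|\sum_{i=1}^d \eps_i\,y_i \otimes y_i\Big\|_2.
\]
The crucial tool is Rudelson's sampling lemma (a consequence of the non-commutative Khintchine inequality of Lust-Piquard and Pisier): conditionally on the $y_i$'s,
\[
\E_\eps\Big\|\sum_{i=1}^d \eps_i\,y_i \otimes y_i\Big\|_2 \;\leq\; C\sqrt{\log d}\,\max_i\|y_i\|_2\,\Big\|\sum_{i=1}^d y_i \otimes y_i\Big\|_2^{1/2}.
\]
Using $\|y_i\|_2 \leq M$ almost surely, taking the outer expectation, and pulling the square root outside via Jensen yields, with $E := \E\|S_d - \Sigma\|_2$,
\[
E \;\leq\; C' M\sqrt{\frac{\log d}{d}}\,(E + 1)^{1/2},
\]
since $\E\|S_d\|_2 \leq E + \|\Sigma\|_2 \leq E + 1$. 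Writing $a = C_0 M\sqrt{\log d/d}$ for an appropriate absolute constant $C_0$, this becomes the quadratic inequality $E \leq a\sqrt{E+1}$, whose solution satisfies $E \leq a$ whenever $a < 1$. That gives (i).

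For (ii), I would apply a matrix Bernstein-type inequality to the independent, mean-zero, self-adjoint summands $X_i := \frac{1}{d}(y_i \otimes y_i - \Sigma)$. These satisfy $\|X_i\|_2 \leq 2M^2/d$ almost surely and have matrix variance
\[
\Big\|\sum_i \E X_i^2\Big\|_2 \;\lesssim\; \frac{M^2}{d},
\]
because $\E(y \otimes y)^2 \preceq M^2\,\Sigma$ and $\|\Sigma\|_2 \leq 1$. Matrix Bernstein then delivers $\Prob\{\|S_d - \Sigma - \E(S_d - \Sigma)\|_2 > s\} \leq 2\exp(-c\,d\,s^2/M^2)$ for $s \in (0,1)$. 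Combining this with the expectation bound $E \leq a$ from (i) (which absorbs the mean into a slightly larger $t$) and using $1/a^2 = d/(C_0^2 M^2 \log d)$, a rescaling of constants gives the claimed $\Prob\{\|S_d - \Sigma\|_2 > t\} \leq 2\exp(-ct^2/a^2)$, establishing (ii).

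The main obstacle is producing the sharp $\sqrt{\log d}$ factor in the symmetrization step: this is exactly what the non-commutative Khintchine inequality supplies, and without it one only obtains a suboptimal $\sqrt{d}$-type bound and loses the stated rate entirely. Once that ingredient is accepted, the quadratic-inequality solve for $E$ and the subsequent matrix Bernstein step are essentially mechanical.
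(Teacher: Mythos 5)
First, a point of comparison: the paper does not prove this statement at all --- it is imported verbatim from Rudelson and Vershynin (their Theorem 3.1), with only the remark that their real-vector proof carries over to $\K=\CC$. So your proposal is measured against the original Rudelson--Vershynin argument rather than anything in this paper. Your treatment of part (i) is essentially that argument: symmetrization, Rudelson's lemma (via the non-commutative Khintchine inequality) to produce the $\sqrt{\log d}$ factor, Jensen, and the quadratic inequality $E\le a\sqrt{E+1}$. The only cosmetic issue is that this inequality yields $E\le\frac{1+\sqrt{5}}{2}\,a$ rather than $E\le a$ when $a$ is near $1$; this is absorbed by enlarging $C_0$, as you indicate.

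Part (ii) has a genuine gap. Matrix Bernstein applied to the $n\times n$ self-adjoint summands $X_i=\frac1d\left(y_i\otimes y_i-\Sigma\right)$ produces a tail bound carrying the dimensional prefactor $2n$, i.e. something of the form $2n\exp(-c\,d\,t^2/M^2)$. The statement to be proved is dimension-free: $2\exp(-ct^2/a^2)$ with $a^2=C_0^2M^2\log d/d$, valid for every $t\in(0,1)$ with no relation imposed among $n$, $d$, $M$, and $t$. The factor $n$ cannot be folded into the exponent unless $dt^2/M^2\gtrsim\log n$, which is not guaranteed --- and in this paper's application $n$ is the full matrix dimension while $d$ is only on the order of $k\log k$, so the loss is material. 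The original proof avoids this by treating the scalar random variable $Z=\bigl\|\frac1d\sum_i y_i\otimes y_i-\E(y\otimes y)\bigr\|_2$ directly and applying a dimension-free deviation inequality for norms of sums of independent bounded random elements of a Banach space (a Talagrand/Ledoux--Talagrand-type concentration of $Z$ about $\E Z$), then feeding in the bound $\E Z\le a$ from part (i). Your parenthetical about absorbing the mean into a slightly larger $t$ is also misplaced for matrix Bernstein, since the $X_i$ are already centered; that step belongs precisely to the concentration-around-the-mean route just described. If you replace matrix Bernstein with such a scalar concentration inequality, your remaining bookkeeping (comparing $d/M^2$ with $1/a^2$ up to the $\log d$ factor) does go through.
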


Note that Theorem \ref{THM:RVLargeNumbers} was proved in \cite{Rudelson_2007} for $y\in\R^n$; however, their proof is valid without change for complex vectors, which we need for our application \cite{RudelsonPrivate}.

\begin{proof}[Proof of Theorem \ref{THM:Stability}]
Without loss of generality, suppose that $\|A\|_2=1$.  Let $x_i$ be the rows of $A$ so that $A^*A = \sum_{i=1}^m x_i\otimes x_i$.  Define the random vector $y$ via
\[ \Prob\left(y = \frac{1}{\sqrt{\tilde{p}_i}}x_i\right) = \tilde{p}_i.\]
Note that by assumption on $\tilde{p}$, $\Prob(y=x_i) = 0$ only if $x_i=0$.  Let $y_1,\dots,y_d$ be independent copies of $y$, and let $\hat{A}$ be the matrix whose rows are $\frac{1}{\sqrt{d}}y_i$. Then we have $\hat{A}^*\hat{A} = \frac1d\sum_{i=1}^d y_i\otimes y_i$, and  $\E(y\otimes y) = A^*A$; indeed
\[\E(y\otimes y) = \sum_{i=1}^m \frac{1}{\sqrt{\tilde{p}_i}}x_i\otimes \frac{1}{\sqrt{\tilde{p}_i}}x_i\tilde{p}_i = \sum_{i=1}^m x_i\otimes x_i = A^*A.  \]
Now by assumption on $\tilde{p}$, we may choose \[\|y\|_2 = \frac{\|x_i\|_2}{\alpha_i\|x_i\|_2}\|A\|_F \leq \frac1\alpha\|A\|_F = \frac{r}{\alpha}:=:M.\]  Applying Theorem \ref{THM:RVLargeNumbers} with the assumption on $d$ yields (as in \cite{Rudelson_2007})
\[ a = \frac{1}{\alpha}C\left(\frac{\log d}{d}r\right)^\frac12 \leq \frac{\eps^2\sqrt{\delta}}{2\alpha}.\]
This quantity is thus bounded by 1 provided $\frac{\eps^2\sqrt{\delta}}{2}\leq\alpha$. In this event, Theorem \ref{THM:RVLargeNumbers} $(ii)$ implies that if $t=\frac{\eps^2}{2}$, then \[\|A^*A-\hat{A}^*\hat{A}\|_2\leq\frac{\eps^2}{2} \]
with probability at least $1-2\exp(-\frac{c\alpha^2}{\delta})$.  Note also that by the assumption on $\eps,\delta$, and $\alpha$, we have $\frac{\alpha^2}{\delta}>\frac{\eps^4}{4}$, whence the given event holds with probability at least $1-2\exp(-c\eps^4)$.
\end{proof}

\end{document}